\def\D{\displaystyle}
\newtheorem{thm}{Theorem}%[section]
\newtheorem{cor}[thm]{Corollary}
\newtheorem{lem}[thm]{Lemma}
\newtheorem{exam}[thm]{Example}
\date{}
\begin{document}
\title{Two globally convergent nonmonotone trust-region methods for unconstrained optimization}
%\title{Exploiting an efficient nonmonotone strategy in trust-region methods for unconstrained optimization}
\author{Masoud Ahookhosh \and Susan Ghaderi}

\institute{M. Ahookhosh
           \at Faculty of Mathematics, University of Vienna, Oskar-Morgenstern-Platz 1, 1090 Vienna,
            Austria\\
           \email{masoud.ahookhosh@univie.ac.at}
           \and S. Ghaderi
           \at Department of Mathematics, Faculty of Science, Razi University,
              Kermanshah, Iran \\ 
           %Department of Mathematics, Azarbaijan Shahid Madani University, Tabriz, Iran\\
           \email{susan.ghaderi23@gmail.com}                
            }
% \date{Received: date / Accepted: date}
\maketitle

%###########################################################################################################
\begin{abstract}
This paper addresses some trust-region methods equipped with nonmonotone strategies for solving nonlinear unconstrained optimization problems. More specifically, the importance of using nonmonotone techniques in nonlinear optimization is motivated, then two new nonmonotone terms are proposed, and their combinations into the traditional trust-region framework are studied. The global convergence to first- and second-order stationary points and local superlinear and quadratic convergence rates for both algorithms are established. Numerical experiments on the \textsf{CUTEst} test collection of unconstrained problems and some highly nonlinear test functions are reported, where a comparison among state-of-the-art nonmonotone trust-region methods show the efficiency of the proposed nonmonotne schemes. 

\keywords{Unconstrained optimization\and Black-box oracle\and Trust-region method\and Nonmonotone strategy\and Global convergence}
\subclass{90C30 \and 65K05}
\end{abstract}

% ################################################
% ################################################
\section{Introduction} \label{intro}
In this paper we consider the unconstrained minimization problem   
\begin{equation}\label{e.func} 
\begin{array}{ll} 
\textrm{minimize}   &  ~  f(x)\\ 
\textrm{subject to} &  ~  x\in \mathbb{R}^n,
\end{array} 
\end{equation} 
where $f:\mathbb{R}^n\rightarrow \mathbb{R}$  is a real-valued nonlinear function, which is bounded and continuously-differentiable. We suppose that first- or second-order black-box oracle of $f$ is available.\\
{\bf Motivation \& history.} Trust-region methods, also called restricted step methods \cite{Fle}, are a class of iterative schemes developed to solve convex or nonconvex  optimization problems, see, for example, \cite{ConGT}. They also developed for nonsmooth problems, see \cite{ConGT,DenLT,Yua,GraYY}. Trust-region methods have strong convergence properties, are reliable and robust in computation, and can handle ill-conditioned problems, cf. \cite{RojS,SchSB}. Let $x_k$ be the current iteration. In trust-region framework the objective $f$ is approximated by a simple model in a specific region around $x_k$ such that it is an acceptable approximation of the original objective, which is called region of trust. Afterward, the model is minimized subject to the trust-region constraint to find a new trial point $d_k$. Hence the simple model means that it can be minimized much easier than the original objective function.  If the founded model is an adequate approximation of the objective function within the trust-region, then the point $x_{k+1}=x_k+d_k$ is accepted by the trust-region method and the region can be expanded for the next iteration; conversely, if the approximation is poor,  the region is contracted and the model is minimized within the contracted region. This scheme will be continued until finding an acceptable trial step $d_k$ guaranteeing an acceptable agreement between the model and the objective function.

Several quadratic and non-quadratic models have been proposed to approximate the objective function in optimization, see \cite{Dav, FerKS, Sor1, Sun}, however, the conic and quadratic models are more popular, see \cite{DiS,Sor2}. If the approximated model is quadratic, i.e., 
\begin{equation} \label{e.qk}
q_k(d) := f_k + g_k^T d + \frac{1}{2} d^T B_k d,
\end{equation}
where $f_k = f(x_k)$, $g_k = \nabla f(x_k)$, and $B_k \approx \nabla^2 f(x_k)$, the trust-region method can be considered as a globally convergent generalization on classical Newton's method. Then the trust-region sunproblem is defined by
\begin{equation} \label{e.sub}
\begin{array}{ll}
\mathrm{minimize}   & ~ q_k(d),\\
\mathrm{subject~to} & ~ \|d\| \leq \delta_k.
\end{array}
\end{equation}
Hence the trust-region is commonly a norm ball $C$ defined by
\[
C := \{d\in \mathbb{R}^n \mid \|d\| \leq \delta_k\},
\]
where $\delta_k > 0$ is a real number called trust-region radius, and $\|\cdot\|$ is any norm in $\mathbb{R}^n$, cf. \cite{HalT}. Since $C$ is compact and the model is continuous, the trust-region subproblem attains its minimizer on the set $C$. The most computational cost of trust-region methods relates to minimizing the model over the trust-region $C$. Hence finding efficient schemes for solving (\ref{e.sub}) has received much attention during past few decades, see \cite{ErwG,ErwGG,GolLRT,MorS,Ste}. Once the step $d$ is computed, the quality of the model in the trust-region is evaluated by a ratio of the actual reduction of objective, $f_k-f(x_k+d)$, to the predicted reduction of model, $q_k(0)-q_k(d)$, i.e.,
\begin{equation} \label{e.rk}
r_k = \frac{f_k-f(x_k+d)}{q_k(0)-q_k(d)}.
\end{equation}
For a prescribed positive constant $\mu_1 \in (0, 1]$, if $r_k \geq \mu_1$, the model provides a reasonable approximation, the step is accepted, i.e., $x_{k+1}=x_k+d_k$, and the trust-region $C$ can be expanded for the next step. Otherwise, the trust-region $C$ should be contracted by decreasing the radius $\delta_k$ and the subproblem (\ref{e.sub}) is solved in the reduced region. This scheme is continued until that the step $d$ accepted by trust-region test $r_k \geq \mu_1$. Our discussion can be summarized in the following algorithm: 

\vspace{-3mm}
%%%%%%%%%%%%%%%%%%%%%%%%%%%%%%%%%%
\begin{algorithm}[h] \label{a.ttr}
\DontPrintSemicolon % Some LaTeX compilers require you to use \dontprintsemicolon instead
\KwIn{$x_0 \in \mathbb{R}^n$, $B_0 \in \mathbb{R}^{n \times n}$, $k_{max}$; $0<\mu_1 \leq \mu_2 \leq 1$, $0 < \rho_1 \leq 1 \leq \rho_2$,
$\varepsilon >0$;}
\KwOut{$x_b$; $f_b$;}
\Begin{
    $\delta_0\leftarrow \|g_0\|$;~ $k \leftarrow 0$;\;
    \While {$\|g_k\| \geq \varepsilon~~ \&~~ k \leq k_{max}$}{
        solve the subproblem (\ref{e.sub}) to specify $d_k$;\; 
        $\widehat{x}_k \leftarrow x_k + d_k$;~ compute $f(\widehat{x}_k)$;\;
        determine $r_k$ using (\ref{e.rk});\;
        \While {$r_k < \mu_1$}{ 
            $\delta_k \leftarrow \rho_1 \delta_k $;\;
            solve the subproblem (\ref{e.sub}) to specify $d_k$;\;
            $\widehat{x}_k \leftarrow x_k + d_k$; ~ compute $f(\widehat{x}_k)$\;
            determine $r_k$ using (\ref{e.rk});\;
        }
        $x_{k+1} \leftarrow \widehat{x}_k$;\;
        \If{$r_k \geq \mu_2$}{
        $\delta_{k + 1} \leftarrow \rho_2 \delta_k $;
        }
        update $B_{k+1}$;~ $k \leftarrow k+1$;\;
    }
    $x_b \leftarrow x_k$;~ $f_b \leftarrow f_k$;
}
\caption{ {\bf TTR} (traditional trust-region algorithm)}
\end{algorithm}
%%%%%%%%%%%%%%%

\vspace{-4mm}
In Algorithm 1, it follows from  $r_k \geq \mu_1$ and $q_k(0) - q_k(d_k) > 0$ that
\[
f_k - f_{k+1} \geq \mu_1 (q_k(0) - q_k(d_k)) > 0,
\]
implying $f_{k+1} \leq f_k$. This means that the sequence of function values $\{f_k\}$ is monotonically decreasing, i.e., the traditional trust-region method is also called the {\bf \emph{monotone trust-region}} method. This feature seems natural for minimization schemes, however, it slows down the convergence of TTR to a minimizer if the objective involves a curved narrow valley, see \cite{AhoA1,GriLL1}. To observe the effect of nonmonotonicity on TTR, we study the next example.

%%%%%%%%%%%%
\begin{exam} 
Consider the two-dimensional Nesterov-Chebysheve-Rosenbrock function , cf. \cite{GurO},
\begin{equation*}
f(x_1,x_2) =\frac{1}{4}( x _1 - 1 )^ 2 +( x _2 - 2x_1^2 + 1 )^2 ,
\end{equation*}
where we solve the problem (\ref{e.func}) by Newton's method and TTR with the initial point $x_0 = (-0.61, -1)$. It is clear that $(1, 1)$ is the optimizer. The implementation indicates that Newton's method needs 7 iterations and 8 function evaluations, while monotone trust-region  method needs 22 iterations and 24 function evaluations.
We depict the contour plot of the objective and iterations as well as a diagram for function values versus iteration attained by these two algorithms in Figure \ref{fcon1}. Subfigure (a) of Figure \ref{fcon1} shows that the iterations of TTR follow the bottom of the valley in contrast to those for Newton's method that can go up and down to reach the $\varepsilon$-solution with the accuracy parameter $\varepsilon = 10^{-5}$. We see that Newton's method attains larger step compared with those of TTR. Subfigure (b) of Figure \ref{fcon1} illustrates function values versus iterations for both algorithms showing that the related function values of TTR decreases monotonically, while it is fluctuated nonmonotonically for Newton's method.
\begin{figure}[h] 
\centering 
\subfloat[][Nes-Cheb-Rosen contour plot \& iterations]{\includegraphics[width=7.7cm]{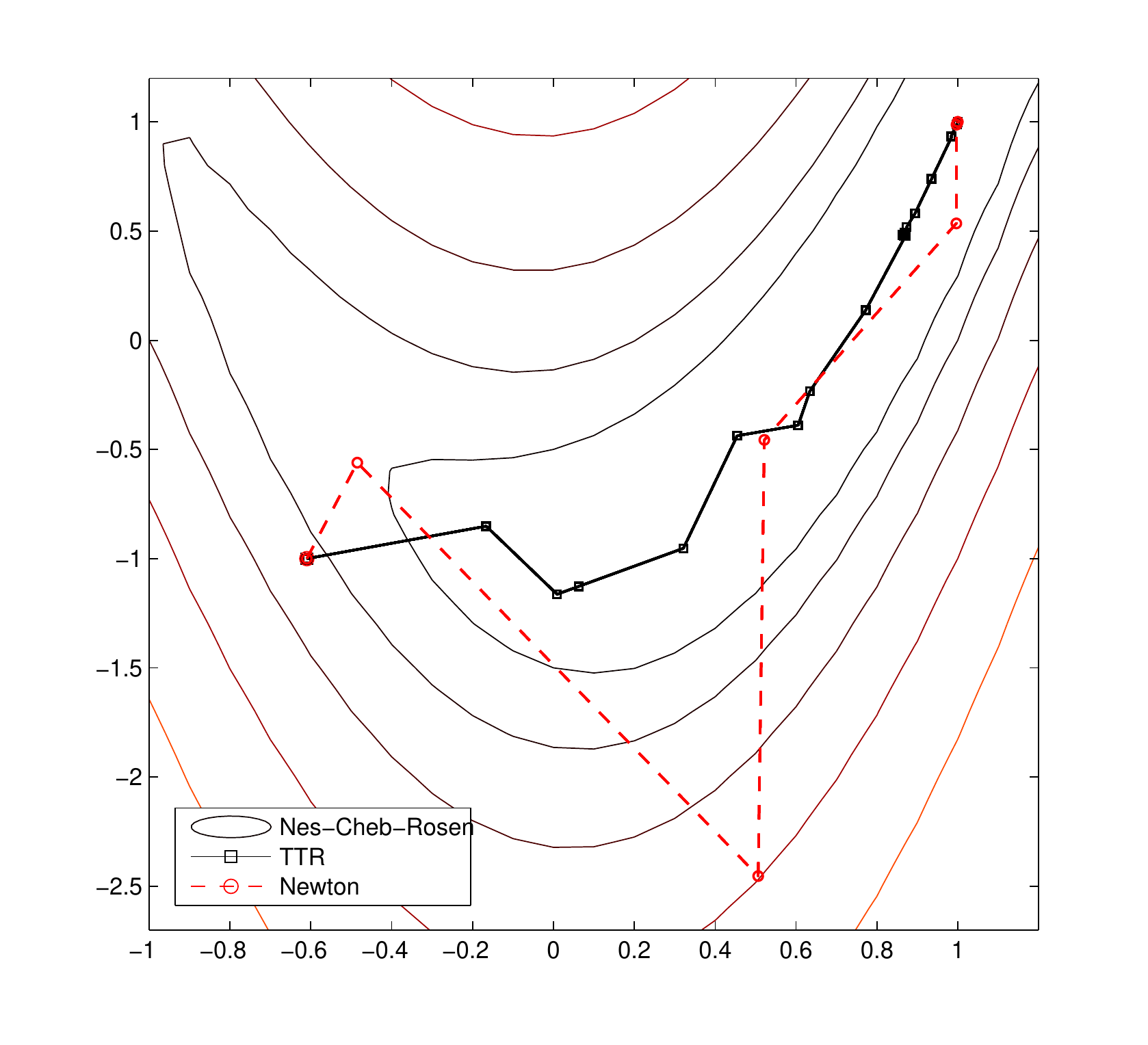}}% 
\qquad 
\subfloat[][function values versus iterations]{\includegraphics[width=7.7cm]{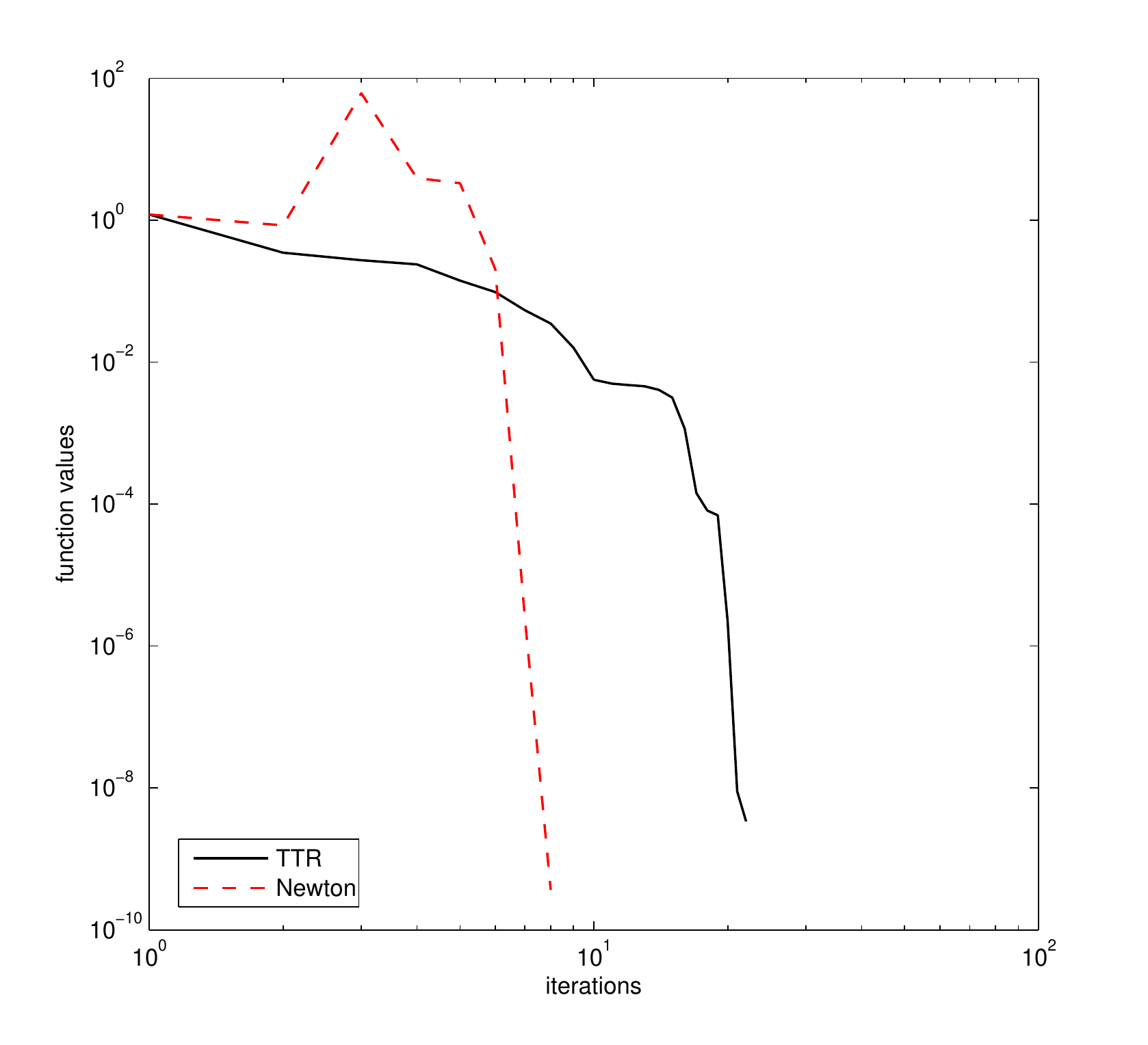}} 
\caption{A comparison between Newton's method and TTR: Subfigure (a) illustrates the contour plot of the two-dimensional Nesterov-Chebysheve-Rosenbrock function and iterations of Newton and TTR; Subfigure (b) shows the diagram of function values versus iterations.} 
\label{fcon1} 
\end{figure} 
\end{exam}
%%%%%%%%%% 
 
In general the monotonicity may result to the slow iterative schemes for highly nonlinear or badly-scaled problems. To avoiding this algorithmic limitation, the idea of nonmonotone strategies has been proposed traced back to the watch-dog technique to overcome the Martos effect for constrained optimization \cite{ChaPLP}. To improve the performance of Armijo's line search, {\sc Grippo} et al. in 1986 \cite{GriLL1} proposed the modified Armijo's rule 
\[
f(x_k + \alpha_k d_k) \leq f_{l(k)} +  \sigma \alpha_k g_k^T d_k,~~ k= 0,1,2,\cdots,
\]
with the step-size $\alpha_{k} >0$, $\sigma \in (0, 1/2)$, and
\begin{equation}\label{e.grip}
f_{l(k)} = \max_{0 \leq j \leq m(k)} \{f_{k-j}\},
\end{equation}
where $m(0)=0$, $m(k)\leq \min \{m(k-1)+1, N\} $ for nonnegetive integer $N$. It was shown that the associated scheme is globally convergent, and numerical results reported in {\sc Grippo} et al. \cite{GriLL1} and {\sc Toint} \cite{Toi1} showed the effectiveness of the proposed idea. Motivated by these results, the nonmonotone strategies has received much attention during past few decades. For example, in 2004, {\sc Zhang \& Hager} in \cite{ZhaH} proposed the nonmonotone term  
\begin{equation}\label{Hag}
\begin{array}{l}
  C_{k}=\left\{
  \begin{array}{ll}
    f_0 &~~ \mathrm{if}~ k=0, \\
    (\eta_{k-1}Q_{k-1}C_{k-1}+f(x_k))/Q_k&~~ \mathrm{if}~ k \geq 1, 
  \end{array}  \right. 
~~~
  Q_{k}=\left\{%
  \begin{array}{ll}
    1 & ~~\mathrm{if}~ k=0, \\
    \eta_{k-1}Q_{k-1}+1 &~~ \mathrm{if}~ k \geq 1, 
  \end{array} \right. 
\end{array}
\end{equation}
where $0 \leq \eta_{min} \leq \eta_{k-1} \leq \eta_{max} \leq 1$. Recently, {\sc Mo} et al. in \cite{MoLY} and {\sc Ahookhosh} et al. in \cite{AhoAB} studied the nonmonotone term
\begin{equation}\label{e.tk1} 
 D_k  = \left\{ 
\begin{array}{ll} 
 f_k     &~~ \mathrm{if}~ k = 1,\\ 
 \eta_{k}D_{k-1} +(1-\eta_{k})f_k &~~ \mathrm{if}~ k \geq 2,
\end{array} \right .
\end{equation}
where $\eta_k \in [ \eta_{min} ,\eta_{max}] $, $ \eta_{min} \in [0,1] $, $ \eta_{max} \in [\eta_{min},1] $. More recently, {\sc Amini} et al. in \cite{AmiAN} proposed the nonmonotone term 
\begin{equation}\label{e.Ami}
R_k=\eta_k f_{l(k)}+(1-\eta_k)f_k,\end{equation}
where $0\leq\eta_{min}\leq \eta_{max}\leq1$ and
$\eta_{k}\in[\eta_{min},\eta_{max}]$. In all cases it was proved that the schemes are globally convergent and enjoy the better performance compared with monotone ones. 

At the same importance of using monmonotone strategies for inexact line search techniques, the combination of trust-region methods with nonmonotone strategies is interesting. Historically, the first nonmonotone trust-region method was proposed in 1993 by {\sc Deng} et al. in \cite{DenXZ} for unconstrained optimization. Under some classical assumptions, the global convergence and the local superlinear convergence rate were established. Nonmonotone trust-region methods were also studied by several authors such as {\sc Toint} \cite{Toi2}, {\sc Xiao \& Zho} \cite{XiaZ}, {\sc Xiao \& Chu} \cite{XiaC}, {\sc Zhou \& Xiao} \cite{ZhoX}, {\sc Ahookhosh \& Amini} \cite{AhoA2}, {\sc Amini \& Ahookhosh} \cite{AmiA}, and {\sc Mo} et al. \cite{MoLY}. Recently, {\sc Ahookhosh \& Amini} in \cite{AhoA1} and {\sc Ahookhosh} et al. in \cite{AhoAP} proposed two nonmonotone trust-region methods using the nonmonotone term (\ref{e.Ami}). Theoretical results were reported, and numerical results  showed the efficiency of the proposed nonmonotone methods.\\\\
{\bf Content.} In this paper we propose a trust-region method equipped with two novel nonmonotone terms. More precisely, we first establish two nonmonotone terms and then combine them with Algorithm 1 to construct two nonmonotone trust-region algorithms. If $k \geq N$, the new nonmonotone terms are defined by a convex combination of the last $N$ successful function values, and if $k < N$, either a convex combination of $k$ successful function values or $f_{l(k)}$ is used. The global convergence to first- and second-order stationary points is established on some classical assumptions. Moreover, local superlinear and quadratic convergence rates for the proposed methods are studied. Numerical results regarding experiments on some highly nonlinear problems and on 112 unconstrained test problems from the \textsf{CUTEst} test collection \cite{GolOT} are reported indicating the efficiency of the proposed nonmonotone terms.

The remainder of paper is organized as follow. In Section 2 we propose new nonmonotone terms and their combination with the trust-region framework. The global convergence of the proposed methods are given in Section 3. Numerical results are reported in Section 4. Finally, some conclusions are given in Section 5.

% ################################################
% ################################################
\section {Novel nonmonotone terms and algorithm} \label{sec1}
In this section we first present two novel nonmonotone
terms and then combine them into trust-region framework to
introduce two nonmonotone trust-region algorithms
for solving the unconstrained optimization problem (\ref{e.func}).
 
We first assume that $k$ denotes the current iteration and  $N \in \mathbb{N}$ is a constant. The main idea is to construct a nonmonotone term determined by a convex combination of the last $k$ successful function values if $k < N$ and by a convex combination of the last $N$ successful function values if $k \geq N$. In the other words, we construct new terms using function values collected in the set 
\begin{equation} 
 \mathcal{F}_k := \left\{ 
\begin{array}{ll} 
\{f_0,f_1, \cdots, f_k\}             &~~\mathrm{if}~ k < N,\\ 
\{f_{k-N+1},f_{k-N+2}, \cdots, f_k\} &~~\mathrm{if}~ k \geq N,
\end{array} \right.
\end{equation} 
which should be updated in each iteration. To this end, 
motivated by the term (\ref{e.dk}), we construct $\overline{T}_k$ using the subsequent procedure    
\begin{equation*} 
\left\{ 
\begin{array}{lll} 
\overline{T}_0     & = f_0                                                                &  ~~\mathrm{if}~k = 0,\\ 
\overline{T}_1     & = (1-\eta_{0})f_1 + \eta_{0} f_0                                     &  ~~\mathrm{if}~k = 1,\\ 
\overline{T}_2     & = (1-\eta_{1})f_2 + \eta_{1}(1-\eta_{0}) f_1 + \eta_{1}\eta_{0} f_0  &  ~~\mathrm{if}~k = 2,\\ 
\vdots            & \vdots                                                                &  ~~  \vdots\\ 
\overline{T}_{N-1} & = (1-\eta_{N - 2})f_{N - 1} + \eta_{N - 2}(1-\eta_{N - 3}) f_{N-2}  
             + \cdots + \eta_{N - 2} \cdots \eta_{0} f_0                                   &  ~~\mathrm{if}~k = N  - 1,\\ 
\overline{T}_N     & = (1-\eta_{N - 1})f_N + \eta_{N - 1}(1-\eta_{N - 2}) f_{N-1}  
            +  \cdots  +  \eta_{N  -  1} \cdots \eta_{0} f_0                               &  ~~\mathrm{if}~k = N,
\end{array} 
\right .
\end{equation*} 
where $\eta_i \in [0,1)$, for $i = 1 ,  2 ,  \cdots ,  N$, are some weight parameters. Hence the new term is generated by
\begin{equation}\label{e.tk} 
\overline{T}_k  : = \left\{ 
\begin{array}{ll} 
 (1-\eta_{k-1}) f_k  +  \eta_{k-1} \overline{T}_{k-1}       &~~ \mathrm{if}~ k < N,\\ 
 (1-\eta_{k-1})~f_k  +  \eta_{k-1}(1-\eta_{k-2}) ~ f_{k-1}  
+  \cdots + \eta_{k-1} \cdots \eta_{k-N} ~  f_{k-N}        &~~ \mathrm{if}~ k \geq N,
\end{array} \right.
\end{equation} 
where $ \overline{T}_0 = f_0$ and $\eta_i \in [0,1)$ for $i = 1, 2 , \cdots ,  k$. To show that $\overline{T}_k$ is a convex combination of the collected function values $\mathcal{F}_k$, it is enough to show that the summation of multipliers are equal to unity. For $k \geq N$, the definition for $\overline{T}_k$ implies  
\begin{equation}\label{e.conv} 
 (1-\eta_{k  -  1})  +  \eta_{k  -  1}(1-\eta_{k  -  2})  +  \cdots  +  \eta_{k  -  1} \cdots \eta_{k  -  N  -  1} (1-\eta_{k  -  N}) +  \eta_{k  -  1} \cdots \eta_{k  -  N} = 1 
\end{equation} 
For $k < N$, a similar summation of the last $k$ multipliers is equal to one. Therefore, the generated term $\overline{T}_k$ is a convex combination of the elements of $\mathcal{F}_k$.

The procedure of defining $\overline{T}_k$ clearly implies that the set $\mathcal{F}_k$ should be updated and saved in each iteration. Moreover, $N(N  +  1)/2$ multiplications is required to compute $ \overline{T}_k$. To avoid saving $\mathcal{F}_k$ and decrease the number of multiplications, we derive a recursive formula for (\ref{e.tk}). From the definition of $ \overline{T}_k$, for $k \geq N$, it follows that 
\begin{equation*} 
\begin{split} 
 \overline{T}_k  -  \eta_{k  -  1}  \overline{T}_{k  -  1} & = (1-\eta_{k  -  1})~f_k  +  \eta_{k  -  1}(1-\eta_{k  -  2}) ~  f_{k  -  1}  
+  \cdots  +  \eta_{k  -  1} \cdots \eta_{k  -  N} ~  f_{k  -  N}\\ 
 & -  \eta_{k  -  1} (1-\eta_{k  -  2})~f_{k  -  1}  -  \cdots  -  \eta_{k  -  1} \cdots (1-\eta_{k  -  N  -1})~  f_{k  -  N}  -  \eta_{k  -  1} \eta_{k  -  2} \cdots \eta_{k  -  N  -  1} ~  f_{k  -  N  - 1}\\ 
 &= (1-\eta_{k  -  1})~f_k  +  \eta_{k  -  1} \eta_{k  -  2} \cdots \eta_{k  -  N  -  1} ~  (f_{k  -  N}  -  f_{k  -  N  - 1})\\ 
 &= (1-\eta_{k  -  1})~f_k  +  \xi_k ~  (f_{k  -  N}  -  f_{k  -  N  - 1}) 
\end{split} 
\end{equation*} 
 where  
 $\xi_k  : = \eta_{k  -  1} \eta_{k  -  2} \cdots \eta_{k  -  N  -  1}$ . For $k \geq N$, this equation leads to
\begin{equation}\label{e.tk0} 
 \overline{T}_k = (1-\eta_{k  -  1})~f_k  +  \eta_{k  -  1}  \overline{T}_{k  -  1}   +\xi_k~   (f_{k  -  N}  -  f_{k  -  N  -1}),
\end{equation} 
which requirs to save only $f_{k-N}$ and $f_{k-N-1}$ and only needs three multiplications. Moreover, the definition of $\xi_k$ implies  
\begin{equation}\label{e.xik} 
 \xi_k = \eta_{k  -  1} \eta_{k  -  2} \cdots \eta_{k  -  N  -  1} = \frac{\eta_{k  -  1}}{\eta_{k  -  N  -  2}} \eta_{k  -  2} \eta_{k  -  3} \cdots \eta_{k  -  N  -  2} = \frac{\eta_{k  -  1}}{\eta_{k  -  N  -  2}} \xi_{k-1} .
\end{equation} 
If $\xi_k$ is recursively updated by (\ref{e.xik}), (\ref{e.tk}), and (\ref{e.tk0}), a new nonmonotone term is defined by 
\begin{equation}\label{e.tk1} 
 T_k  : = \left\{ 
\begin{array}{ll} 
 f_k  +  \eta_{k  -  1} ( \overline{T}_k  -  f_k)   &~~ \mathrm{if}~ k < N,\\ 
 \max \left\{\overline{T}_{k} ,  f_k \right\}   &~~ \mathrm{if}~ k \geq N,
\end{array} \right.
\end{equation} 
where the max term is added to guarantee $T_k \geq f_k$ .

As discussed in Section 1, nonmonotone schemes perform better when they use stronger nonmonotone terms far away from the optimizer and weaker one close to it. This  motivate us to consider a new version of the derived nonmonotone term by using $f_{l(k)}$ in cases that $k < N$. More precisely, the second nonmonotone term is defined by   
 \begin{equation}\label{e.tk2} 
 T_k = \left\{ 
  \begin{array}{ll} 
f_{l(k)} &~~ \mathrm{if}~ k < N,\\ 
\max \left\{\overline{T}_{k},  f_k \right\}  &~~ \mathrm{if}~ k \geq N,
 \end{array} \right .
 \end{equation} 
where $\xi_k$ is defined by (\ref{e.xik}). It is clear that the new term uses a stronger term $f_{l(k)}$ defined by (\ref{e.grip}) for first $k < N$ iterations and then employs the relaxed convex term proposed above. 

Now, to employ the proposed nonmonotone terms in the trust-region framework, it is enough to replace the ratio $r_k$ (\ref{e.rk}) by the nonmonotone ratio
\begin{equation} \label{e.rkhat}
\widehat{r}_k = \frac{T_k-f(x_k+d)}{q_k(0)-q_k(d)},
\end{equation}
where $T_k$ is defined by (\ref{e.tk1}) or (\ref{e.tk2}).
Hence in trust-region framework we replace (\ref{e.rk})
by (\ref{e.rkhat}). 
Notice that if $\widehat{r}_k \geq \mu_1$, the,
\[
T_k-f_{k+1} \geq \mu_1(q_k(0)-q_k(d_k)) \geq 0.
\]
This implies that $f_{k+1}$ can be larger than $f_k$, however, the elements of $\{f_k\}$ cannot arbitrarily increase, and the maximum increase is controlled by the nonmonotone term $T_k$. Moreover, the definitions (\ref{e.tk1}) and (\ref{e.tk2}) imply that $\widehat{r}_k \geq r_k$ increasing the possibility of attaining larger steps for nonmonotone schemes compared with monotone ones. 

The above-mentioned discussion leads to the following nonmonotone trust-region algorithm:

\vspace{6mm}
%%%%%%%%%%%%%%%%%%%%%%%%%%%%%%%%%
\begin{algorithm}[H] \label{NTTR}
\DontPrintSemicolon % Some LaTeX compilers require you to use \dontprintsemicolon instead
\KwIn{$x_0 \in \mathbb{R}^n$, $B_0 \in \mathbb{R}^{n \times n}$, $k_{max}$; $0<\mu_1 \leq \mu_2 \geq 1$, $0 < \rho_1 \leq 1 \leq \rho_2 \geq 1$,
$\varepsilon >0$;}
\KwOut{$x_b$; $f_b$;}
\Begin{
    $\delta_0\leftarrow \|g_0\|$;~ $k \leftarrow 0$;\;
    \While {$\|g_k\| \geq \varepsilon~~ \&~~ k \leq k_{max}$}{
        solve the subproblem (\ref{e.sub}) to specify $d_k$;\; 
        $\widehat{x}_k \leftarrow x_k + d_k$;~ compute $f(\hat{x}_k)$;\;
        determine $\widehat{r}_k$ using (\ref{e.rkhat});\;
        if {$\widehat{r}_k < \mu_1$}{ 
        \While {$\widehat{r}_k < \mu_1$}{ 
            $\delta_k \leftarrow \rho_1 \delta_k $;\;
            solve the subproblem (\ref{e.sub}) to specify $d_k$;\;
            $\widehat{x}_k \leftarrow x_k + d_k$; ~ compute $f(\hat{x}_k)$\;
            determine $\widehat{r}_k$ using (\ref{e.rkhat});\;
        }}
        $x_{k+1} \leftarrow \widehat{x}_k$;\;
        \If{$\widehat{r}_k \geq \mu_2$}{
        $\delta_{k + 1} \leftarrow \rho_2 \delta_k $;
        }
        update $B_{k+1}$;~ $k \leftarrow k+1$;\;
        update $T_{k+1}$;\;
        update $\eta_{k+1}$;\;
    }
    $x_b \leftarrow x_k$;~ $f_b \leftarrow f_k$;
}
\caption{ {\bf NMTR} (nonmonotone traditional trust-region algorithm)}
\label{algo:max}
\end{algorithm}
%%%%%%%%%%%%%%%

\vspace{5mm}
In Algorithm 2, if $\widehat{r}_k \geq \mu_1$ (Line 7), it is called a
{\bf \emph{successful}} iteration and if $\widehat{r}_k \geq \mu_2$ 
(Line 14), it is called a {\bf \emph{very successful}} iteration. In addition, in the algorithm, the loop started from Line 3 to Line 20 is called the {\bf \emph{outer cycle}}, and the loop started from Line 7 to Line 12 is called the {\bf \emph{inner cycle}}.

% ################################################
% ################################################
\section {Convergence analysis}
This section concerns with the global convergence 
to first- and second-order stationary points of the 
sequence $\{x_k\}$ generated by Algorithm 2. More
precisely, we intend to prove that all limit point 
$x^*$ of the sequence $\{x_k\}$ satisfy the 
condition $g(x^*)=0$, and there exists a point $x^*$ satisfying $g(x^*)=0$ where $H(x^*)$ is positive semidefinite. Furthermore, we show that Algorithm 2 is well-defined, which means that the inner cycle of the algorithm will be leaved after a finite number internal iterations, and then prove its global convergence. Moreover, local superlinear and quadratic convergence rates are investigated under some classical assumptions.

To prove the global convergence of the sequence $\{x_k\}$
generated by Algorithm 2, we require to make the following
assumptions:
\\\\
{\bf (H1)} The objective function $f$ is continuously 
differentiable and has a lower bound on the upper level
set $L(x_0) = \{x \in \mathbb{R}^n \mid f(x) \leq f(x_0)\}$.\\
{\bf (H2)} The sequence $\{B_k\}$ is uniformly bounded, i.e., 
there exists a constant $M>0$ such that
\[
\|B_k\| \leq M,
\]
for all $k \in \mathbb{N}$.\\
{\bf (H3)} There exists a constant $c>0$ such that the trial
step $d_k$ satisfies $\|d_k\| \leq c\|g_k\|$.\\

We also assume that the decrease on the model $q_k$ is at 
least as much as a fraction of the decrease obtained by the Cauchy point guaranteeing that there exists a constant $\beta\in(0,1)$ such 
that
\begin{equation}\label{e.ins}
q_k(0)-q_k(d_k)\geq \beta \|g_k\|~ \min\left\{\delta_k,
\frac{\|g_k\|}{\|B_k\|} \right\},
\end{equation}
for all $k$. This condition is called the sufficient 
reduction condition. Inequality (\ref{e.ins}) implies that 
$d_k \neq 0$ whenever $g_k \neq 0$. It is noticeable that 
there are several schemes that can solve the the trust-region
subproblem (\ref{e.sub}) such that (\ref{e.ine}) is valid,
see, for example, \cite{ConGT, NocW}.  

%%%%%%%%%%%%%%%%%%%%%%%%%
\begin{lem} \label{l.dif}
Suppose that sequence $\{x_k\}$ is generated by Algorithm 2, 
then 
\begin{equation*}
|f_k-f(x_k+d_k)-(q_k(0)-q_k(d_k))|\leq O(\|d_k\|^2).
\end{equation*}
\end{lem}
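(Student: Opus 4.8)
The plan is to compute the difference directly and then control it by a Taylor expansion of $f$ about $x_k$. First I would substitute the definition of the quadratic model, which gives $q_k(0)-q_k(d_k) = -g_k^T d_k - \frac{1}{2} d_k^T B_k d_k$, so that the quantity inside the absolute value becomes
\[
f_k - f(x_k+d_k) + g_k^T d_k + \frac{1}{2} d_k^T B_k d_k.
\]

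Next I would expand $f(x_k+d_k)$ around $x_k$. Under the differentiability hypothesis (H1), the integral form of the mean value theorem gives
\[
f(x_k+d_k) = f_k + \int_0^1 \nabla f(x_k + t d_k)^T d_k \, dt = f_k + g_k^T d_k + \int_0^1 \left[\nabla f(x_k + t d_k) - g_k\right]^T d_k \, dt.
\]
Substituting this into the expression above cancels the $f_k$ and $g_k^T d_k$ terms and leaves
\[
-\int_0^1 \left[\nabla f(x_k + t d_k) - g_k\right]^T d_k \, dt + \frac{1}{2} d_k^T B_k d_k.
\]

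The remaining task is to bound these two pieces. For the quadratic piece, the Cauchy--Schwarz inequality together with the uniform bound (H2) yields $\frac{1}{2}\left|d_k^T B_k d_k\right| \leq \frac{M}{2}\|d_k\|^2$. For the integral piece, I would invoke Lipschitz continuity of the gradient on the level set $L(x_0)$ (which holds whenever $f$ is twice continuously differentiable, i.e.\ in the second-order oracle setting, or may be imposed as a standard hypothesis) to get $\|\nabla f(x_k + t d_k) - g_k\| \leq L t \|d_k\|$; integrating $L t \|d_k\|^2$ over $t \in [0,1]$ produces $\frac{L}{2}\|d_k\|^2$. Adding the two estimates gives
\[
\left|f_k - f(x_k+d_k) - (q_k(0)-q_k(d_k))\right| \leq \frac{L+M}{2}\|d_k\|^2 = O(\|d_k\|^2),
\]
which is the desired conclusion.

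The main obstacle I anticipate is the smoothness requirement: the $O(\|d_k\|^2)$ rate cannot follow from $C^1$ regularity alone, so the argument genuinely relies on having a Lipschitz-continuous gradient (equivalently, a bounded Hessian) on the region visited by the iterates. Because every iterate stays in $L(x_0)$ by the acceptance mechanism and $\|d_k\| \leq c\|g_k\|$ by (H3), it is enough that the Hessian be bounded on this set; once that regularity is made explicit, the remaining estimates are routine and the nonmonotone acceptance rule plays no role in this particular bound.
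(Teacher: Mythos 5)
Your proof is correct, but note that the paper does not actually prove this lemma at all: its ``proof'' is a one-line citation to Chamberlain et al.\ \cite{ChaPLP}. Your direct argument --- writing $q_k(0)-q_k(d_k)=-g_k^Td_k-\tfrac{1}{2}d_k^TB_kd_k$, expanding $f(x_k+d_k)$ via the integral form of the mean value theorem so that $f_k$ and $g_k^Td_k$ cancel, and then bounding the two residual pieces by $\tfrac{M}{2}\|d_k\|^2$ (using (H2)) and $\tfrac{L}{2}\|d_k\|^2$ (using a Lipschitz gradient) --- is the standard self-contained derivation of this estimate, so in effect you have supplied the argument the citation hides. Your closing observation is the substantive point of comparison: the $O(\|d_k\|^2)$ rate genuinely cannot follow from (H1)--(H3) as literally stated, since (H1) only requires $C^1$ regularity, so Lipschitz continuity of $\nabla f$ (equivalently, a bounded Hessian) on the region visited by the algorithm is an unstated hypothesis that the paper absorbs by outsourcing the proof; making it explicit, as you do, is an improvement rather than a defect. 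One small refinement to your last paragraph: boundedness of the Hessian on $L(x_0)$ alone is not quite enough, because the segments $[x_k,x_k+d_k]$ --- including those ending at rejected trial points generated in the inner cycle --- need not stay inside the level set; the clean hypothesis is Lipschitz continuity of $\nabla f$ on an open bounded convex set $\Omega$ containing $L(x_0)$ and all such trial segments, which is precisely the device the paper itself employs later when it derives (\ref{e.lip}) ahead of Theorem \ref{t.quad}.
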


%%%%%%%%%%%%%
\begin{proof} 
The proof can be found in \cite{ChaPLP}. \qed 
\end{proof}
%%%%%%%%%%%

%%%%%%%%%%%%%%%%%%%%%%%%
\begin{lem} \label{lem1}  
Suppose that the sequence $\{x_{k}\}$ is generated by Algorithm 1, 
then we get 
\begin{equation}\label{e.ine}  
 f_{k}\leq T_{k}\leq f_{l(k)},
\end{equation} 
for all $k \in \mathbb{N} \cup \{0\}$.
\end{lem}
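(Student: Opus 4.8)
The plan is to treat the two inequalities $f_k\le T_k$ and $T_k\le f_{l(k)}$ separately, and in each to exploit the structural fact, established through (\ref{e.conv}), that $\overline{T}_k$ is a convex combination of consecutive function values $f_{k-j}$ whose indices all lie inside the window $\{f_{k-m(k)},\dots,f_k\}$ defining $f_{l(k)}$ in (\ref{e.grip}). Since each such $f_{k-j}\le f_{l(k)}$ by the very definition of the maximum, convexity immediately yields $\overline{T}_k\le f_{l(k)}$. I would record this as the single observation on which the whole upper bound rests.

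For the upper bound $T_k\le f_{l(k)}$ I would then run through the branches of (\ref{e.tk1}) and (\ref{e.tk2}). When $k\ge N$, $T_k=\max\{\overline{T}_k,f_k\}$ and both arguments are bounded by $f_{l(k)}$ (the first by the observation above, the second because $f_k$ is the $j=0$ term of the maximum). When $k<N$, either $T_k=f_{l(k)}$ in (\ref{e.tk2}), which is trivial, or $T_k=(1-\eta_{k-1})f_k+\eta_{k-1}\overline{T}_k$ in (\ref{e.tk1}), a convex combination of $f_k$ and $\overline{T}_k$ with $\eta_{k-1}\in[0,1)$, hence again $\le f_{l(k)}$.

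The lower bound $f_k\le T_k$ is immediate in every branch but one: for $k\ge N$ the maximum gives $T_k\ge f_k$ directly, and for $k<N$ the term (\ref{e.tk2}) gives $T_k=f_{l(k)}\ge f_k$. The only substantive case is the convex-combination branch of (\ref{e.tk1}) with $k<N$, where $T_k-f_k=\eta_{k-1}(\overline{T}_k-f_k)$ and I must show $\overline{T}_k\ge f_k$. I would prove this by induction on $k$ with hypothesis $\overline{T}_{k-1}\ge f_{k-1}$. The base case $\overline{T}_0=f_0$ is an equality. For the step, the hypothesis makes the convex combination $T_{k-1}=(1-\eta_{k-2})f_{k-1}+\eta_{k-2}\overline{T}_{k-1}$ satisfy $T_{k-1}\le\overline{T}_{k-1}$; since $x_k$ is accepted by the nonmonotone ratio test (\ref{e.rkhat}), so that $f_k\le T_{k-1}$, I obtain $f_k\le\overline{T}_{k-1}$, and then the recursion $\overline{T}_k-f_k=\eta_{k-1}(\overline{T}_{k-1}-f_k)\ge0$ closes the induction.

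The main obstacle is exactly this last lower-bound case: unlike the other branches it is not a pure convexity statement and must be coupled with the acceptance mechanism of the algorithm through the chain $f_k\le T_{k-1}\le\overline{T}_{k-1}$. A secondary point requiring care is the index bookkeeping—one must confirm that the window of function values entering $\overline{T}_k$ is contained in the window defining $f_{l(k)}$ (which holds for the standard update $m(k)=\min\{m(k-1)+1,N\}$); were this containment to fail, the clean bound $\overline{T}_k\le f_{l(k)}$ would break down.
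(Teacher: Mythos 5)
Your proposal is correct, and for most of its length it coincides with the paper's own argument: the upper bound $T_k\le f_{l(k)}$ is obtained in both cases from the fact (\ref{e.conv}) that the multipliers in $\overline{T}_k$ sum to one together with $f_i\le f_{l(k)}$ for the indices in the window, and the lower bound is immediate from the max in (\ref{e.tk1})--(\ref{e.tk2}) when $k\ge N$ and from $T_k=f_{l(k)}$ in the branch (\ref{e.tk2}) when $k<N$. Where you genuinely depart from the paper is the one substantive case you correctly isolate, namely $f_k\le T_k$ for $k<N$ under (\ref{e.tk1}): the paper disposes of it by citing Lemma 2.1 of \cite{AhoAB}, whereas you prove it from scratch by induction, using $T_{k-1}\le\overline{T}_{k-1}$ and the acceptance test $\widehat{r}_{k-1}\ge\mu_1$ (hence $f_k\le T_{k-1}$) to close the recursion $\overline{T}_k-f_k=\eta_{k-1}\bigl(\overline{T}_{k-1}-f_k\bigr)\ge 0$. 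Your version is more self-contained and makes explicit something the citation hides: this inequality is not a pure convexity fact but really does depend on the algorithm's acceptance mechanism (if $f_k$ could exceed $T_{k-1}$, the bound would fail), which also explains why the lemma is stated for iterates generated by the algorithm rather than for arbitrary sequences. Your closing caveat about index bookkeeping is likewise apt: the paper writes $m(k)\le\min\{m(k-1)+1,N\}$ but its proofs (here and in Lemma \ref{e.flk}) implicitly use the equality version $m(k)=\min\{k,N\}$, without which the window of $\overline{T}_k$ need not be contained in that of $f_{l(k)}$ and the bound $\overline{T}_k\le f_{l(k)}$ could indeed break; flagging this matches, and slightly sharpens, the paper's implicit convention.
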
 

%%%%%%%%%%%%% 
\begin{proof} 
For $k \leq N$, we consider two cases: (i) $T_k$ 
is defined by (\ref{e.tk1}); (ii) $T_k$ is defined by (\ref{e.tk2}). 
In Case (i) Lemma 2.1 in \cite{AhoAB}, $f_i \leq f_{l(k)}$, for  
$i = 0, 1, \cdots k $, and the fact that summation of multipliers 
in $T_k$ equal to one give the result. Case (ii) is evident from 
(\ref{e.tk2}).

For $k \geq N$, if $T_k = f_k$, the result is evident. 
Otherwise, since
\begin{equation}\label{e.conv} 
 (1-\eta_{k  -  1})  +  \eta_{k  -  1}(1-\eta_{k  -  2})  +  
 \cdots  +  \eta_{k  -  1} \cdots \eta_{k  -  N  -  1} 
 (1-\eta_{k  -  N}) +  \eta_{k  -  1} \cdots \eta_{k  -  N} = 1,
\end{equation} 
the fact that $f_i \leq f_{l(k)}$, for $i = k - N + 1, \cdots, k$, 
and (\ref{e.tk}) imply 
 \[ 
 \begin{split} 
 f_k \leq T_k &= (1-\eta_{k  -  1})~f_k  +  \eta_{k  -  1}
 (1-\eta_{k  -  2}) ~  f_{k-1}  
+  \cdots  +  \eta_{k  -  1} \cdots \eta_{k  -  N} ~  
f_{k  -  N}\\ 
 & \leq [(1-\eta_{k  -  1})  +  \eta_{k  -  1}
 (1-\eta_{k  -  2})  
+  \cdots  +  \eta_{k  -  1} \cdots \eta_{k  -  N}] 
f_{l(k)} = f_{l(k)},
 \end{split} 
 \] 
giving the result. \qed
\end{proof}
%%%%%%%%%%%

%%%%%%%%%%%%%%%%%%%%%%%%%
\begin{lem} \label{e.flk}
Suppose that sequence $\{x_k\}$ is generated by Algorithm 2, 
then the sequence $\{f_{l(k)}\}$ is decreasing.
\end{lem}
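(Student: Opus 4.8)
The plan is to prove the claim in its non-increasing form, i.e. to show $f_{l(k+1)} \le f_{l(k)}$ for every $k$. The whole argument reduces to a single pointwise inequality, namely that each newly accepted function value is dominated by the current reference value, $f_{k+1} \le f_{l(k)}$. Once this is available, the monotonicity of the reference maxima follows from a routine index manipulation that exploits the window recursion $m(k+1) \le m(k)+1$ built into the definition (\ref{e.grip}).

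First I would establish the pointwise bound $f_{k+1} \le f_{l(k)}$. A trial point is accepted by Algorithm 2 only when $\widehat r_k \ge \mu_1$; combining this with the definition (\ref{e.rkhat}) of the nonmonotone ratio and the positivity of the predicted reduction yields $T_k - f_{k+1} \ge \mu_1\big(q_k(0) - q_k(d_k)\big) \ge 0$, hence $f_{k+1} \le T_k$. Lemma \ref{lem1} then supplies $T_k \le f_{l(k)}$, so that $f_{k+1} \le f_{l(k)}$. A point worth recording here is that $x_{k+1}$ is set only after the inner cycle terminates with an accepted step, so the acceptance inequality does apply to $f_{k+1}=f(x_{k+1})$.

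Second, I would unfold the definition of $f_{l(k+1)}$ and enlarge the averaging window using $m(k+1) \le m(k)+1$:
\[
f_{l(k+1)} = \max_{0 \le j \le m(k+1)} f_{k+1-j} \le \max_{0 \le j \le m(k)+1} f_{k+1-j}.
\]
Isolating the index $j=0$, which contributes $f_{k+1}$, and applying the shift $i=j-1$ to the remaining indices $j=1,\dots,m(k)+1$, which then range over exactly the window $\{f_{k-i}: 0 \le i \le m(k)\}$ defining $f_{l(k)}$, gives
\[
f_{l(k+1)} \le \max\{ f_{k+1},\, f_{l(k)} \} = f_{l(k)},
\]
the last equality being precisely the pointwise bound from the first step.

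The step I expect to demand the most care is the index bookkeeping in the second part: one must check that the shifted indices $j \ge 1$ reproduce the window of $f_{l(k)}$ exactly, and that the recursion $m(k+1)\le m(k)+1$ (rather than an unbounded window) is what prevents the enlarged maximum from reaching past $f_{k-m(k)}$. The genuine analytic content, by contrast, is entirely concentrated in the elementary inequality $f_{k+1}\le f_{l(k)}$, whose only subtlety is to ensure that the acceptance test of Algorithm 2 is read off the final accepted trial step rather than an intermediate rejected one.
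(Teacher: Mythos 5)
Your proposal is correct and follows essentially the same route as the paper: acceptance of the step plus $T_k \le f_{l(k)}$ (Lemma \ref{lem1}) gives $f_{k+1} \le f_{l(k)}$, and the window recursion $m(k+1) \le m(k)+1$ then yields $f_{l(k+1)} \le \max\{f_{k+1}, f_{l(k)}\} = f_{l(k)}$. The only difference is cosmetic: the paper additionally splits off the case $k<N$ (where $f_{l(k)}=f_0$), whereas your uniform index argument covers all $k$ at once, which is if anything slightly cleaner.
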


%%%%%%%%%%%%%
\begin{proof}
The condition (\ref{e.ine}) implies that $T_k \leq f_{l(k)}$.
If $x_{k+1}$ is accepted by Algorithm 2, then 
\begin{equation*}
\frac{f_{l(k)}-f(x_k+d_k)}{q_k(0)-q_k(d_k)}\geq
\frac{T_k-f(x_k+d_k)}{q_k(0)-q_k(d_k)}\geq \mu_1,
\end{equation*}
leading to
\begin{equation*}
f_{l(k)}-f(x_k+d_k)\geq \mu_1 (q_k(0)-q_k(d_k)) \geq 0,
~~~ \mathrm{for~all} ~k \in \mathbb{N},
\end{equation*}
implying
\begin{equation}\label{e:12}
f_{l(k)}\geq f_{k+1},~~~ \mathrm{for~all} ~k \in \mathbb{N}.
\end{equation}
Now, if $k\geq N$, by using $m(k+1) \leq m(k)+1$ and
(\ref{e:12}), we get
\begin{equation*}
f_{l(k+1)}= \max_{0\leq j\leq m(k+1)}\{f_{k-j+1}\} \leq
\max_{0\leq j\leq m(k)+1}\{f_{k-j+1}\} = \max
\{f_{l(k)},f_{k+1}\}\leq f_{l(k)}.
\end{equation*}
For $k<N$, it is obvious that $m(k)=k$. Since, for any $k$, $f_k\leq
f_0$, it is clear that $f_{l(k)}=f_0$. Therefore, in both cases,
the sequence $\{f_{l(k)}\}$ is decreasing. \qed
\end{proof}
%%%%%%%%%%%

%%%%%%%%%%%%%%%%%%%%%%%%%
\begin{lem} \label{e.inv}
Suppose that (H1) holds and the sequence $\{x_k\}$ is generated 
by Algorithm 2, then $L(x_0)$ involves $\{x_k\}$. 
\end{lem}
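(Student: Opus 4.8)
The plan is to show directly that every iterate satisfies $f_k \leq f_0$, since this inequality is precisely the statement that $x_k \in L(x_0) = \{x \in \mathbb{R}^n \mid f(x) \leq f(x_0)\}$. The whole argument will chain together the two previous lemmas, so no new machinery is needed. First I would pin down the base value of the ``max'' sequence: because $m(0)=0$ in the definition (\ref{e.grip}) of $f_{l(k)}$, the maximum is taken over a single index and hence $f_{l(0)} = f_0$.

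Next I would invoke the monotonicity result just established, namely that the sequence $\{f_{l(k)}\}$ is decreasing. Combined with the base case, this gives $f_{l(k)} \leq f_{l(0)} = f_0$ for every $k \in \mathbb{N} \cup \{0\}$. Finally, applying the sandwich bound (\ref{e.ine}), that is $f_k \leq T_k \leq f_{l(k)}$, I obtain
\[
f_k \;\leq\; T_k \;\leq\; f_{l(k)} \;\leq\; f_{l(0)} \;=\; f_0
\]
for all $k$, so each $x_k$ lies in $L(x_0)$, which is the claim.

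As for the main obstacle, there really is none of substance here: the statement is a short corollary of the two preceding lemmas. The only points I would be careful to check are that the base identity $f_{l(0)} = f_0$ holds (which rests on $m(0)=0$) and that both earlier lemmas genuinely apply to the full sequence $\{x_k\}$ produced by Algorithm 2, including the trial points accepted after inner-cycle contractions. It is worth remarking that hypothesis (H1) is not actually consumed in proving the containment itself; the level-set membership follows purely from the decrease of $\{f_{l(k)}\}$ and the bound $f_k \leq f_{l(k)}$. The role of (H1) is instead to supply the lower bound on $f$ over $L(x_0)$, which becomes essential in the subsequent global-convergence arguments rather than in this lemma.
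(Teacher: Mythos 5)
Your proof is correct and essentially the same as the paper's: both rest on the chain $f_k \leq T_k \leq f_{l(k)} \leq f_{l(0)} = f_0$ obtained from the sandwich bound (\ref{e.ine}) and the monotonicity of $\{f_{l(k)}\}$. The paper merely dresses this in an induction whose hypothesis is never actually used, so your direct version (including the apt observation that (H1) is not consumed here) is, if anything, a slightly cleaner rendering of the identical argument.
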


%%%%%%%%%%%%%
\begin{proof}
The definition of $T_k$ indicates that $T_0=f_0$. By
induction, we assume that $x_i \in L(x_0)$, for all
$i=1,2,\cdots,k$, and then prove that $x_{k+1} \in L(x_0)$. 
From (\ref{e.ine}), we get
\begin{equation*} 
f_{k+1} \leq T_{k+1} \leq f_{l(k+1)} \leq f_{l(k)} \leq f_0, 
\end{equation*}
implying that $L(x_0)$ involves the sequence $\{x_k\}$.\qed
\end{proof}
%%%%%%%%%%%

%%%%%%%%%%%%%%%%%%%%%%%%%%%%
\begin{cor} \label{e.conflk}
Suppose that (H1) holds and the sequence $\{x_k\}$ is generated
by Algorithm 2. Then the sequence $\{f_{l(k)}\}$ is convergent.
\end{cor}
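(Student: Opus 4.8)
The plan is to recognize that this corollary is an immediate consequence of the two preceding lemmas combined with the classical monotone convergence theorem for real sequences: a monotone sequence that is bounded below converges. The only two ingredients needed are the monotonicity of $\{f_{l(k)}\}$ and a uniform lower bound on it, and both are essentially already in hand, so the argument should be short.

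First I would invoke Lemma~\ref{e.flk} to assert that $\{f_{l(k)}\}$ is decreasing, which settles the monotonicity half of the hypothesis. It then remains to produce a lower bound. By Lemma~\ref{e.inv}, hypothesis (H1) guarantees that every iterate $x_k$ lies in the level set $L(x_0)$. Since (H1) further states that $f$ is bounded below on $L(x_0)$, there exists a constant $L^* \in \mathbb{R}$ with $f(x) \geq L^*$ for every $x \in L(x_0)$, and in particular $f_k \geq L^*$ for all $k$.

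Next I would transfer this bound to $f_{l(k)}$ itself. Because $f_{l(k)} = \max_{0 \leq j \leq m(k)} \{f_{k-j}\}$ is a maximum over a finite collection of objective values evaluated at points of $L(x_0)$, each term in the maximum is at least $L^*$, so $f_{l(k)} \geq L^*$ for all $k$. Hence $\{f_{l(k)}\}$ is a decreasing sequence bounded below by $L^*$, and the monotone convergence theorem gives that it converges to $\inf_k f_{l(k)} \geq L^*$. I do not expect any genuine obstacle here: all the substantive work is already carried out in Lemma~\ref{e.flk} (monotonicity) and Lemma~\ref{e.inv} (containment in $L(x_0)$), and the sole point to check is that boundedness below of $f$ on $L(x_0)$ passes through the $\max$ defining $f_{l(k)}$, which is immediate.
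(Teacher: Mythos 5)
Your proposal is correct and follows essentially the same route as the paper: monotonicity from Lemma~\ref{e.flk}, a lower bound from (H1) via the containment of the iterates in $L(x_0)$, and the monotone convergence theorem. The only difference is that you explicitly cite Lemma~\ref{e.inv} and pass the bound through the $\max$ defining $f_{l(k)}$, steps the paper leaves implicit in its chain $\lambda \leq f_{k+n} \leq f_{l(k+n)} \leq \cdots \leq f_{l(k)}$.
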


%%%%%%%%%%%%%
\begin{proof}
The assumption (H1) and Lemma \ref{e.flk} imply that there
exists a constant $\lambda$ such that 
\begin{equation*}
\lambda\leq
f_{k+n}\leq f_{l(k+n)}\leq \cdots \leq f_{l(k+1)}\leq f_{l(k)},
\end{equation*}
for all $n \in \mathbb{N}$. This implies that the sequence 
$\{f_{l(k)}\}$ is convergent. \qed
\end{proof}
%%%%%%%%%%%

%%%%%%%%%%%%%%%%%%%%%%%%
\begin{lem} \label{lem7} 
Suppose that (H1)-(H3) hold and the sequence $\{x_k\}$ is
generated by Algorithm 2, then 
\begin{equation}\label{e:14} 
\lim_{k\rightarrow \infty}f(x_{l(k)})=\lim_{k\rightarrow \infty}f_k.
\end{equation}
\end{lem}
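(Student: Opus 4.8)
The plan is to follow the classical Grippo--Lampariello--Lucidi scheme, exploiting that every iterate of Algorithm 2 is accepted (the inner cycle always terminates with $\widehat r_k \geq \mu_1$) and that, as shown in the proof of Lemma \ref{e.flk}, this yields the master inequality
\[
f_{l(k)} - f_{k+1} \geq \mu_1\,(q_k(0)-q_k(d_k)) \geq 0
\]
for every $k$, where $f_{k+1}=f(x_k+d_k)$. By Corollary \ref{e.conflk} the sequence $\{f_{l(k)}\}$ converges; call its limit $L$. Since $l(k)\geq k-m(k)\geq k-N$, we have $l(k)\to\infty$, so every shifted index $l(k)-j$ tends to infinity and, for any index sequence tending to infinity, the corresponding values of $f_{l(\cdot)}$ still converge to $L$.

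First I would prove, by induction on $j\geq 1$, the two claims $\|d_{l(k)-j}\|\to 0$ and $f_{l(k)-j}\to L$ as $k\to\infty$. For the base case $j=1$, apply the master inequality at index $l(k)-1$ to get
\[
f_{l(l(k)-1)} - f_{l(k)} \geq \mu_1\,(q_{l(k)-1}(0)-q_{l(k)-1}(d_{l(k)-1})) \geq 0;
\]
the left-hand side tends to $0$, so the predicted reduction at $l(k)-1$ vanishes. To convert this into $\|d_{l(k)-1}\|\to 0$ I would argue by contradiction: if $\|d_{l(k)-1}\|\geq\varepsilon>0$ along a subsequence, then $\delta_{l(k)-1}\geq\|d_{l(k)-1}\|\geq\varepsilon$ and, by (H3), $\|g_{l(k)-1}\|\geq\varepsilon/c$, so the sufficient reduction condition (\ref{e.ins}) together with (H2) bounds the predicted reduction away from $0$, a contradiction. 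Continuity of $f$, which is uniform on the level set $L(x_0)$ that contains all iterates by Lemma \ref{e.inv}, and the identity $x_{l(k)-1}=x_{l(k)}-d_{l(k)-1}$ then give $f_{l(k)-1}\to L$.

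The induction step is identical in spirit: assuming $\|d_{l(k)-j}\|\to 0$ and $f_{l(k)-j}\to L$, I would apply the master inequality at index $l(k)-j-1$, note that $f_{(l(k)-j-1)+1}=f_{l(k)-j}\to L$ and $f_{l(l(k)-j-1)}\to L$, deduce that the predicted reduction at $l(k)-j-1$ vanishes, obtain $\|d_{l(k)-j-1}\|\to 0$ by the same contradiction argument, and conclude $f_{l(k)-j-1}\to L$ by continuity. Finally, to return from the shifted sequences to $\{f_k\}$, I would use the standard index bound: since $k+1\leq l(k+N+1)\leq k+N+1$, one has $x_{k+1}=x_{l(k+N+1)-s}$ for some $s\in\{0,1,\dots,N\}$. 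As there are only finitely many shifts $s$ and each $f_{l(\cdot)-s}\to L$, the convergence is uniform in $s$, whence $f_{k+1}\to L=\lim_k f_{l(k)}$, which is exactly (\ref{e:14}).

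I expect the main obstacle to be the conversion of ``predicted reduction $\to 0$'' into ``$\|d\|\to 0$'': this is the only place where the trust-region structure (rather than a line search) really bites, and it is precisely where (H2), (H3), and the Cauchy-decrease bound (\ref{e.ins}) must be combined. The continuity step is routine provided $f$ is uniformly continuous on $L(x_0)$, and the bookkeeping with $l(k)$ must be handled carefully so that all shifted subsequences genuinely tend to infinity.
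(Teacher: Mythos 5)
Your proof is correct and follows essentially the same route as the paper: the paper's own proof of Lemma \ref{lem7} merely combines the sandwich inequality (\ref{e.ine}) with a citation of Lemma 7 of \cite{AhoA1}, and that cited lemma is established by precisely the Grippo--Lampariello--Lucidi induction you carry out (the master inequality obtained from acceptance plus $T_k \le f_{l(k)}$, induction on the shift $j$ using (H2), (H3), and (\ref{e.ins}) to force $\|d_{l(k)-j}\|\to 0$, and the finite-shift bookkeeping $k+1 \le l(k+N+1) \le k+N+1$ at the end). In effect you have inlined the argument the paper outsources; the only point to watch is that the uniform continuity of $f$ on $L(x_0)$ you invoke requires $L(x_0)$ to be bounded, which (H1) does not guarantee --- a caveat equally present in the cited reference, and one you could sidestep by instead estimating $f_{l(k)-j-1}-f_{l(k)-j}$ via the paper's Lemma \ref{l.dif} together with the vanishing predicted reduction.
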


%%%%%%%%%%%%%
\begin{proof}
The condition (\ref{e.ine}) and Lemma 7 of \cite{AhoA1} imply that the result is valid. \qed 
\end{proof}
%%%%%%%%%%%

%%%%%%%%%%%%%%%%%%%%%%%%%%%
\begin{cor} \label{c.limtk}
Suppose (H1)-(H3) hold and the sequence $\{x_k\}$ is generated by
Algorithm 2, then we
\begin{equation}\label{e.20} 
\lim_{k\rightarrow \infty}T_k=\lim_{k\rightarrow \infty}f_k.
\end{equation}
\end{cor}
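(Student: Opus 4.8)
The plan is to prove this as a direct sandwich (squeeze) argument, since essentially all of the substantive work has already been deposited in the preceding lemmas. The key ingredient is the two-sided bound $f_k \le T_k \le f_{l(k)}$ established in Lemma \ref{lem1}, which pins the nonmonotone term $T_k$ between the current function value $f_k$ and the reference value $f_{l(k)}$ at every iteration $k$. The whole corollary reduces to showing that the two bounding envelopes of this inequality converge to a common limit, after which $T_k$ has no choice but to follow.

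First I would record that the two outer sequences in this sandwich indeed share a single limit. Corollary \ref{e.conflk} guarantees that $\{f_{l(k)}\}$ is convergent, so $\lim_{k\rightarrow\infty} f_{l(k)}$ exists; denote it by $\lambda$. Lemma \ref{lem7} then asserts $\lim_{k\rightarrow\infty} f(x_{l(k)}) = \lim_{k\rightarrow\infty} f_k$, and since $f(x_{l(k)}) = f_{l(k)}$ this says precisely that $\lim_{k\rightarrow\infty} f_k$ exists and equals $\lambda$ as well. Hence both the lower envelope $\{f_k\}$ and the upper envelope $\{f_{l(k)}\}$ of the sandwich tend to the same value $\lambda$.

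With both bounding sequences converging to $\lambda$, I would apply the squeeze theorem to $f_k \le T_k \le f_{l(k)}$ and conclude $\lim_{k\rightarrow\infty} T_k = \lambda = \lim_{k\rightarrow\infty} f_k$, which is exactly the claimed identity (\ref{e.20}). No further estimate on $T_k$ itself is needed, because Lemma \ref{lem1} already controls it from both sides.

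I do not expect a genuine obstacle at this stage: the delicate convergence statement is Lemma \ref{lem7}, whose proof is imported from \cite{AhoA1}, and once that is in hand the corollary is immediate. The only point that deserves care is to verify that we are dealing with honest limits rather than a mere coincidence of $\limsup$ and $\liminf$ before invoking the squeeze; this is precisely what the combination of Corollary \ref{e.conflk} (convergence of $\{f_{l(k)}\}$) and Lemma \ref{lem7} (equality of the two limits) supplies, so the argument closes cleanly.
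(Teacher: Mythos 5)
Your proposal is correct and follows essentially the same route as the paper, which likewise deduces the result by squeezing $T_k$ between $f_k$ and $f_{l(k)}$ via inequality (\ref{e.ine}) together with Lemma \ref{lem7}. Your additional explicit appeal to Corollary \ref{e.conflk} to justify that the limits genuinely exist is a reasonable fleshing-out of what the paper leaves implicit.
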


%%%%%%%%%%%%%
\begin{proof}
From (\ref{e.ine}) and Lemma \ref{lem7}, the result is obtained.
\qed 
\end{proof}
%%%%%%%%%%%

%%%%%%%%%%%%%%%%%%%%%%%%%%
\begin{lem} \label{l.vers}
Suppose that (H1) and (H2) hold, and the sequence $\{x_k\}$ is generated by Algorithm 2. Then if $\|g_k\| \geq \varepsilon>0$, we have\\
(i) The inner cycle of Algorithm 2 is well-defined;\\
(ii) For any $k$, there exists a nonnegative integer $p$ 
such that $x_{k+p+1}$ is a very successful iteration.
\end{lem}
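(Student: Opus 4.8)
The plan is to derive both parts from a single estimate: that the nonmonotone ratio $\widehat r_k$ approaches $1$ whenever the radius $\delta_k$ becomes small, while $\|g_k\|\ge\varepsilon$ keeps the predicted reduction bounded away from zero. Since Lemma \ref{lem1} gives $T_k\ge f_k$, we have $\widehat r_k\ge r_k$, so it suffices to control $r_k$. I would write $|r_k-1|=|f_k-f(x_k+d_k)-(q_k(0)-q_k(d_k))|/(q_k(0)-q_k(d_k))$, bound the numerator by Lemma \ref{l.dif} together with $\|d_k\|\le\delta_k$ to get $O(\delta_k^2)$, and bound the denominator from below by the sufficient reduction condition (\ref{e.ins}). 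Using (H2), namely $\|B_k\|\le M$, and $\|g_k\|\ge\varepsilon$, for $\delta_k\le\varepsilon/M$ the minimum in (\ref{e.ins}) equals $\delta_k$, so $q_k(0)-q_k(d_k)\ge\beta\varepsilon\delta_k$ and hence $|r_k-1|\le O(\delta_k)/(\beta\varepsilon)$.

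For part (i), observe that each internal step of the inner cycle multiplies $\delta_k$ by $\rho_1\in(0,1)$, so $\delta_k\to0$ along the inner iterations. By the estimate above $r_k\to1$, and since $\widehat r_k\ge r_k$, we obtain $\widehat r_k\ge\mu_1$ once $\delta_k$ is sufficiently small; thus the inner cycle is exited after finitely many internal steps and is well-defined.

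For part (ii), the same estimate yields a threshold $\bar\delta\in(0,\varepsilon/M]$, depending only on $\varepsilon,\beta,M,\mu_2$, such that $\delta_k\le\bar\delta$ forces $\widehat r_k\ge r_k\ge\mu_2$. Suppose, for contradiction, that none of the iterations $k,k+1,\dots$ is very successful; then the expansion branch is never executed and $\{\delta_j\}_{j\ge k}$ is non-increasing. If some $\delta_j$ drops to $\bar\delta$ or below, that iteration is very successful, a contradiction. Otherwise $\delta_j>\bar\delta$ for all $j\ge k$; since every such iteration is nonetheless successful, (\ref{e.ins}) together with $\|g_j\|\ge\varepsilon$ gives a uniform bound $T_j-f_{j+1}\ge\mu_1\beta\varepsilon\min\{\bar\delta,\varepsilon/M\}=:c>0$, and with $T_j\le f_{l(j)}$ from Lemma \ref{lem1} this yields $f_{l(j)}-f_{j+1}\ge c$ for all $j\ge k$.

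The main obstacle is closing this last case in the nonmonotone setting. Here I would invoke that $\{f_{l(k)}\}$ converges to some $L^*$ by Corollary \ref{e.conflk} (which needs only (H1)); then $f_{j+1}\le f_{l(j)}-c$ forces $\limsup_j f_j\le L^*-c$, so $f_j\le L^*-c/2$ for all large $j$. Because the nonmonotone window has bounded length, $m(j)\le N$, for all sufficiently large $j$ every index in the window defining $f_{l(j)}$ lies in this range, whence $f_{l(j)}\le L^*-c/2<L^*$, contradicting $f_{l(j)}\to L^*$. This rules out an indefinite run of successful-but-not-very-successful steps with the radius bounded away from zero, completing part (ii). The delicate point throughout is that the per-step decrease must be measured against $T_j$ rather than $f_j$, so that the bound $T_j\le f_{l(j)}$ lets the nonmonotone bookkeeping collapse onto the convergent quantity $f_{l(j)}$; note also that only (H1) and (H2) are available here, so I must avoid the results (Lemma \ref{lem7}, Corollary \ref{c.limtk}) that rely on (H3).
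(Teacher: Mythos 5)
Your proposal is correct. Part (i) follows the paper's own argument essentially verbatim: the inner cycle contracts the radius by $\rho_1$, Lemma \ref{l.dif} and the sufficient reduction condition (\ref{e.ins}) give $r_k\to 1$ along the inner iterations, and $\widehat r_k\geq r_k$ (from $T_k\geq f_k$, Lemma \ref{lem1}) forces exit after finitely many contractions.

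For part (ii), however, you take a genuinely different route, and in one respect a better one. The paper argues by contradiction as you do, but it deduces $\delta_{k+p}\to 0$ by combining the bound $T_{k+p}-f_{k+p+1}\geq\beta\mu_1\varepsilon\min\{\delta_{k+p},\varepsilon/M\}$ with (\ref{e.20}), i.e., with Corollary \ref{c.limtk} asserting $\lim T_k=\lim f_k$; once $\delta_{k+p}\to 0$, the ratio estimate yields $r_{k+p}\geq\mu_2$ for large $p$, contradicting $\widehat r_{k+p}<\mu_2$. The catch is that Corollary \ref{c.limtk} rests on Lemma \ref{lem7}, which is stated under (H1)--(H3), so the paper's proof silently imports (H3) even though the lemma assumes only (H1) and (H2) --- precisely the mismatch you flag at the end of your proposal. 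Your argument closes this gap: you introduce an explicit threshold $\bar\delta$ below which any accepted radius is automatically very successful, and in the remaining case ($\delta_j>\bar\delta$ for all $j\geq k$) you obtain a uniform gap $f_{l(j)}-f_{j+1}\geq T_j-f_{j+1}\geq c>0$, which you contradict using only Corollary \ref{e.conflk} (convergence of the decreasing sequence $\{f_{l(k)}\}$, needing just (H1)) together with the bounded window $m(j)\leq N$ in (\ref{e.grip}): once every $f_i$ with $i$ large sits below $L^*-c/2$, so does $f_{l(j)}$, contradicting $f_{l(j)}\to L^*$. What the paper's route buys is brevity, since Corollary \ref{c.limtk} does all the asymptotic work in one stroke; what yours buys is consistency with the stated hypotheses and independence from (H3) and from the external Lemma 7 of \cite{AhoA1}. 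Two minor caveats are shared by both proofs rather than introduced by you: one needs $\rho_1<1$ and $\mu_2<1$ strictly (otherwise the inner radius does not shrink, respectively $r\to 1$ does not imply $r\geq\mu_2$), and one needs the constant hidden in the $O(\|d_k\|^2)$ of Lemma \ref{l.dif} to be uniform in $k$.
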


%%%%%%%%%%%%%
\begin{proof} 
(i) Let $t_k$ denotes the internal iteration counter in  step $k$, and $d_k^{t_k}$ and $\delta_k^{t_k}$ respectively show the solution of the subproblem (\ref{e.sub}) and the corresponding trust-region radius in the internal iteration $t_k$. The fact that $\|g_k\| \geq \varepsilon>0$, (H2), 
and (\ref{e.ins}) imply
\begin{equation}\label{e:23}
q_k(0)-q_k(d_k^{t_k})\geq \beta \|g_k\|~\min\left\{\delta_k^{t_k},
\frac{\|g_k\|}{\|B_k\|} \right\} \geq \beta \varepsilon~
\min \left\{\delta_k^{t_k}, \frac{\varepsilon}{M} \right\}.
\end{equation}
Then Line 8 of Algorithm 2 implies
\begin{equation*}
\lim_{t_k \rightarrow \infty} \delta_k^{t_k}=0.
\end{equation*}
From This, Lemma \ref{l.dif}, and (\ref{e:233}), we obtain
\begin{equation*}\begin{split}
|r_k-1|&=\left|\frac{f_k-f(x_k+d_k^{t_k})}
{q_k(0)-q_k(d_k^{t_k})}-1\right|
=\left|\frac{f_k-f(x_k+d_k^{t_k})-(q_k(0)-q_k(d_k^{t_k}))}{q_k(0)-q_k(d_k^{t_k})}\right|\\
&\leq \frac{O(\|d_k^{t_k}\|^2)}{\beta \varepsilon~
\min\left\{\delta_k^{t_k}, \varepsilon/M \right\}}\leq
\frac{O((\delta_k^{t_k})^2)}{\beta \varepsilon~ \min\left\{\delta_k^{t_k},
\varepsilon/M \right\}}\rightarrow 0 ~~~
(t_k\rightarrow\infty),
\end{split}\end{equation*}
implying that there exists a positive integer $k_0$ such that for $k\geq k_0$
we have $r_k \geq \mu_1$. This and (\ref{e.ine}) lead to 
\begin{equation*}
\widehat{r}_k = \frac{T_k-f(x_k+d_k^{t_k})}{q_k(0)-q_k(d_k^{t_k})}
\geq\frac{f_k-f(x_k+d_k^{t_k})}{q_k(0)-q_k(d_k^{t_k})}
\geq \mu_1,
\end{equation*}
implying that the inner cycle is well-defined.

(ii) Assume that there exists a positive integer
$k$ such that for an arbitrary positive integer $p$ the point
$x_{k+p+1}$ is not very successful. Hence, for any constant
$p=0,1,2,\cdots$, we get
\[
\widehat{r}_{k+p}<\mu_2.
\]
The fact that $\|g_k\| \geq \varepsilon>0$, (H2), 
and (\ref{e.ins}) imply
\begin{equation}\label{e:233}
\begin{split}
T_{k+p} - f(x_{k+p} + d_{k+p}) &\geq \mu_1 (q_{k+p}(0)-q_{k+p}(d_{k+p}))\geq \beta \mu_1 \|g_{k+p}\|~\min\left\{\delta_{k+p},
\frac{\|g_{k+p}\|}{\|B_{k+p}\|} \right\} \\
&\geq \beta \mu_1 \varepsilon~
\min \left\{\delta_{k+p}, \frac{\varepsilon}{M} \right\}.
\end{split}
\end{equation}
By using (\ref{e.20}) and (\ref{e:233}), we can write
\begin{equation}\label{e:22}
\lim_{p \rightarrow \infty} \delta_{k+p}=0.
\end{equation}
From Lemma \ref{l.dif}, (\ref{e:22}), and (\ref{e:23}), we obtain
\begin{equation*}\begin{split}
|r_{k+p}-1|&=\left|\frac{f(x_{k+p})-f(x_{k+p}+d_{k+p})}
{q_{k+p}(0)-q_{k+p}(d_{k+p})}-1\right|\\
&=\left|\frac{f(x_{k+p})-f(x_{k+p}+d_{k+p})-(q_{k+p}(0)-q_{k+p}
(d_{k+p}))}{q_{k+p}(0)-q_{k+p}(d_{k+p})}\right|\\
&\leq \frac{O(\|d_{k+p}\|^2)}{\beta \varepsilon~
\min\left\{\delta_{k+p}, \varepsilon/M \right\}}\leq
\frac{O(\delta_{k+p}^2)}{\beta \varepsilon~ \min\left\{\delta_{k+p},
\varepsilon/M \right\}}\rightarrow 0 ~~~
(p\rightarrow\infty).
\end{split}\end{equation*}
Then, for a sufficiently large $p$, we get $r_{k+p} \geq \mu_2$
leading to
\begin{equation*}\frac{T_{k+p}-f(x_{k+p}+d_{k+p})}{q_{k+p}(0)-q_{k+p}(d_{k+p})}
\geq\frac{f(x_{k+p})-f(x_{k+p}+d_{k+p})}{q_{k+p}(0)-q_{k+p}(d_{k+p})}
\geq \mu_2.
\end{equation*}
implying $\widehat{r}_{k+p}\geq\mu_2$, for a sufficiently
large $p$. This contradicts with assumption $\widehat{r}_{k+p}<\mu_2$ giving the result.
\qed
\end{proof}
%%%%%%%%%%%

Lemma \ref{l.vers}(i) implies that the inner cycle will be leaved after a finite number of internal iterations, and Lemma \ref{l.vers}(ii) implies that if the current iteration is not a first-order stationary point, then at least there exists a very successful iteration point, i.e., the trust-region radius $\delta_k$ can be enlarged. The next result gives the global convergence of the sequence $\{x_k\}$ of Algorithm 2.

%%%%%%%%%%%%%%%%%%%%%%%%%%
\begin{thm} \label{t.gloc}
Suppose that (H1) and (H2) hold, and suppose the sequence 
$\{x_k\}$ is generated by Algorithm 2. Then
\begin{equation}\label{e:24}\liminf_{k \rightarrow \infty} \|g_k\|=0.\end{equation}
\end{thm}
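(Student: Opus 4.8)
The plan is to argue by contradiction. Suppose \eqref{e:24} fails; then there exist $\varepsilon>0$ and an index $K$ with $\|g_k\|\geq\varepsilon$ for all $k\geq K$. By Lemma \ref{l.vers}(i) the inner cycle always terminates with $\widehat{r}_k\geq\mu_1$, so $x_{k+1}=x_k+d_k$ is accepted at every outer iteration. Combining $\widehat{r}_k\geq\mu_1$ with the sufficient reduction condition \eqref{e.ins} and (H2) gives, for all $k\geq K$,
\begin{equation*}
T_k-f_{k+1}\geq\mu_1\bigl(q_k(0)-q_k(d_k)\bigr)\geq\mu_1\beta\,\varepsilon\,\min\!\left\{\delta_k,\tfrac{\varepsilon}{M}\right\}.
\end{equation*}

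First I would show that the left-hand side vanishes in the limit. By Corollary \ref{e.conflk} the sequence $\{f_{l(k)}\}$ converges, Lemma \ref{lem7} identifies its limit with $\lim_{k}f_k$, and Corollary \ref{c.limtk} yields $\lim_{k}T_k=\lim_{k}f_k$. Hence $\lim_{k\to\infty}(T_k-f_{k+1})=0$, and substituting this into the displayed inequality forces $\min\{\delta_k,\varepsilon/M\}\to 0$, that is, $\delta_k\to 0$.

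The decisive step is to contradict $\delta_k\to 0$ by exhibiting a uniform positive lower bound on the accepted radii. The mechanism is already inside the proof of Lemma \ref{l.vers}(i): since $\|d_k\|\leq\delta_k$, the estimate of Lemma \ref{l.dif} controlled through \eqref{e.ins} and (H2) produces a threshold $\overline{\delta}>0$, depending only on $\varepsilon$ and $M$, such that any trial radius $\delta\leq\overline{\delta}$ gives $r_k\geq\mu_1$, hence $\widehat{r}_k\geq r_k\geq\mu_1$, so the step is accepted without further contraction. Consequently, in any outer iteration the last \emph{rejected} radius must exceed $\overline{\delta}$, whence the accepted radius exceeds $\rho_1\overline{\delta}$; moreover the update $\delta_{k+1}\in\{\delta_k,\rho_2\delta_k\}$ with $\rho_2\geq 1$ shows that the entry radius of each outer cycle is at least the radius accepted in the previous one. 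A short induction from $k=K$ then yields $\delta_k\geq\delta_{\min}:=\min\{\delta_K,\rho_1\overline{\delta}\}>0$ for all $k\geq K$, contradicting $\delta_k\to 0$ and thereby establishing \eqref{e:24}.

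I expect the crux to be the uniformity of the acceptance threshold $\overline{\delta}$ over all outer iterations, which amounts to reading the constant hidden in the $O(\|d_k\|^2)$ term of Lemma \ref{l.dif} as independent of $k$. Granting this---it follows from the continuous differentiability of $f$ together with (H2) and the fact that $\{x_k\}\subseteq L(x_0)$ by Lemma \ref{e.inv}---the threshold depends only on $\varepsilon$ and $M$, and the remaining work (tracking the radius through the contraction factor $\rho_1$ in the inner cycle and the non-contracting update $\rho_2\geq 1$ across outer cycles) is routine bookkeeping.
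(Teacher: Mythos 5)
Your proposal is correct, and its first half coincides with the paper's own argument: both assume $\|g_k\|\geq\varepsilon$ for all $k\geq K$, combine $\widehat{r}_k\geq\mu_1$ with (\ref{e.ins}), (H2), and Corollary \ref{c.limtk} ($\lim_k T_k=\lim_k f_k$) to force $\delta_k\to 0$. Where you part ways is the mechanism for contradicting $\delta_k\to 0$. The paper splits into two cases according to whether there are finitely or infinitely many very successful iterations, disposes of the finite case with Lemma \ref{l.vers}(ii), and in the infinite case asserts that infinitely many enlargements of the trust region contradict $\delta_k\to 0$; as written this closing step is loose, since enlargements by a factor $\rho_2\geq 1$ do not by themselves prevent the radius from tending to zero. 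You instead give the classical Powell-type argument: Lemma \ref{l.dif}, (\ref{e.ins}), and the inequality $\widehat{r}_k\geq r_k$ yield a uniform threshold $\overline{\delta}>0$ below which every trial radius is accepted, so the accepted radius can never drop below $\min\{\delta_K,\rho_1\overline{\delta}\}$, a direct contradiction. This buys you a single unified case (only successful iterations are needed, with no appeal to very successful ones or to Lemma \ref{l.vers}(ii)) and a tighter, self-contained final step than the paper's. The caveat you flag --- uniformity in $k$ of the constant hidden in $O(\|d_k\|^2)$ --- is not a deficiency relative to the paper: the paper's own proofs of Lemma \ref{l.vers} and Theorem \ref{t.sgolc} rely on the same estimate $O(\delta_k^2)/(\beta\varepsilon\delta_k)\to 0$, which presupposes exactly the same uniformity (e.g.\ boundedness of $L(x_0)$ or uniform continuity of $g$ on it); you at least make the assumption explicit.
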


%%%%%%%%%%%%%
\begin{proof}
We consider two cases: 
(i) Algorithm 2 has finitely many very successful iterations;
(ii) Algorithm 2 has infinitely many very successful iterations.

In Case 1, we suppose that $k_0$ be the largest index of
very successful iterations. If $\|g_{k_0+1}\|>0$, 
then Lemma \ref{l.vers}(ii) implies that there exist a very successful iteration with larger index than $k_0$.
This is a contradiction to the definition of $k_0$.

In Case 2, by contradiction, we assume that there exist constants
$\varepsilon>0$ and $K>0$ such that
\begin{equation}\label{e:25} 
\|g_k\|\geq \varepsilon,
\end{equation}
for all $k\geq K$. If $x_{k+1}$ is a successful iteration 
and $k \geq K$, then by using
(H2), (\ref{e.ins}), and (\ref{e:25}), we get
\begin{equation}\begin{split}\label{e:26}
T_k-f(x_k+d_k) & \geq \mu_1(q_k(0)-q_k(d_k))\\
&\geq \beta \mu_1 \|g_k\|~ \min\left\{\delta_k,
\frac{\|g_k\|}{\|B_k\|}\right\} \geq \beta \mu_1 \varepsilon~
\min\left\{\delta_k, \frac{\varepsilon}{M} \right\} \geq 0.
\end{split}\end{equation}
It follows from this inequality and (\ref{e.20}) that
\begin{equation}\label{e:27}
\lim_{k \rightarrow \infty} \delta_k=0.
\end{equation}
Since Algorithm 2 has infinitely many very successful iterations, 
then Lemma \ref{l.vers}(ii) and (\ref{e:25}) imply that the sequence 
$\{x_k\}$ involves infinitely many very successful iterations in 
which the trust-region is enlarged, which is a contradiction
with (\ref{e:27}). This implies the result is valid. \qed
\end{proof}
%%%%%%%%%%%

%%%%%%%%%%%%%%%%%%%%%%%%%%
\begin{thm} \label{t.sgolc}
Suppose that (H1) and (H2) hold, and the sequence $\{x_k\}$ is generated by Algorithm 2. Then 
\begin{equation}\label{e:28}
\lim_{k \rightarrow \infty} \|g_k\|=0.
\end{equation}
Moreover, there is no limit point of the sequence $\{x_k\}$ 
to be a local maximizer of $f$.
\end{thm}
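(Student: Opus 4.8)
The plan is to upgrade the $\liminf$ statement of Theorem~\ref{t.gloc} to the full limit and then to rule out local maximizers among the limit points. First I would recall that Theorem~\ref{t.gloc} already gives $\liminf_{k\to\infty}\|g_k\|=0$, and that Lemma~\ref{lem7} together with Corollary~\ref{e.conflk} and Corollary~\ref{c.limtk} furnish a common limit $f^\ast$ with $\lim_k f_k=\lim_k f_{l(k)}=\lim_k T_k=f^\ast$. The target $\lim_k\|g_k\|=0$ is then proved by contradiction: assuming it fails, $\limsup_k\|g_k\|=2\varepsilon>0$ for some $\varepsilon>0$, so some subsequence satisfies $\|g_{m_i}\|\ge 2\varepsilon$, while $\liminf_k\|g_k\|=0$ produces indices where the gradient drops below $\varepsilon$.

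The core of the argument is a \emph{window} construction. For each $m_i$ I would let $\ell_i$ be the first index exceeding $m_i$ with $\|g_{\ell_i}\|<\varepsilon$, so that $\|g_k\|\ge\varepsilon$ for all $m_i\le k<\ell_i$. On the successful iterations inside this window, the sufficient-reduction condition (\ref{e.ins}) together with (\ref{e.ine}) gives
\[
T_k-f_{k+1}\ \ge\ \mu_1\beta\varepsilon\,\min\Bigl\{\delta_k,\tfrac{\varepsilon}{M}\Bigr\}\ \ge\ 0 .
\]
Because $T_k-f_{k+1}=(T_k-f^\ast)+(f^\ast-f_{k+1})\to 0$ by Corollary~\ref{c.limtk}, the radii $\delta_k$ along these iterations tend to zero, so eventually $\min\{\delta_k,\varepsilon/M\}=\delta_k\ge\|d_k\|=\|x_{k+1}-x_k\|$ and hence $T_k-f_{k+1}\ge\mu_1\beta\varepsilon\|x_{k+1}-x_k\|$. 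Summing over the successful iterations of the window (unsuccessful inner steps leave $x_k$ fixed) I would bound the total displacement $\|x_{\ell_i}-x_{m_i}\|$ by a multiple of the accumulated reductions and let this tend to $0$. Finally, uniform continuity of $\nabla f$ on the region containing $\{x_k\}$ yields $\|g_{\ell_i}-g_{m_i}\|\to 0$, contradicting $\|g_{m_i}\|-\|g_{\ell_i}\|\ge 2\varepsilon-\varepsilon=\varepsilon$.

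I expect the main obstacle to be exactly this telescoping step in the nonmonotone setting: unlike the monotone case, where $\sum(f_k-f_{k+1})$ collapses to $f_{m_i}-f_{\ell_i}$, here the reference value $T_k$ (equivalently $f_{l(k)}$) does not telescope termwise, and the number of iterations per window may grow with $i$. Controlling $\sum_{m_i\le k<\ell_i}(T_k-f_{k+1})$ requires leaning on the monotone, convergent reference sequence $f_{l(k)}$ (Lemma~\ref{e.flk}, Corollary~\ref{e.conflk}) and on the bounded memory length $N$, so that the accumulated decrease over each window is dominated by a tail of the convergent sequence $\{f_{l(k)}\}$ and therefore vanishes as $i\to\infty$.

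For the final assertion, let $x^\ast$ be any limit point with $x_{k_j}\to x^\ast$; continuity of $\nabla f$ and the just-proved $\lim_k\|g_k\|=0$ give $g(x^\ast)=0$, while continuity of $f$ with $\lim_k f_k=f^\ast$ gives $f(x^\ast)=f^\ast$ for every limit point. To exclude local maximizers I would argue by contradiction: near a strict local maximizer $f$ lies strictly below $f(x^\ast)$, so minimizing the model $q_k$ globally over the trust region (the subproblem attains its minimizer on the compact ball) along a negative-curvature direction of $B_k$ produces an actual reduction that drives $f_{k_j+1}$ strictly below $f^\ast$ by a fixed amount for the infinitely many iterates near $x^\ast$, contradicting $f_k\to f^\ast$. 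This part is the most delicate, since the first-order reduction (\ref{e.ins}) degenerates as $g_k\to 0$; it genuinely needs the negative-curvature information carried by the globally solved subproblem (with $B_k$ reflecting the indefinite Hessian near $x^\ast$), which is where I would concentrate the technical effort.
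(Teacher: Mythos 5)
Your window construction, the per-iteration bound $T_k-f_{k+1}\geq \mu_1\beta\varepsilon\min\{\delta_k,\varepsilon/M\}$ from (\ref{e.ins}) and (\ref{e.ine}), and the conclusion $\delta_k\to 0$ along the window via Corollary \ref{c.limtk} coincide exactly with the paper's proof up to (\ref{e:31}). But the step you yourself flag as ``the main obstacle'' is a genuine gap, and your proposed repair does not close it. Summing your per-iteration bound gives $\|x_{m_i}-x_{\ell_i}\|\leq \frac{1}{\mu_1\beta\varepsilon}\sum_{k=m_i}^{\ell_i-1}(T_k-f_{k+1})$, and you need this sum to vanish as $i\to\infty$. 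Knowing only that $T_k-f_{k+1}\to 0$ is not enough: the windows $[m_i,\ell_i)$ (the paper's $[t_i,r_i)$) may contain arbitrarily many iterations, and a sum of nonnegative terms that individually tend to zero, taken over windows of unbounded length, need not tend to zero. Neither the monotonicity of $\{f_{l(k)}\}$ nor the bounded memory $N$ rescues this: unlike the Zhang--Hager term, for which $C_k-f_{k+1}\leq c\,(C_k-C_{k+1})$ yields termwise domination by a telescoping (hence convergent) series, the terms (\ref{e.tk1}) and (\ref{e.tk2}) --- and $f_{l(k)}$ itself --- satisfy no such inequality, so $\sum(T_k-f_{k+1})$ is not controlled by any tail of the convergent sequence $\{f_{l(k)}\}$.

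The paper closes precisely this hole with one extra idea that is missing from your sketch: once $\delta_k\to 0$ on the window, invoke Lemma \ref{l.dif} to conclude that the \emph{monotone} ratio satisfies $|r_k-1|\leq O(\delta_k^2)/\bigl(\beta\varepsilon\min\{\delta_k,\varepsilon/M\}\bigr)\to 0$, so for all sufficiently large window indices the iteration is in fact monotone, $f_k-f_{k+1}\geq \mu_1(q_k(0)-q_k(d_k))\geq \beta\mu_1\varepsilon\,\delta_k$, as in (\ref{e:32})--(\ref{e:33}). The displacement bound then telescopes with plain function values,
\begin{equation*}
\|x_{t_i}-x_{r_i}\|\ \leq\ \sum_{j=t_i}^{r_i-1}\delta_j\ \leq\ \frac{1}{\beta\mu_1\varepsilon}\,(f_{t_i}-f_{r_i})\ \leq\ \frac{1}{\beta\mu_1\varepsilon}\,(T_{t_i}-f_{r_i})\ \longrightarrow\ 0,
\end{equation*}
by (\ref{e.ine}) and Corollary \ref{c.limtk}, which is (\ref{e:34}); continuity of $g$ then gives the contradiction. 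In short: the nonmonotone reduction is used only to force $\delta_k\to 0$, after which the argument is deliberately converted back to the monotone setting where telescoping is available --- this conversion is the missing ingredient. Finally, on the ``no local maximizer'' claim: the paper simply defers to \cite{GriLL1}, whereas your negative-curvature argument cannot work under (H1)--(H2) alone, since $B_k$ is merely a uniformly bounded approximation (for instance a positive definite BFGS update) and need not carry any curvature information about $\nabla^2 f$ near $x^*$; that kind of reasoning would require an assumption such as (H4), which this theorem does not make.
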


%%%%%%%%%%%%%
\begin{proof}
By contradiction, we assume $\lim_{k \rightarrow \infty} \|g_k\|\neq0$. Hence there exists $\varepsilon>0$ and an infinite subsequence of $\{x_k\}$, indexed by $\{t_i\}$, such that
\begin{equation} \label{e:29} 
\|g_{t_i}\| \geq 2\varepsilon>0, 
\end{equation}
for all $i \in \mathbb{N}$. Theorem \ref{t.gloc} ensures the existence, for each $t_i$, a first successful
iteration $r(t_i)>t_i$ such that $\|g_{r(t_i)}\| < \varepsilon$. We denote $r_i=r(t_i)$. Hence there exists another subsequence, indexed by $\{r_i\}$, such that
\begin{equation}\label{e:30} 
\|g_k\|\geq \varepsilon~~ \mathrm{for}~~ t_i \leq k < r_i,~~~ \|g_{r_i}\|<\varepsilon.
\end{equation}
We now restrict our attention to the sequence of successful
iterations whose indices are in the set 
\[
\kappa = \{k \in \mathbb{N}\ |\ t_i \leq k < r_i \}.
\]
Using (\ref{e:30}), for every $k \in \kappa$, (\ref{e:26}) holds. It follows from (\ref{e.20}) and (\ref{e:26}) that 
\begin{equation}\label{e:31}
\lim_{k \rightarrow \infty} \delta_k=0, 
\end{equation}
for $k \in \kappa$. Now, using (H2), (\ref{e.ins}), and $\|g_k\| \geq \varepsilon$, the condition (\ref{e:23}) holds, for $k \in \kappa$. This, Lemma \ref{l.dif}, and (\ref{e:31}) lead to
\begin{equation*}\begin{split}
|r_k-1|&=\left|\frac{f_k-f(x_k+d_k)}{q_k(0)-q_k(d_k)}-1\right|=\left|\frac{f_k-f(x_k+d_k)-(q_k(0)-q_k(d_k))}{q_k(0)-q_k(d_k)}\right|\\
&\leq \frac{O(\|d_k\|^2)}{\beta \varepsilon~ \min\left\{\delta_k,
\varepsilon/M \right\}}\leq \frac{O(\delta_k^2)}{\beta
\varepsilon \delta_k }\rightarrow 0 ~~~ (k\rightarrow\infty,~ k\in\kappa).
\end{split}\end{equation*}
Thus, for a sufficiently large $k+1 \in \kappa$, we get
\begin{equation}\begin{split}\label{e:32}
f_k-f(x_k+d_k) & \geq \mu_1(q_k(0)-q_k(d_k))\\
&\geq \beta \mu_1 \|g_k\| \min\left\{\delta_k,
\frac{\|g_k\|}{\|B_k\|}\right\} \geq \beta \mu_1 \varepsilon~
\min\left\{\delta_k, \frac{\varepsilon}{M} \right\}.
\end{split}\end{equation}
The condition (\ref{e:31}) implies that $\delta_k \leq \varepsilon/M$. Hence, for a sufficiently large $k \in \kappa$, we obtain
\begin{equation}\label{e:33}\delta_k \leq \frac{1}{\beta \mu_1}(f_k-f_{k+1}).\end{equation}
Then (\ref{e.ine}) and (\ref{e:33}) imply
\begin{equation}\label{e:34}
\|x_{t_i}-x_{r_i}\| \leq \sum_{j \in \kappa,
j=t_i}^{r_i-1}\|x_{j}-x_{j+1}\| \leq \sum_{j \in \kappa,
j=t_i}^{r_i-1} \delta_j \leq \frac{1}{\beta
\mu_1}(f_{t_i}-f_{r_i}) \leq \frac{1}{\beta
\mu_1}(T_{t_i}-f_{r_i}),
\end{equation}
for a sufficiently large $i$. Now, Corollary 8 implies 
\[
0 \leq \lim_{i \rightarrow \infty} \|x_{t_i}-x_{r_i}\| \leq  \lim_{i \rightarrow \infty} \frac{1}{\beta
\mu_1}(T_{t_i}-f_{r_i}) = 0,
\]
leading to
\[
\lim_{i \rightarrow \infty} \|x_{t_i}-x_{r_i}\| = 0.
\]
Since the gradient is continuous, we get
\begin{equation}\label{e:35}
\lim_{i \rightarrow \infty} \|g_{t_i}-g_{r_i}\|=0.\end{equation}
In view of the definitions of $\{t_i\}$ and $\{r_i\}$, it is impossible, guaranteeing $\|g_{t_i}-g_{r_i}\| \geq \varepsilon$. Therefore, there is no subsequence that satisfies (\ref{e:29}) giving the result. 

To observe there is no limit point of the sequence $\{x_k\}$ 
to be a local maximizer of $f$, see \cite{GriLL1}. \qed
\end{proof}
%%%%%%%%%%%

The next result gives the global convergence of the sequence generated by Algorithm 2 to second-order stationary points. To this end, similar to \cite{DenXZ}, an additional assumption is needed: \\\\
{\bf(H4)} If $\lambda_{\min}(B_k)$ represents the smallest
eigenvalue of the symmetric matrix $B_k$, then there exists a positive scalar $c_3$ such that
\begin{equation*}
q_k(0)-q_k(d_k) \geq c_3 \lambda_{\min}(B_k) \delta^2.
\end{equation*}

%%%%%%%%%%%%%%%%%%%%%%%%%%
\begin{thm} \label{t.scon}
Suppose that $f$ is twice continuously differentiable
and also suppose that (H1)--(H4) hold. Then there exists a limit point $x^*$ of the sequence $\{x_k\}$ generated by Algorithm 2 such that $\nabla^2 f(x^*)$ is positive semidefinite.
\end{thm}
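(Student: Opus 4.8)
The plan is to argue by contradiction, bootstrapping the first-order result of Theorem~\ref{t.sgolc} with the negative-curvature estimate (H4). Since $\{x_k\}\subset L(x_0)$ by Lemma~\ref{e.inv} and $\|g_k\|\to0$ by Theorem~\ref{t.sgolc}, every limit point of $\{x_k\}$ is already a first-order stationary point, so it remains only to exhibit one at which $\nabla^2 f$ is positive semidefinite. Suppose, to the contrary, that \emph{every} limit point $x^*$ satisfies $\lambda_{\min}(\nabla^2 f(x^*))<0$. Because the set of limit points is compact and $x\mapsto\lambda_{\min}(\nabla^2 f(x))$ is continuous (here the twice continuous differentiability of $f$ enters), there exist a constant $\tau>0$ and an index $K$ with $\lambda_{\min}(\nabla^2 f(x_k))\le-\tau$ for all $k\ge K$; otherwise a subsequence would accumulate at a limit point with $\lambda_{\min}\ge-\tau/2$, contradicting the standing assumption.

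Next I would transfer this negative curvature from $\nabla^2 f(x_k)$ to $B_k$. Under the (implicit) consistency requirement $\|B_k-\nabla^2 f(x_k)\|\to0$ (or simply $B_k=\nabla^2 f(x_k)$), Weyl's inequality gives $|\lambda_{\min}(B_k)-\lambda_{\min}(\nabla^2 f(x_k))|\le\|B_k-\nabla^2 f(x_k)\|$, hence $\lambda_{\min}(B_k)\le-\tau/2$ for all sufficiently large $k$. Feeding this into (H4) yields the uniform lower bound
\[
q_k(0)-q_k(d_k)\ge c_3\,|\lambda_{\min}(B_k)|\,\delta_k^2\ge \tfrac{1}{2}c_3\tau\,\delta_k^2 ,
\]
valid for all large $k$. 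On every successful iteration the acceptance rule gives $T_k-f_{k+1}\ge\mu_1\bigl(q_k(0)-q_k(d_k)\bigr)\ge\tfrac{1}{2}\mu_1 c_3\tau\,\delta_k^2\ge0$, and since Corollary~\ref{c.limtk} forces $T_k$ and $f_k$ to share the same finite limit, the left-hand side tends to zero. Consequently $\delta_k\to0$ along the successful iterations.

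The final and delicate step is to contradict $\delta_k\to0$. Unlike the first-order analysis, the predicted decrease coming from negative curvature is only \emph{quadratic} in $\delta_k$, of the same order $\delta_k^2$ as the error term $\|d_k\|^2$ in Lemma~\ref{l.dif}, so the crude bound $O(\|d_k\|^2)$ does not make $|r_k-1|$ vanish. The resolution is the sharper Taylor estimate available for $C^2$ functions: writing $f(x_k+d_k)-q_k(d_k)=\tfrac12 d_k^{T}\bigl(\nabla^2 f(\xi_k)-B_k\bigr)d_k$ with $\xi_k$ on the segment $[x_k,x_k+d_k]$ and using continuity of $\nabla^2 f$ together with $B_k\to\nabla^2 f$, one gets $|f_k-f(x_k+d_k)-(q_k(0)-q_k(d_k))|=o(\|d_k\|^2)=o(\delta_k^2)$. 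Dividing by $\tfrac12 c_3\tau\,\delta_k^2$ then gives
\[
|r_k-1|\le\frac{o(\delta_k^2)}{\tfrac12 c_3\tau\,\delta_k^2}\longrightarrow0\qquad(\delta_k\to0),
\]
whence $r_k\ge\mu_2$ and therefore $\widehat r_k\ge r_k\ge\mu_2$ for small enough $\delta_k$; the step is then very successful and the radius is enlarged by $\rho_2\ge1$, which is incompatible with $\delta_k\to0$. This contradiction forces some limit point $x^*$ to satisfy $\nabla^2 f(x^*)\succeq0$.

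The main obstacle is exactly this matching $\delta_k^2$ order of the predicted reduction and of the Taylor remainder: the whole argument hinges on upgrading Lemma~\ref{l.dif} from $O(\|d_k\|^2)$ to $o(\|d_k\|^2)$, which is precisely what twice continuous differentiability of $f$ supplies and which is why (H4) plus $C^2$ smoothness suffices to push the first-order machinery to a second-order conclusion. A secondary point worth making explicit is the consistency assumption linking $B_k$ to $\nabla^2 f(x_k)$, without which the negative curvature of the true Hessian cannot be read off from $\lambda_{\min}(B_k)$ in (H4).
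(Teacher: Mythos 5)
Your proof is correct and takes essentially the same route as the paper, whose entire proof of this theorem is a citation to Theorem 3.4 of \cite{DenXZ}: the negative-curvature contradiction you develop (uniform predicted decrease $q_k(0)-q_k(d_k)\geq\tfrac{1}{2}c_3\tau\,\delta_k^2$, then $\delta_k\to 0$ via Corollary \ref{c.limtk}, then the sharpened $o(\delta_k^2)$ Taylor estimate forcing all sufficiently late steps to be very successful) is precisely the argument behind that cited result. You are also right to make explicit the two hypotheses the paper leaves implicit: consistency of $B_k$ with $\nabla^2 f(x_k)$ (without which (H4), phrased in terms of $\lambda_{\min}(B_k)$, implies nothing about $\nabla^2 f(x^*)$), and boundedness of the level set $L(x_0)$, which you use tacitly when asserting that the set of limit points is nonempty and compact.
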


%%%%%%%%%%%%%
\begin{proof}
The proof is similar to Theorem 3.4 in \cite{DenXZ}. \qed
\end{proof}

The next two results show that Algorithm 2 can be reduced to 
quasi-Newton or Newton methods, where the sequence
$\{x_k\}$ generated by these schemes can attain  
local superlinear and quadratic convergence rates
under some conditions, respectively. 

%%%%%%%%%%%%%%%%%%%%%%%%%
\begin{thm} \label{t.sup}
Suppose that (H1)--(H3) hold, and also suppose that the sequence $\{x_k\}$ is generated by Algorithm 2 converges to $x^*$, $\|d_k\|=\|-B_k^{-1}g_k\| \leq \delta_k$, $H(x)=\nabla^2 f(x)$ is continuous in a neighborhood $N(x^*,\varepsilon)$ of $x^*$, and $B_k$ satisfies 
\begin{equation} \label{e.sup}
\lim_{k\rightarrow \infty} \frac{\|[B_k-H(x^*)]d_k\|}
{\|d_k\|}=0.
\end{equation}
then \\
(i) there exists a constant $k_1$ such that for all $k \geq k_1$ we have $x_{k+1} = x_k + d_k$;\\
(ii) the sequence $\{x_k\}$ generated by Algorithm 2 converges to $x^*$ superlinearly.
\end{thm}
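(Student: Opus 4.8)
The plan is to exploit the fact that, under the stated hypotheses, the trial step is exactly the quasi-Newton step $d_k = -B_k^{-1} g_k$ whose length $\|d_k\| \leq \delta_k$ leaves the trust-region constraint inactive, so $d_k$ is the \emph{unconstrained} minimizer of $q_k$. First I would note that $x_k \to x^*$ together with Theorem \ref{t.sgolc} forces $g_k \to g(x^*) = 0$ and hence $d_k \to 0$. For this particular step the predicted reduction simplifies: since $g_k = -B_k d_k$, one has $-g_k^T d_k = d_k^T B_k d_k$ and therefore $q_k(0) - q_k(d_k) = \tfrac{1}{2} d_k^T B_k d_k$.

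For part (i) I would then show that the ordinary ratio $r_k$ of (\ref{e.rk}) tends to $1$. Expanding the actual reduction to second order, $f_k - f(x_k+d_k) = -g_k^T d_k - \tfrac{1}{2} d_k^T H(\xi_k) d_k$ for some $\xi_k$ on the segment $[x_k, x_k+d_k]$, and combining with the predicted reduction above gives
\[
|r_k - 1| = \frac{|d_k^T (B_k - H(\xi_k)) d_k|}{d_k^T B_k d_k}.
\]
The numerator I would bound by $\|d_k\|\big(\|(B_k - H(x^*)) d_k\| + \|H(x^*) - H(\xi_k)\|\,\|d_k\|\big)$, where the first term is $o(\|d_k\|)$ by the Dennis--Mor\'e condition (\ref{e.sup}) and the second is $o(\|d_k\|)$ since $\xi_k \to x^*$ and $H$ is continuous. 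Dividing by $d_k^T B_k d_k$, bounded below by a positive multiple of $\|d_k\|^2$, yields $r_k \to 1$. Because $r_k \to 1$ and $\widehat{r}_k \geq r_k$ (a consequence of $T_k \geq f_k$ in (\ref{e.ine})), there is an index $k_1$ with $\widehat{r}_k \geq \mu_1$ for all $k \geq k_1$, so the inner cycle is never entered and $x_{k+1} = x_k + d_k$, which is (i).

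For part (ii) I would carry out the classical Dennis--Mor\'e estimate. For $k \geq k_1$ we have $B_k d_k = -g_k$, so writing $g_{k+1} = (g_{k+1} - g_k) + g_k = (\bar H_k - B_k) d_k$ with $\bar H_k = \int_0^1 H(x_k + t d_k)\, dt$, I would split $\bar H_k - B_k = (\bar H_k - H(x^*)) + (H(x^*) - B_k)$ and conclude $\|g_{k+1}\| = o(\|d_k\|)$, again via continuity of $H$ and (\ref{e.sup}). The nonsingularity of $H(x^*)$ then gives $\|x_{k+1} - x^*\| \leq C \|g_{k+1}\| = o(\|d_k\|)$; inserting this into $\|d_k\| \leq \|x_{k+1} - x^*\| + \|x_k - x^*\|$ yields $\|d_k\| \leq 2\|x_k - x^*\|$ for large $k$, whence $\|x_{k+1} - x^*\| = o(\|x_k - x^*\|)$, i.e.\ superlinear convergence.

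The main obstacle is the two lower bounds that both quietly require $H(x^*)$ to be nonsingular (positive definite at the minimizer): the bound $d_k^T B_k d_k \geq c \|d_k\|^2$ in (i) and $\|g_{k+1}\| \geq c_1 \|x_{k+1} - x^*\|$ in (ii). I would obtain the first from $d_k^T B_k d_k = d_k^T H(x^*) d_k + d_k^T (B_k - H(x^*)) d_k \geq \lambda_{\min}(H(x^*)) \|d_k\|^2 - o(\|d_k\|^2)$ using (\ref{e.sup}), and the second from the mean-value representation of $g_{k+1}$; both rest on $\lambda_{\min}(H(x^*)) > 0$, an assumption that is implicit in the statement and that I would make explicit.
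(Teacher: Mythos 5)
Your proof is correct and shares the paper's two-stage skeleton---first show that the monotone ratio $r_k$ tends to $1$, so that $\widehat{r}_k \geq r_k$ (a consequence of $T_k \geq f_k$ in (\ref{e.ine})) forces acceptance of the full quasi-Newton step for all large $k$, then conclude superlinear convergence by a Dennis--Mor\'e argument---but the execution differs at both stages. In part (i) the paper never uses your identity $q_k(0)-q_k(d_k)=\tfrac{1}{2}d_k^T B_k d_k$: it gets $d_k \to 0$ from $d_k = -H(x^*)^{-1}g_k + o(\|d_k\|)$ together with Theorem \ref{t.sgolc}, and then bounds $|r_k-1|$ by the generic $O(\|d_k\|^2)$ model-error estimate of Lemma \ref{l.dif} divided by the Cauchy decrease (\ref{e.ins}), written there as $\beta\varepsilon\min\{\delta_k,\varepsilon/M\}$. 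That denominator tacitly assumes $\|g_k\| \geq \varepsilon$, which sits uneasily with the limit $g_k \to 0$ invoked in the same proof (the $\varepsilon$ is carried over from Lemma \ref{l.vers}, where it was a hypothesis); your sharper estimate---numerator $o(\|d_k\|^2)$ via (\ref{e.sup}) and continuity of $H$, denominator at least $\tfrac{1}{2}\bigl(\lambda_{\min}(H(x^*))-o(1)\bigr)\|d_k\|^2$---is exactly what is needed to make $r_k \to 1$ rigorous, at the price of invoking positive definiteness of $H(x^*)$ explicitly. In part (ii) the paper merely verifies the Dennis--Mor\'e condition with respect to $H_k$ and then cites Theorem 3.6 of \cite{NocW}, whereas you reprove that theorem from scratch via the representation $g_{k+1} = (\bar{H}_k - B_k)d_k$; your route is self-contained, the paper's is shorter. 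Finally, the nonsingularity (indeed positive definiteness) of $H(x^*)$ that you flag as an implicit hypothesis is equally hidden in the paper's argument---it inverts $H(x^*)$ in (i), and the cited theorem of \cite{NocW} assumes $\nabla^2 f(x^*)$ positive definite---so making it explicit, as you do, is the right call rather than an extra restriction of the theorem.
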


%%%%%%%%%%%%%
\begin{proof}
(i) The condition (\ref{e.sup}) implies
\begin{equation}\label{e:39}
\lim_{k\rightarrow\infty}\frac{\|g_k+H(x^*)d_k\|}{\|d_k\|}=0,
\end{equation}
leading to
\begin{equation*}
d_k=-H(x^*)^{-1} g_k+o(\|d_k\|).
\end{equation*}
This implies that
\begin{equation} \label{e.dk}
\|d_k\|\leq \|H(x^*)^{-1}\|~ \|g_k\|+o(\|d_k\|).\end{equation}
Theorem \ref{t.sgolc} implies that $\|g_k\| \rightarrow 0$, as $k\rightarrow \infty$. This and (\ref{e.dk}) give
\begin{equation}\label{e:40}
\lim_{k\rightarrow \infty} \|d_k\|=0.
\end{equation}
This, (\ref{e.ine}), and (H2) imply
\begin{equation*}\begin{split}
|r_k-1|&=\left|\frac{f_k-f(x_k+d_k)}{q_k(0)-q_k(d_k)}-1\right|=\left|\frac{f_k-f(x_k+d_k)-(q_k(0)-q_k(d_k))}{q_k(0)-q_k(d_k)}\right|\\
&\leq \frac{O(\|d_k\|^2)}{\beta \varepsilon~ \min\left\{\delta_k,\varepsilon/M \right\}} \leq \frac{O(\|d_k\|^2)}{\beta \varepsilon~ \min\left\{\|d_k\|_k,\varepsilon/M \right\}} \rightarrow 0 ~~~ (k\rightarrow\infty).
\end{split}\end{equation*}
This clearly implies that there exists a positive integer $k_1$ such that for $k \geq k_1$ we have $x_{k+1} = x_k + d_k$.

(ii) From $d_k = - B_k^{-1} g_k$, we obtain
\[
\frac{\|g_k+H_k d_k\|}{\|d_k\|} = \frac{\|[H_k- B_k]d_k\|}{\|d_k\|} \leq \frac{\|[H_k-H(x^*)]d_k\|}{\|d_k\|} + \frac{\|[B_k-H(x^*)]d_k\|}{\|d_k\|}.
\]
This and (\ref{e:28}) lead to 
\begin{equation}\label{e:355558}
\lim_{k\rightarrow \infty} \frac{\|g_k+H_k d_k\|}{\|d_k\|}=0.
\end{equation}
Now Theorem 3.6 in \cite{NocW} implies that $\{x_k\}$ generated by Algorithm 2 converges to $x^*$ superlinearly. \qed
\end{proof}
%%%%%%%%%%%

Notice that if $f$ is thrice continuously differentiable 
and the upper level set $L(x_0)$ is bounded, then (H1) implies that $\|\nabla^3f(x)\|$ is uniformly continuous and bounded on the open bounded convex set $\Omega$ involving $L(x_0)$. Hence, by using the mean value theorem, there exists a constant $L>0$ such that $\|\nabla ^3f(x)\|\leq L$ implying
\begin{equation} \label{e.lip}
\|H(x)-H(y)\|\leq L\|x-y\|, 
\end{equation}
for all $x, y \in \Omega$. This implies that Hessian of $f$ is Lipschitz continuous. This condition can guarantee the quadratic convergence of the sequence $\{x_k\}$ generated by Algorithm 2. The details are summarized in the next result.

%%%%%%%%%%%%%%%%%%%%%%%%%%
\begin{thm} \label{t.quad}
Suppose that $f(x)$ is a twice continuously differentiable function on $\mathbb{R}^n$, and all assumptions of Theorem \ref{t.sgolc} hold. If $\|d_k\| = \|-H_k^{-1}g_k\| \leq \delta_k$, and there exists a neighborhood $N(x^*,\epsilon)$ of $x^*$ such that $H(x)$ is Lipschitz continuous on $N(x^*,\epsilon)$, i.e., there exists $L$ such that
\begin{equation}\label{e:42}
\|H(x)-H(y)\| \leq L \|x-y\|,
\end{equation}
then \\
(i) there exists a constant $k_2$ such that for all $k \geq k_2$ we have $x_{k+1} = x_k + d_k$;\\
(ii) the sequence $\{x_k\}$ generated by Algorithm 2 converges to $x^*$ quadratically.
\end{thm}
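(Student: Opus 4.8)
The plan is to mirror the structure of Theorem~\ref{t.sup}: first show that, asymptotically, the full (now exact Newton) step $d_k = -H_k^{-1}g_k$ is always accepted without triggering the inner cycle, so that Algorithm~2 eventually reduces to the pure Newton iteration $x_{k+1} = x_k - H_k^{-1}g_k$; then establish the classical quadratic-convergence estimate for Newton's method under a Lipschitz Hessian.

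For part (i), I would argue as follows. Since all hypotheses of Theorem~\ref{t.sgolc} hold, $x_k \to x^*$ and $\|g_k\| \to 0$. Near the limit point $H(x^*)$ is invertible (it is positive semidefinite by Theorem~\ref{t.scon}; for a genuine attractor of Newton's method one needs nonsingularity), so by continuity $H_k = H(x_k)$ is invertible with $\|H_k^{-1}\|$ bounded for all large $k$, whence $\|d_k\| \leq \|H_k^{-1}\|\,\|g_k\| \to 0$ as in (\ref{e:40}). Because $H_k$ is positive definite for large $k$ and, by hypothesis, $\|d_k\| \leq \delta_k$, the Newton step is the unconstrained minimizer of $q_k$ and is feasible, hence it is the solution of the subproblem (\ref{e.sub}); moreover $q_k(0) - q_k(d_k) = \tfrac12 g_k^T H_k^{-1} g_k = \Theta(\|d_k\|^2)$. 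The crucial step is to sharpen the discrepancy estimate of Lemma~\ref{l.dif}: inserting the Lipschitz bound (\ref{e:42}) into a Taylor expansion of $f$ about $x_k$ gives $|f_k - f(x_k+d_k) - (q_k(0)-q_k(d_k))| = O(\|d_k\|^3)$, a full order better than the generic $O(\|d_k\|^2)$. Dividing by the $\Theta(\|d_k\|^2)$ predicted reduction yields $|r_k - 1| = O(\|d_k\|) \to 0$, so $r_k \geq \mu_2$ for $k$ large. Since Lemma~\ref{lem1} gives $T_k \geq f_k$, we have $\widehat{r}_k \geq r_k \geq \mu_1$, so the trial point is accepted at the first trial and the inner cycle is never entered; thus there is $k_2$ with $x_{k+1} = x_k + d_k$ for all $k \geq k_2$.

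For part (ii), once $k \geq k_2$ the iteration is exactly $x_{k+1} = x_k - H_k^{-1}g_k$, and I would run the textbook Newton estimate. Writing $g(x^*) = 0$ and using the integral form of the mean value theorem, $x_{k+1} - x^* = H_k^{-1}\bigl[H_k(x_k - x^*) - (g_k - g(x^*))\bigr] = H_k^{-1}\int_0^1 \bigl[H(x_k) - H(x^* + t(x_k - x^*))\bigr](x_k - x^*)\,dt$. The Lipschitz bound (\ref{e:42}) dominates the integrand by $L(1-t)\|x_k - x^*\|$, so the integral is at most $\tfrac{L}{2}\|x_k - x^*\|^2$, giving $\|x_{k+1} - x^*\| \leq \tfrac{L}{2}\|H_k^{-1}\|\,\|x_k - x^*\|^2$. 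As $\|H_k^{-1}\| \leq 2\|H(x^*)^{-1}\|$ for $k$ large, this is the desired quadratic rate. One may equivalently cite the standard convergence theorem, in the spirit of the analogue of Theorem~3.6 in \cite{NocW} invoked in the proof of Theorem~\ref{t.sup}.

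I expect the only genuine obstacle to be the order-three discrepancy estimate in part (i): Lemma~\ref{l.dif} as stated only delivers $O(\|d_k\|^2)$, which is \emph{not} enough to force $|r_k - 1| \to 0$ once the predicted reduction is itself $\Theta(\|d_k\|^2)$; it is precisely the Lipschitz hypothesis (\ref{e:42}) that upgrades the remainder to $O(\|d_k\|^3)$ and thereby guarantees eventual acceptance of the full Newton step. Everything after that acceptance is the classical quadratic estimate.
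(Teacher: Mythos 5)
Your proof is correct (modulo the nonsingularity of $H(x^*)$, which you rightly flag and which is equally implicit in the paper), but it takes a more self-contained route than the paper, which disposes of the theorem in two lines: for (i) the paper simply invokes Theorem \ref{t.sup} with $B_k$ replaced by $H_k$ --- the Dennis--Mor\'{e} condition (\ref{e.sup}) then holds automatically, since $\|[H_k-H(x^*)]d_k\|/\|d_k\| \leq \|H_k-H(x^*)\| \rightarrow 0$ by continuity of $H$ and $x_k \rightarrow x^*$ --- and for (ii) it cites the standard local Newton result, Theorem 3.5 of \cite{NocW}, rather than reproving the integral mean-value estimate as you do. The paper's reduction buys brevity and reuse of an already-proved result; your route buys rigor. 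In particular, your observation about the remainder term is well taken: the paper's own proof of Theorem \ref{t.sup}(i), on which its proof of part (i) here rests, bounds $|r_k-1|$ by dividing the generic $O(\|d_k\|^2)$ of Lemma \ref{l.dif} by $\beta\varepsilon\min\{\delta_k,\varepsilon/M\}$, a bound that presupposes $\|g_k\|\geq\varepsilon>0$ even though $\|g_k\|\rightarrow 0$ in that setting; your upgrade of the numerator to $O(\|d_k\|^3)$ via the Lipschitz bound (\ref{e:42}), combined with the observation that the Newton step's predicted reduction $\tfrac{1}{2}g_k^T H_k^{-1}g_k$ is $\Theta(\|d_k\|^2)$, is exactly what makes the acceptance argument airtight, and in effect repairs the weak link in the chain of results the paper relies on.
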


%%%%%%%%%%%%%
\begin{proof}
(i) By replacing $B_k$ by $H_k$ in Theorem \ref{t.sup}, we obtain that there exists an integer $k_2>0$ such that
\[
x_{k+1} = x_k - H_k^{-1}g_k,
\]
for all $k \geq k_1$. 

(ii) The condition described in (i) and Theorem 3.5 in \cite{NocW} give the results. \qed
\end{proof}
%%%%%%%%%%%

% ################################################
% ################################################
\section{Numerical experiments}
In this section we report numerical results for Algorithm 2 equipped with two novel nonmonotone terms proposed in Section 2 for solving unconstrained optimization problems. In our experiments we use several version of Algorithm 2 employing state-of-the-art nonmonotone terms. In details, we consider
{\renewcommand{\labelitemi}{$\bullet$} 
\begin{itemize}
\item NMTR-G: Algorithm 2 with the nonmonotone term of {\sc Grippo} et al. \cite{GriLL1};
\item NMTR-H: Algorithm 2 with the nonmonotone term of {\sc Zhang \& Hager} \cite{ZhaH};
\item NMTR-N: Algorithm 2 with the nonmonotone term of {\sc Amini} et al. \cite{AmiAN};
\item NMTR-M: Algorithm 2 with the nonmonotone term of {\sc Ahookhosh} et al. \cite{AhoAB};
\item NMTR-1: Algorithm 2 with the nonmonotone term (\ref{e.tk1});
\item NMTR-2: Algorithm 2 with the nonmonotone term (\ref{e.tk2}).
\end{itemize} 
In the experiments we used 112 test problems of the \textsf{CUTEst} test collections \cite{GolOT} from dimension 2 to 5000, where we ignore test problems with the dimension greater than 5000. All of the codes are written in MATLAB using the same subroutine, and they are tested on 2Hz core i5 processor laptop with 4GB of RAM with the double-precision data type. The initial points are standard ones proposed in \textsf{CUTEst}. All the algorithms use the radius
\begin{equation*}
\delta_{k+1}= \left\{
\begin{array}{ll}
c_1\|d_k\| & ~~\mathrm{if}~ \widehat{r}_k < \mu_1,\\
\delta_k& ~~\mathrm{if}~ \mu_1 \leq \widehat{r}_k < \mu_2, \\
\max \{\delta_k,c_2 \|d_k\| \} & ~~\mathrm{if}~ \widehat{r}_k \geq \mu_2, \\
\end{array}  \right. \\
\end{equation*}
where 
\begin{equation*}
\mu_{1}=0.05,~\mu_{2} = 0.9,~ c_1=0.25,~ c_2=2.5,~ \delta_{0} =0.1\Vert gk \Vert, 
\end{equation*}
see \cite{GolOST}. In the model $q_k$ (\ref{e.qk}), an approximation for Hessian is generated by the BFGS updating formula
\begin{equation*} B_{k+1}=B_k + \frac{y_k y_k^T}{s_k^T y_k}-\frac{B_k s_k s_k^T B_k}{s_k^T B_k s_k}, \end{equation*}
where $s_k=x_{k+1}-x_k$ and $y_k=g_{k+1}-g_k$.
For NMTR-G, NMTR-N, NMTR-1 and NMTR-2, we set $N = 10$. As discussed in \cite{ZhaH}, NMLS-H uses $\eta_k = 0.85$.  On the basis of our experiments, we update the parameter $\eta_k$ by 
\begin{equation*}\label{e.eta} 
\eta_{k}=\left\{ 
\begin{array}{ll} 
\eta_0/2   \vspace{4mm}    &  ~~ \mathrm{if}~ k=1, \\ 
(\eta_{k-1}+\eta_{k-2})/2  &  ~~ \mathrm{if}~ k\geq 2,
\end{array} \right.
\end{equation*} 
for NMTR-N, NMTR-M, NMTR-1 and NMTR-2, where the parameter $\eta_0$ will be tuned to get a better performance. To solve the quadratic subproblem (\ref{e.sub}), we use the Steihaug-Toint scheme \cite{ConGT} (Chapter 7, Page 205) where the scheme is terminated if
\begin{equation*}
\|g(x_k+d)\| \leq \min\left\{1/10, \|g_k\|^{1/2}\right\}\|g_k\|~~~ \textrm{or}~~~
\|d\|=\delta_k.
\end{equation*}
In our experiments the algorithms are stopped whenever the total number of iterations exceeds 10000 or     
\begin{equation} 
\|g_k\| < \varepsilon 
\end{equation} 
holds with the accuracy parameter $\varepsilon = 10^{-5}$. 

To compare the results appropriately, we use the performance profiles of {\sc Dolan \& Mor\'{e}} in \cite{DolM}, where the measures of performance are the number of iterations ($N_i$), the number of function evaluations ($N_f$), and the number of gradient evaluations ($N_g$). In the algorithms considered, the number of iterations and gradient evaluations are the same, so we only consider the performance of gradients. It is believed that the computational cost of a gradient is as much as the computational cost three function values, i.e., we further consider the measure $N_f+3 N_g$. The performance of each code is measured by considering the ratio of its computational outcome versus the best numerical outcome of all codes. This profile offers a tool for comparing the performance of iterative schemes in a statistical structure. Let $\mathcal{S}$ be a set of all algorithms and $\mathcal{P}$ be a set of test problems. For each problem $p$ and solver $s$,  $t_{p,s}$ is the computational outcome regarding to the performance index, which is used in defining the performance ratio 
\begin{equation}\label{e.perf}
r_{p,s}=\frac{t_{p,s}}{\min \{ t_{p,s}: s\in \mathcal{S}\}}.
\end{equation} 
If an algorithm $s$ is failed to solve a problem $p$, the procedure sets $r_{p,s}=r_{\mathrm{failed}}$, where $r_{\mathrm{failed}}$ should be strictly larger than any performance ratio (\ref{e.perf}). For any factor $\tau$, the overall performance of an algorithm $s$ is given by 
\begin{equation*} 
\rho_s(\tau)=\frac{1}{n_p} \textrm{size}\{p \in \mathcal{P}: r_{p,s} \leq \tau\}.  
\end{equation*} 
In fact $\rho_s(\tau)$ is the probability that a performance ratio $r_{p,s}$ of the algorithm $s\in \mathcal{S}$ is within a factor $\tau \in \mathbb{R}^n$ of the best possible ratio. The function 
$\rho_s(\tau)$ is a distribution function for the performance ratio. In particular, $\rho_s(1)$ gives the probability that the algorithm $s$ wins over all other considered algorithms, and $\lim_{\tau\rightarrow 
r_{\mathrm{failed}}}\rho_s(\tau)$ gives the probability of that the algorithm $s$ 
solve all considered problems. Hence the performance profile can be considered as a measure of efficiency for comparing iterative schemes. In Figures \ref{fpar} and \ref{fnon}, the x-axis shows the number $\tau$ while 
the y-axis inhibits $P(r_{p,s} \leq \tau: 1 \leq s \leq n_s)$.

% ################################################
\subsection{{\bf Experiments with highly nonlinear problems}}
In this subsection we give some numerical results regarding 
the implementation of NMTR-1 and NMTR-2 compared with
TTR on some two-dimensional 
highly nonlinear problems involving a curved narrow valley.
More precisely, we consider  
the Nesterov-Chebysheve-Rosenbrock, Maratos, and NONDIA 
functions, see, for example, \cite{And}. In Example 1 the 
Nesterov-Chebysheve-Rosenbrock function is given, and 
the Maratos and NONDIA functions are given by
\[
\begin{array}{l}
\D f(x_1,x_2) =x_1 + \theta_1 ( x_1^2 + x_2^2 - 1 )^ 2
~~~ (\mathrm{Maratos~ function})\\\\
\end{array}
\]
and
\[
\D f(x_1,x_2) = (1-x_2)^2 + \theta_2( x_1 - x_2^2)^2
~~~(\mathrm{NONDIA~ function}),
\]
respectively, where we consider $\theta_1 = 10$ and $\theta_2 = 100$.

We solve the problem (\ref{e.func}) for these three
functions using TTR, NMTR-1, and NMTR-2, and the
results regarding the number of iterations and 
function evaluations are summarized in Table \ref{table1}. 
To give a clear view of the behaviour of TTR, NMTR-1, 
and NMTR-2, we depict the contour plot of the considered
functions and iterations obtained by the algorithms in Figure 
\ref{fcon2} (a), (c), and (e). In all three cases, one
can see that NMTR-1 and NMTR-2 need less iterations and
function values compared with TTR to solve the problem.
Moreover, TTR behaves monotonically and follows the bottom 
of the associated valley, while NMTR-1 and NMTR-2 fluctuated in the
valley.  

%%%%%%%%%%%%%%%%%%%%%%%%%%
\begin{figure} 
 \centering 
 \subfloat[][Nes-Cheb-Rosen contour plot \& iterations]
 {\includegraphics[width=7.7cm]{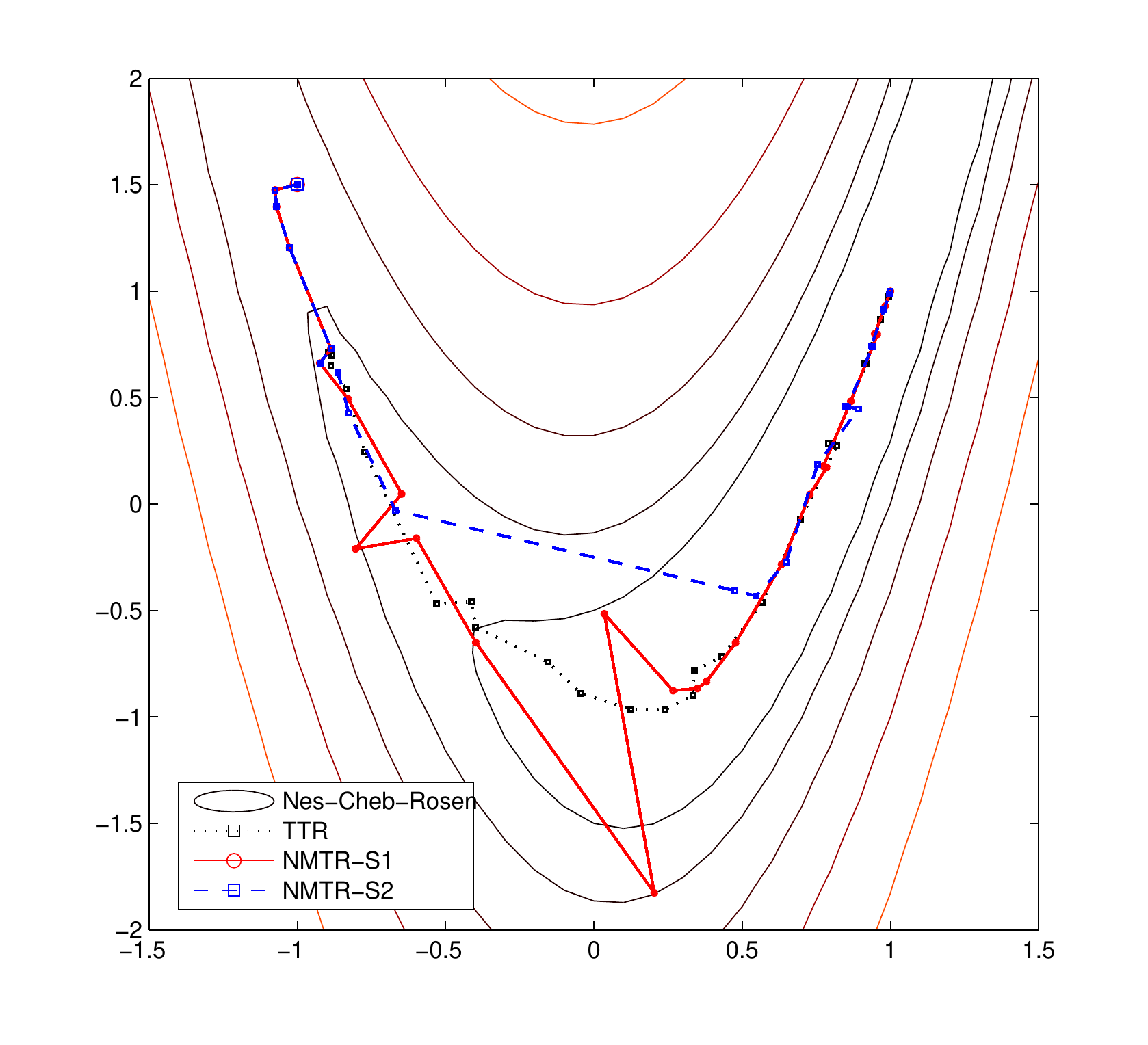}}% 
 \qquad 
 \subfloat[][function values versus iterations]
 {\includegraphics[width=7.7cm]{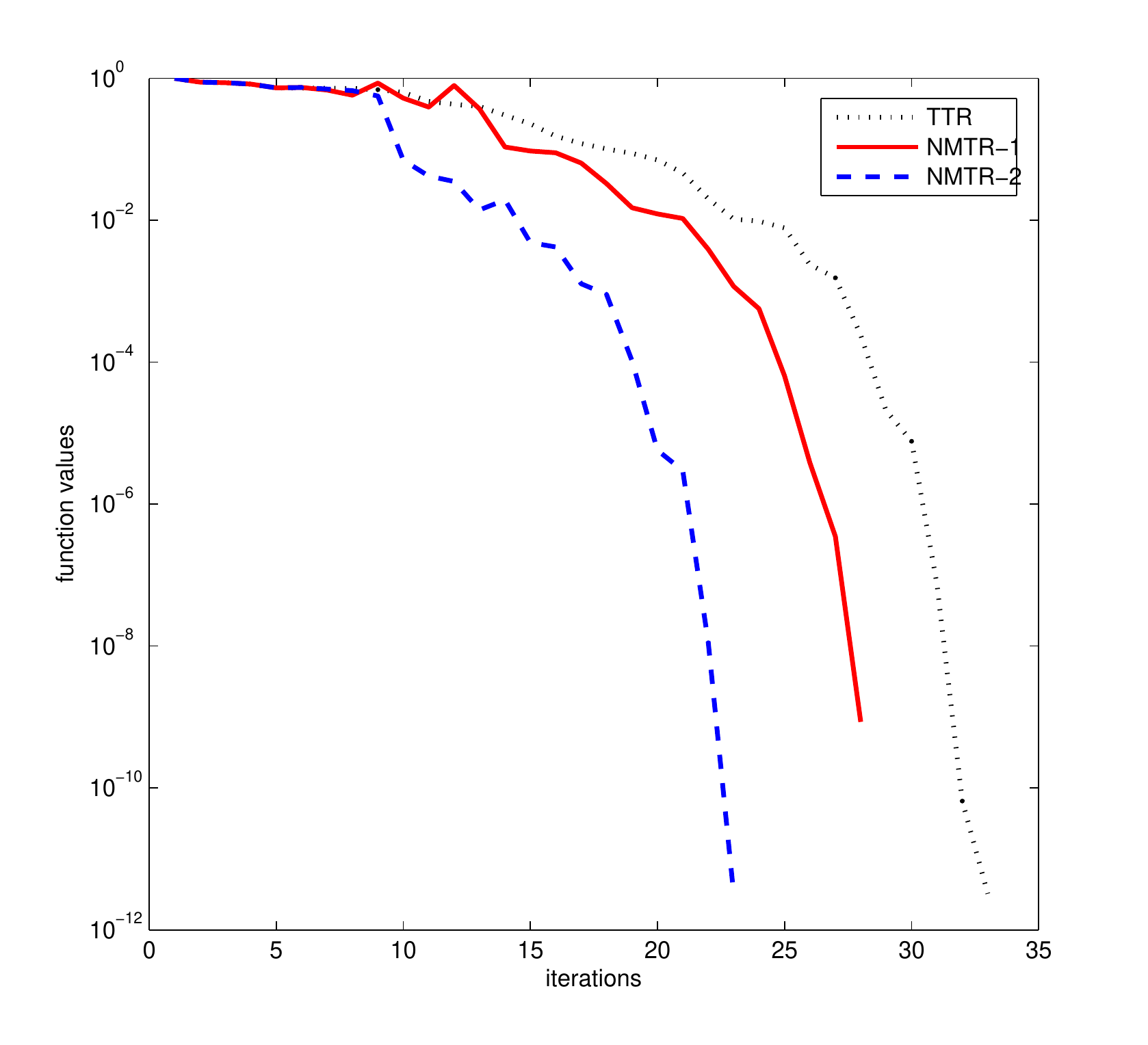}} 
 \qquad 
 \subfloat[][Maratos contour plot \& iterations]
 {\includegraphics[width=7.7cm]{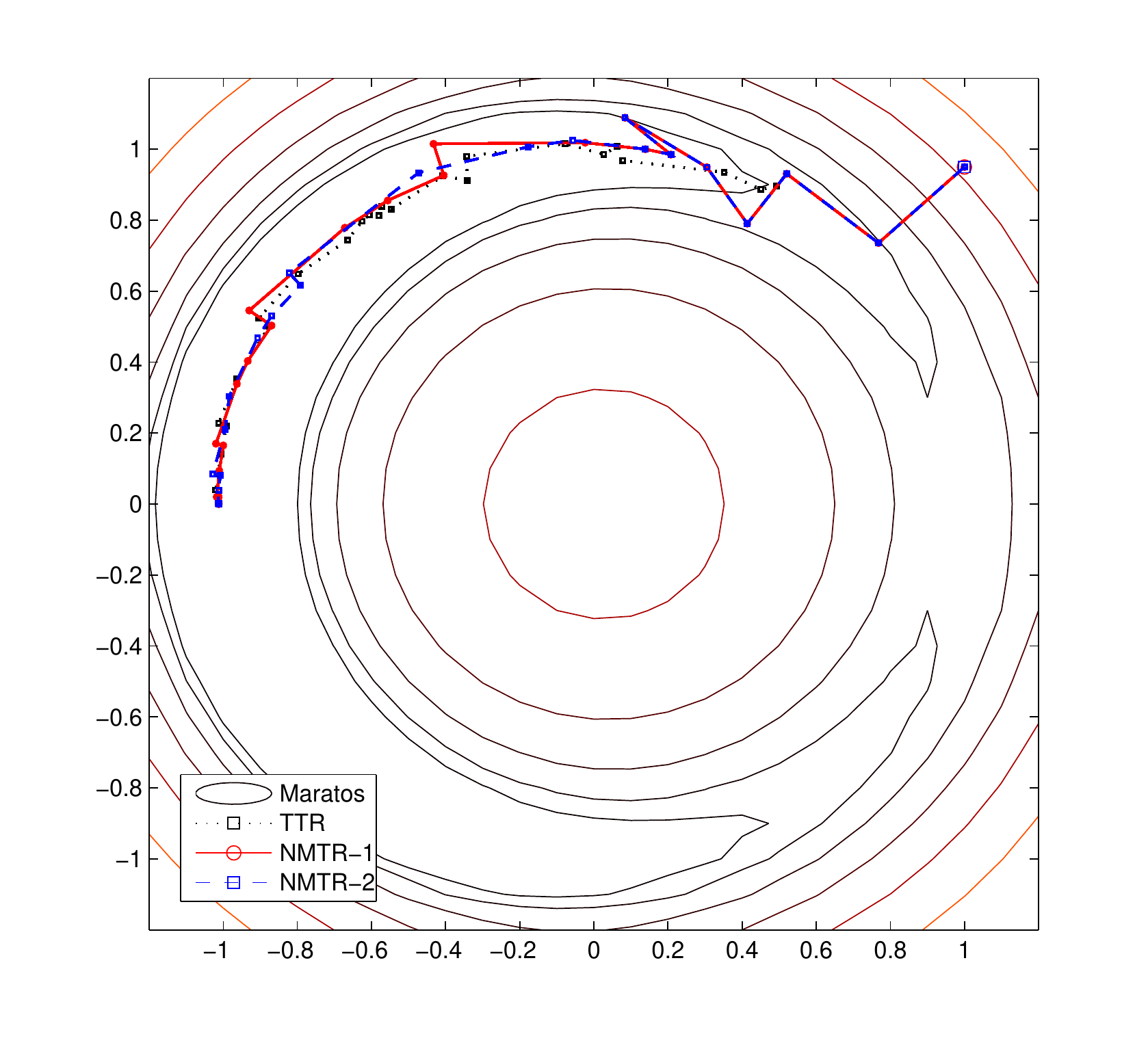}}% 
 \qquad 
 \subfloat[][function values versus iterations]
 {\includegraphics[width=7.7cm]{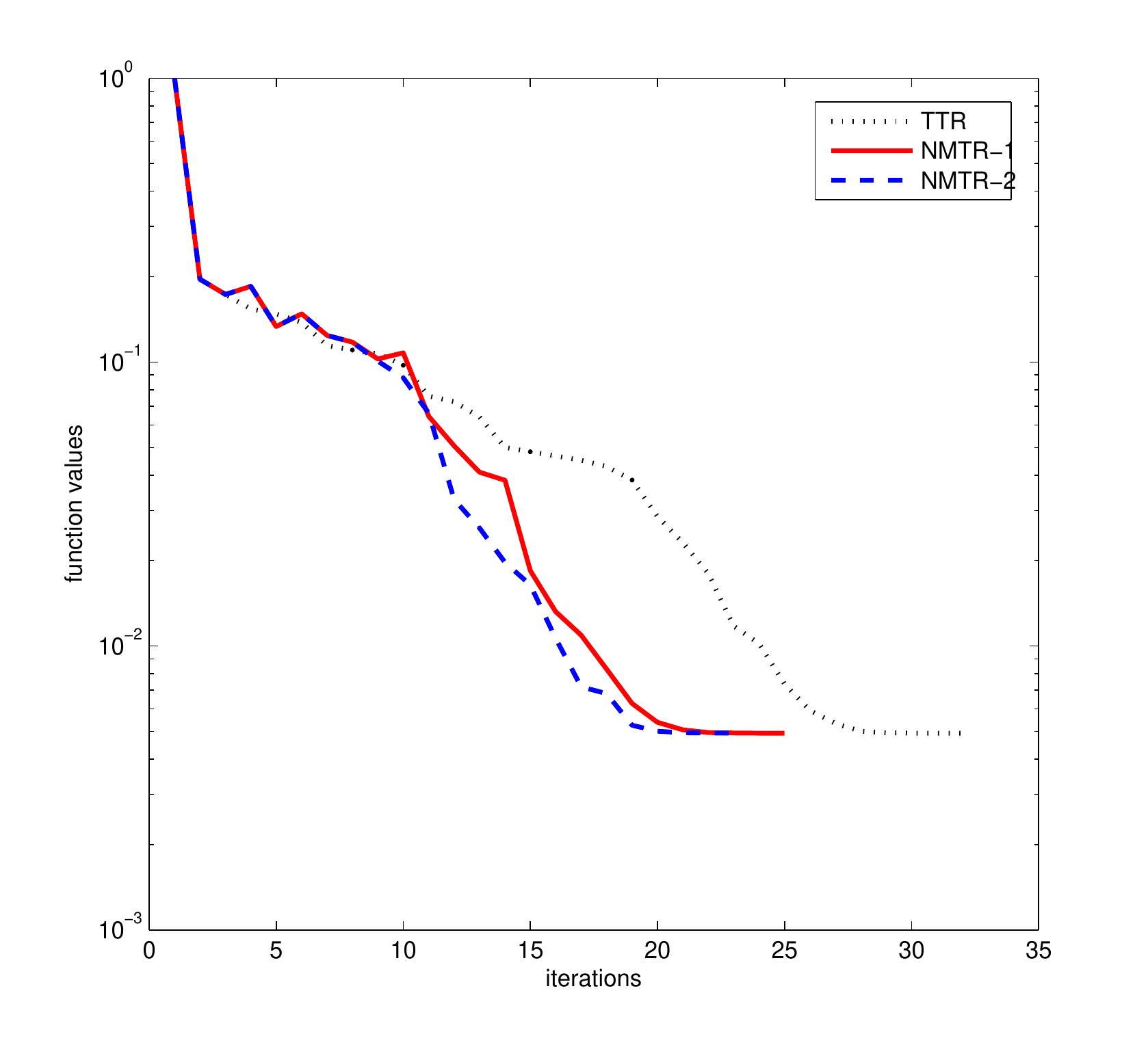}} 
 \qquad 
 \subfloat[][NONDIA contour plot \& iterations]
 {\includegraphics[width=7.7cm]{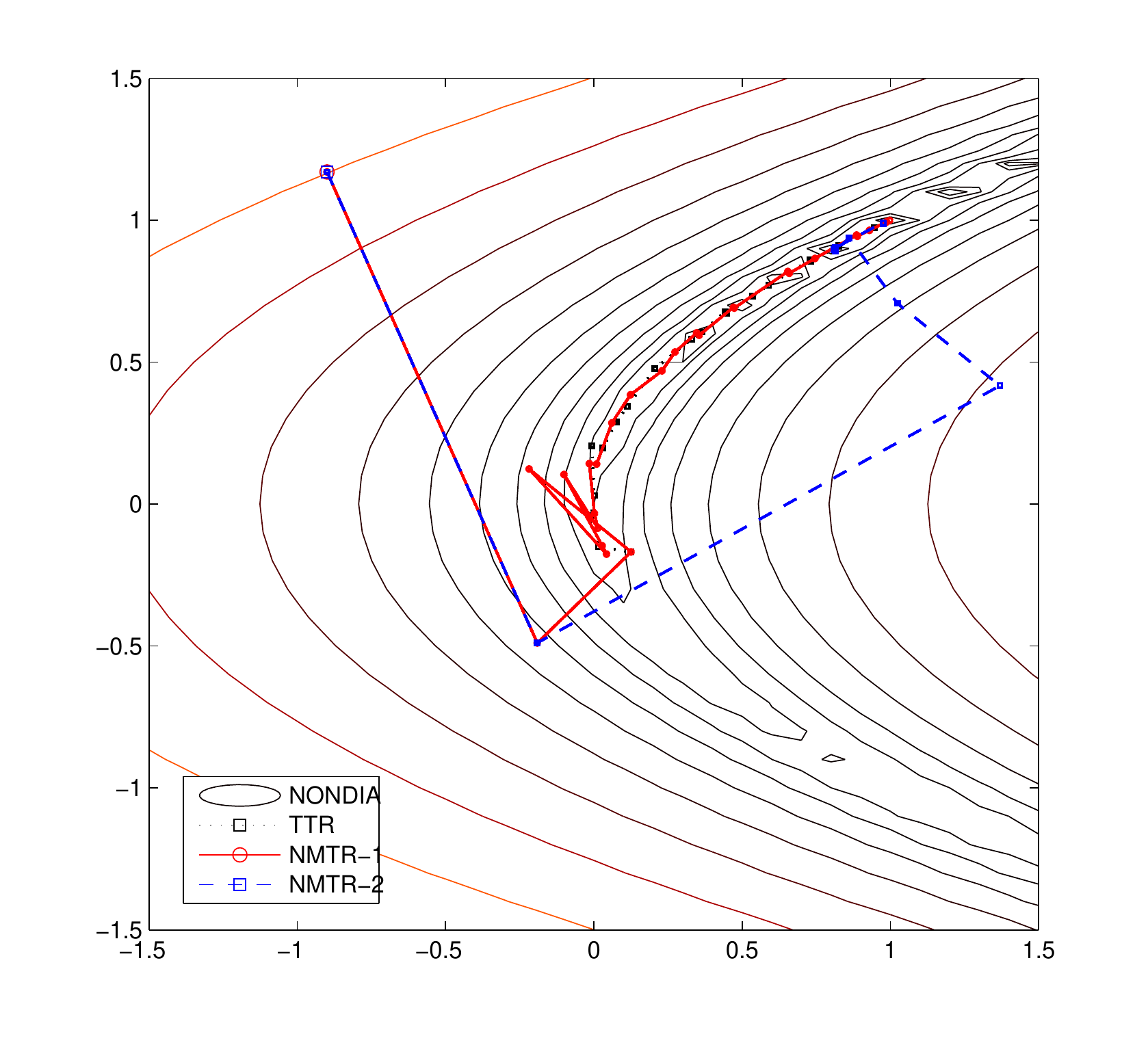}}% 
 \qquad 
 \subfloat[][function values versus iterations]
 {\includegraphics[width=7.7cm]{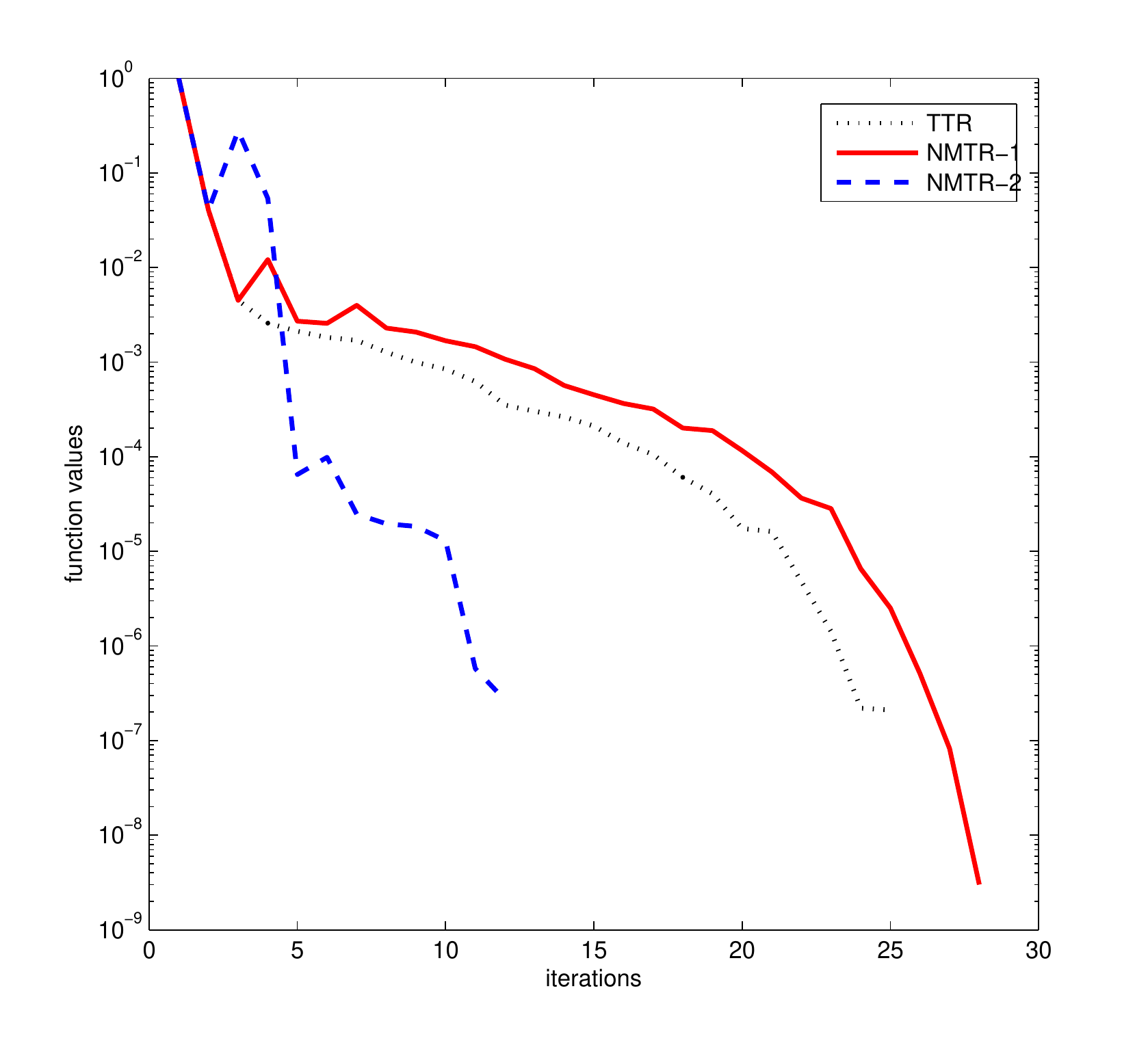}}% 
 \qquad 
 \caption{A comparison among NMTR-1, MNTR-2, and TTR: Subfigures (a), (c), 
 and (e) respectively illustrate the contour plots of the 
 two-dimensional Nesterov-Chebysheve-Rosenbrock, Maratos, 
 and NONDIA functions and iterations of NMTR-1, MNTR-2, 
 and TTR; Subfigures (b), (d), and (f) show the diagram of function 
 values versus iterations.} 
\label{fcon2}
\end{figure}
%%%%%%%%%%%%

%%%%%%%%%%%%%%%%%%%%%%%%%%%%%%%%%%
\renewcommand{\arraystretch}{1.2}  
\begin{longtable}{lllllllllll} 
\multicolumn{5}{l} 
{Table 1. Numerical results for highly nonlinear problems}
\label{table1} \\ 
\cmidrule{1-9} 
\multicolumn{1}{l}{Problem name} &\multicolumn{1}{l}{Dim}  
&\multicolumn{1}{l}{Initial point}    
&\multicolumn{1}{l}{TTR}    &\multicolumn{1}{l}{} 
&\multicolumn{1}{l}{NMTR-1} &\multicolumn{1}{l}{} 
&\multicolumn{1}{l}{NMTR-2} &\multicolumn{1}{l}{} \\ 
\cmidrule(lr){4-5} \cmidrule(lr){6-7} \cmidrule(lr){8-9}  
\multicolumn{1}{l}{}        &\multicolumn{1}{l}{} 
&\multicolumn{1}{l}{}  
&\multicolumn{1}{l}{$N_g$}   &\multicolumn{1}{l}{$N_f$} %&\multicolumn{1}{l}{$\mathrm{T}$}
&\multicolumn{1}{l}{$N_g$}   &\multicolumn{1}{l}{$N_f$}
&\multicolumn{1}{l}{$N_g$}   &\multicolumn{1}{l}{$N_f$} 
\\ %&\multicolumn{1}{l}{$\mathrm{T}$} \\ 
\cmidrule{1-9} 
\endfirsthead 
\multicolumn{5}{l}% 
{Table 1. Numerical results (\textit{continued})}\\[5pt] 
\hline 
\endhead 
\hline 
\endfoot 
\endlastfoot 
Nes-Cheb-Rosen &2&(-1, 1.5)   &32&41&27&34&22&29\\ 
Maratos        &2&(1, 0.95)   &31&40&24&29&22&29\\ 
NONDIA         &2&(-0.9, 1.17)&24&34&27&34&11&17\\
\hline
\end{longtable}  
%%%%%%%%%%%%%%%

% ################################################
\subsection{{\bf Experiments with \textsf{CUTEst} test problems}}
In this subsection we give numerical results regarding experiments 
with NMTR-1 and NMTR-2 on the \textsf{CUTEst} test problems 
compared with NMTR-G, NMTR-H, NMTR-N, and NMTR-M.

To get a better performance from NMTR-1 and NMTR-2, we tune the parameter $\eta_0$ by testing several fixed values of $\eta_0$ for both algorithms, where we use  $\eta_0=0.15, 0.25, 0.35, 0.45$. The corresponding versions of the algorithms NMTR-1 and NMTR-2 are denoted by NMTR-1-0.15, NMTR-1-0.25, NMTR-1-0.35, NMTR-1-0.45, NMTR-2-0.15, NMTR-2-0.25, NMTR-2-0.35, and NMTR-2-0.45, respectively. The results are summarized in Figure \ref{fpar} for three measures: the number function evaluations; the number gradient evaluations; the mixed measure $N_f + 3 N_g$. In Figure \ref{fpar}, subfigures (a), (c) and (e) illustrate that the results of NMTR-1, where it produces the best results with $\eta_{0} = 0.25$. From subfigures (b), (d), and (f) of Figure \ref{fpar}, it can be seen that the best results are produced by $\eta_{0}=0.45$. Hence for NMTR-1 we use $\eta_{0} = 0.25$ and for NMTR-2 use $\eta_{0} = 0.45$ in the remainder of our experiments.

%%%%%%%%%%%%%%%%%%%%%%%%%%
\begin{figure}\label{com1} 
 \centering 
 \subfloat[][$N_i$ and $N_g$ performance profile (NMTR-1)]{\includegraphics[width=7.7cm]{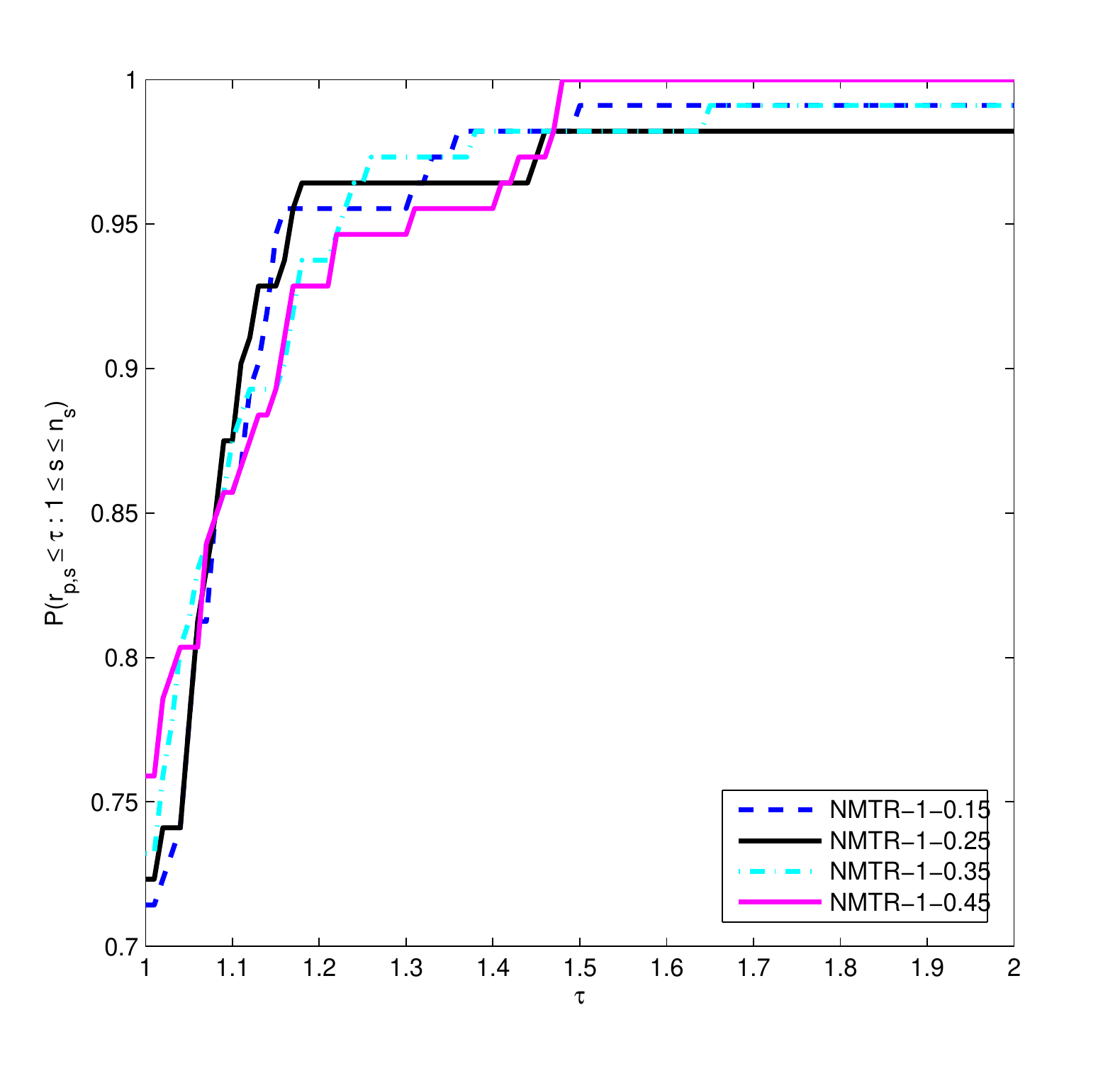}}% 
 \qquad 
 \subfloat[][$N_i$ and $N_g$ performance profile (NMTR-2)]{\includegraphics[width=7.7cm]{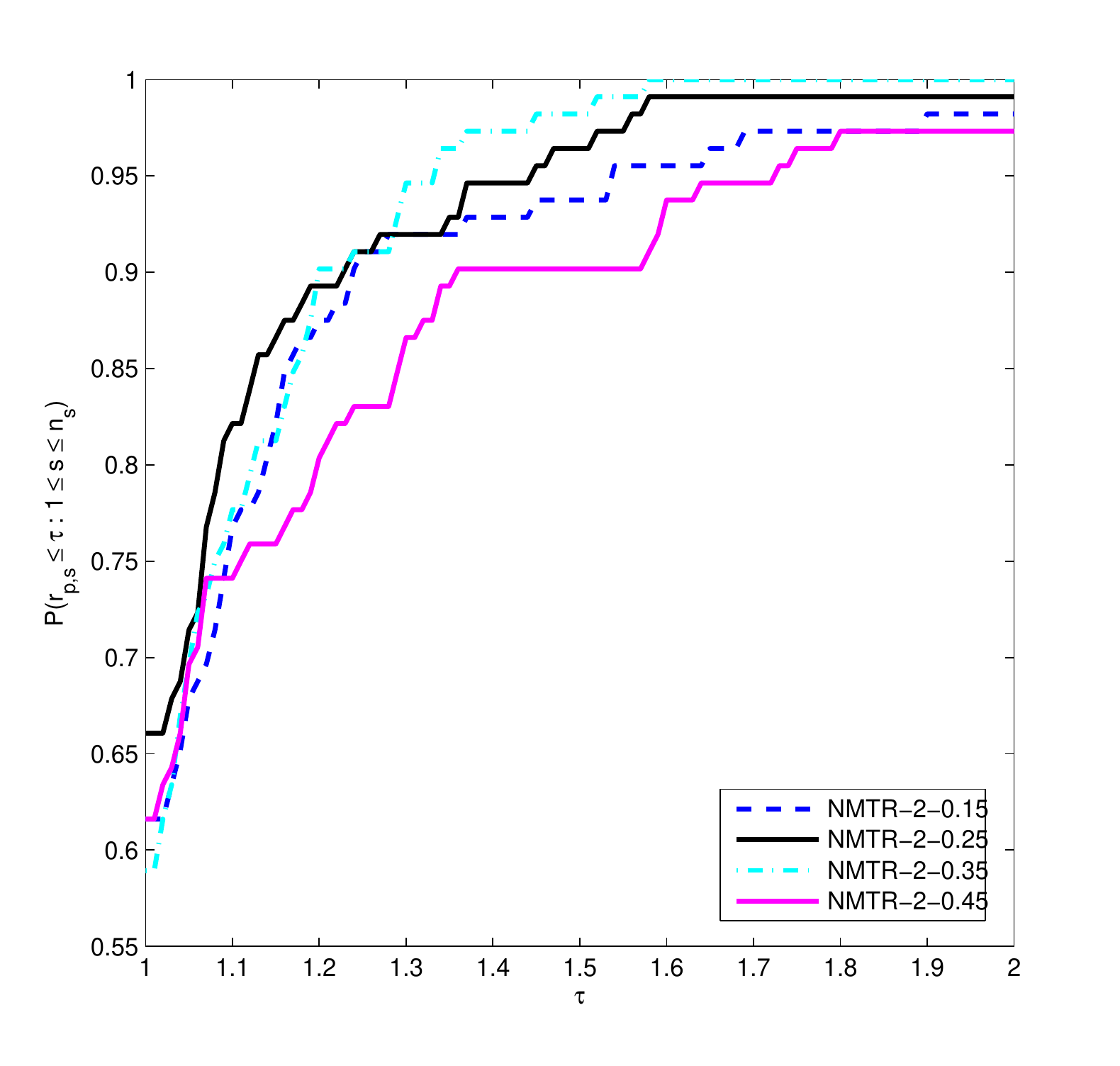}} 
 \qquad 
 \subfloat[][$N_f$ performance profile (NMTR-1)]{\includegraphics[width=7.7cm]{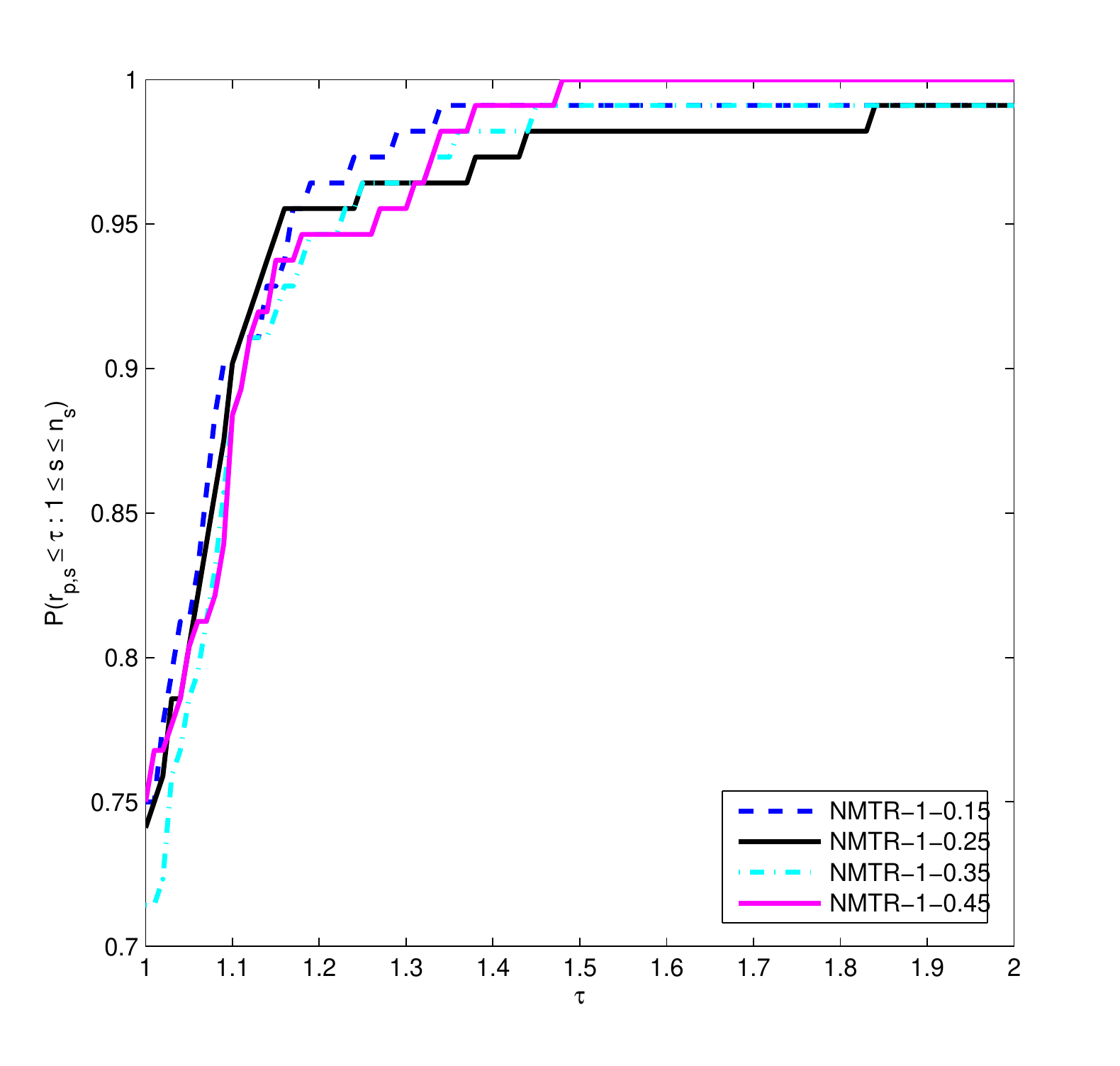}}% 
 \qquad 
 \subfloat[][$N_f$ performance profile (NMTR-2)]{\includegraphics[width=7.7cm]{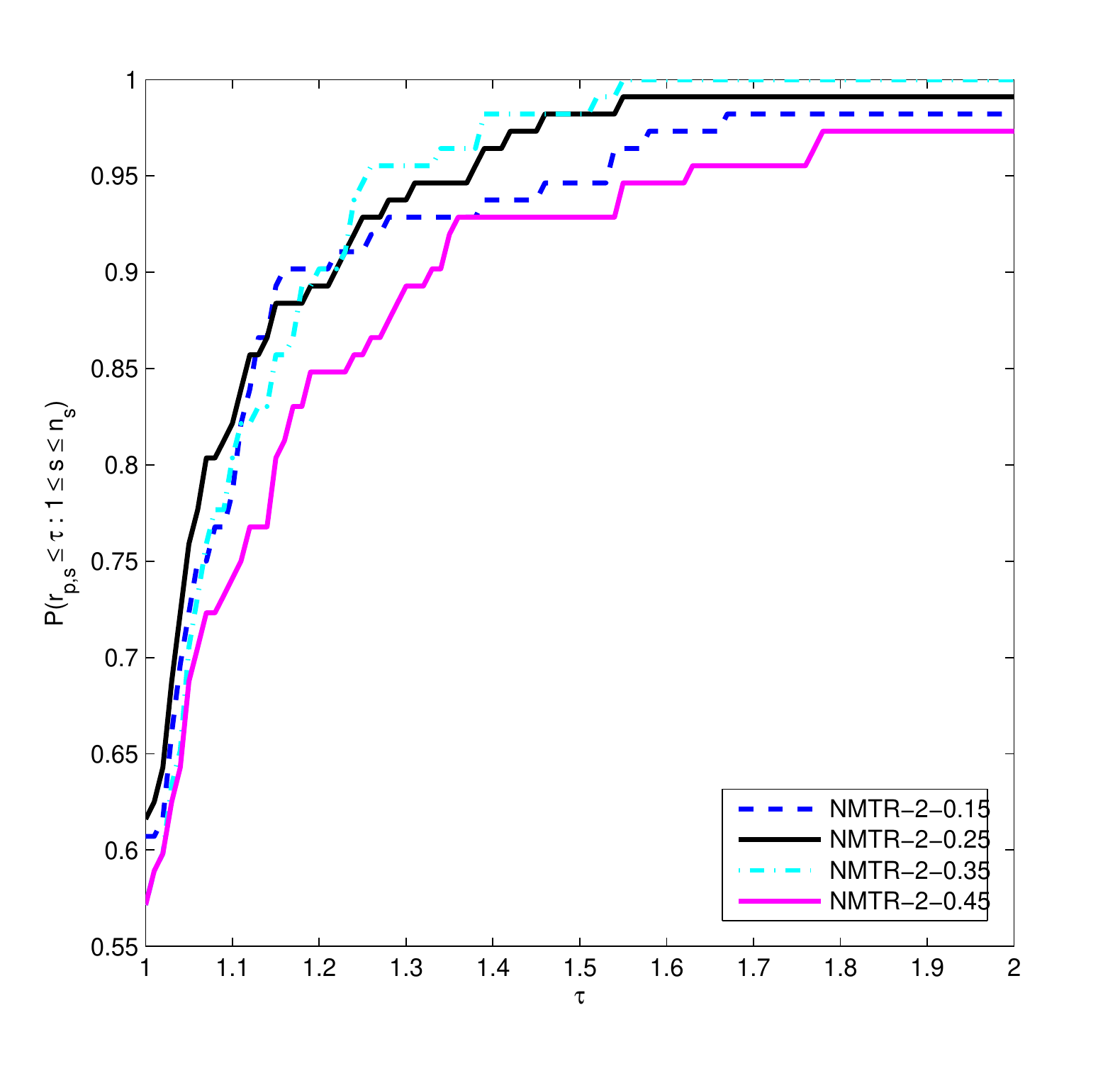}} 
 \qquad 
 \subfloat[][$N_f+3 N_g$ performance profile (NMTR-1)]{\includegraphics[width=7.7cm]{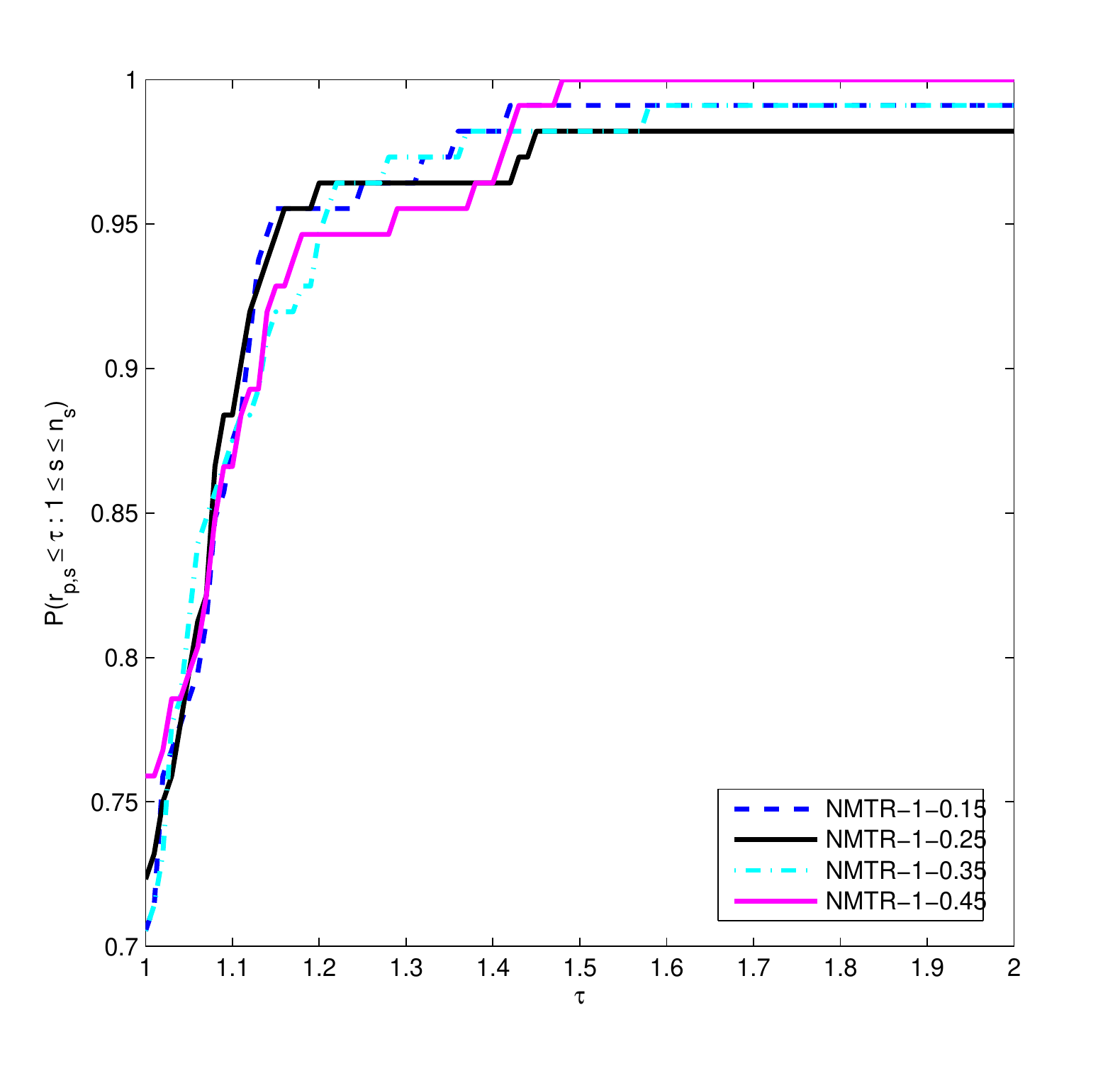}}% 
  \qquad 
  \subfloat[][$N_f+3 N_g$ performance profile (NMTR-2)]{\includegraphics[width=7.7cm]{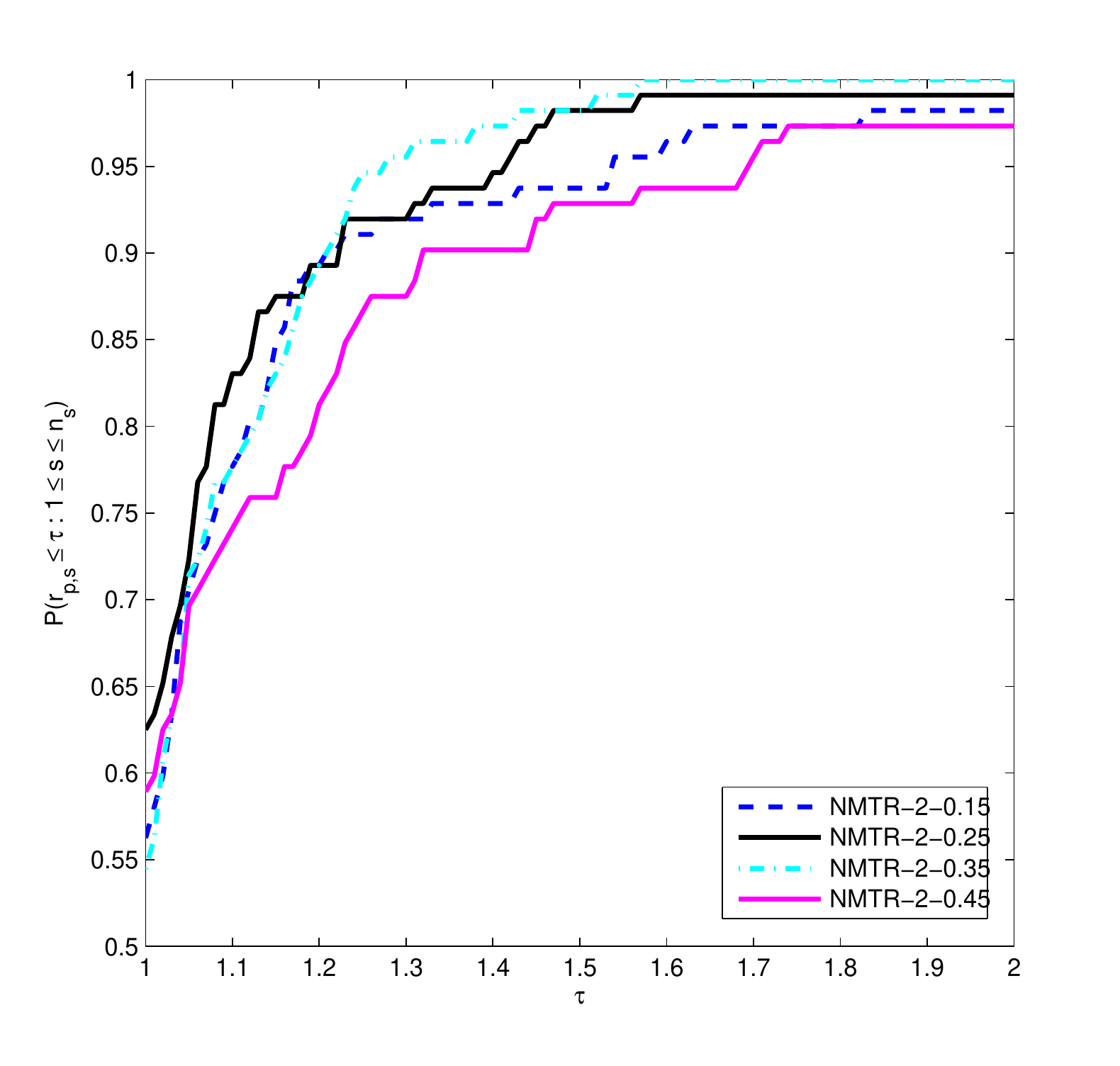}}% 
  \qquad 
 \caption{Performance profiles of NMTR-1 and NMTR-2 with the performance measures $N_g$, $N_f$, and $Nf+ 3 N_g$: Subfigures (a) and (b) display the number of iterations ($N_i$) or gradient evaluations ($N_g$); Subfigures (c) and (d) show the number of function evaluations ($N_f$); Subfigures (e) and (f) display the hybrid measure $N_f+3 N_g$.} 
\label{fpar} 
\end{figure}
%%%%%%%%%%%%

We here test NMTR-G, NMTR-H, NMTR-N, NMTR-M, NMTR-1, and NMTR-2 for solving the unconstrained problem (\ref{e.func}) and compare the produced results. The results of our implementations are summarized in Table \ref{table2}, where $N_g$ and $N_f$ are reported. The results of Table \ref{table2} show that NMTR-1 has a competitive performance compared with NMTR-G, NMTR-H, NMTR-N, NMTR-M, however, NMTR-2 produces the best results. To have a better comparison among these algorithms, we illustrate the results in Figure \ref{fnon} by performance profiles for the measures $N_g$, $N_f$, and $N_f+3 N_g$. 

In Figure \ref{fnon}, Subfigure (a) displays for the number of gradient evaluations, where the best results attained by NMTR-2 and then by NMTR-N with about 63\% and 52\% of the most wins, respectively. NMTR-1 is comparable with NMTR-G, NMTR-H, NMTR-N, but its diagram grows up faster than the others, which means its performance is close to the performance of the best method NMTR-2. Subfigure (b) shows for the number of function evaluations and has a similar interpretation of Subfigure (a), however, NMTR-2 attains about 60\% of the most wins. In Figure \ref{fnon}, Subfigures (c) and (d) display for the mixed measure $N_f+3 N_g$ with $\tau = 1.5$ and $\tau = 5.5$, respectively. In this case NMTR-2 outperforms the others by attaining about 58\% of the most wins, and the others have comparable results, however, the diagrams of NMTR-1 and NMTR-M grow up faster than the others implying that they perform close to the best algorithm NMTR-2. 

%%%%%%%%%%%%%%%%%%%%%%%%%%
\begin{figure} 
\centering 
\subfloat[][$N_i$ and $N_g$ performance profile]{\includegraphics[width=7.7cm]{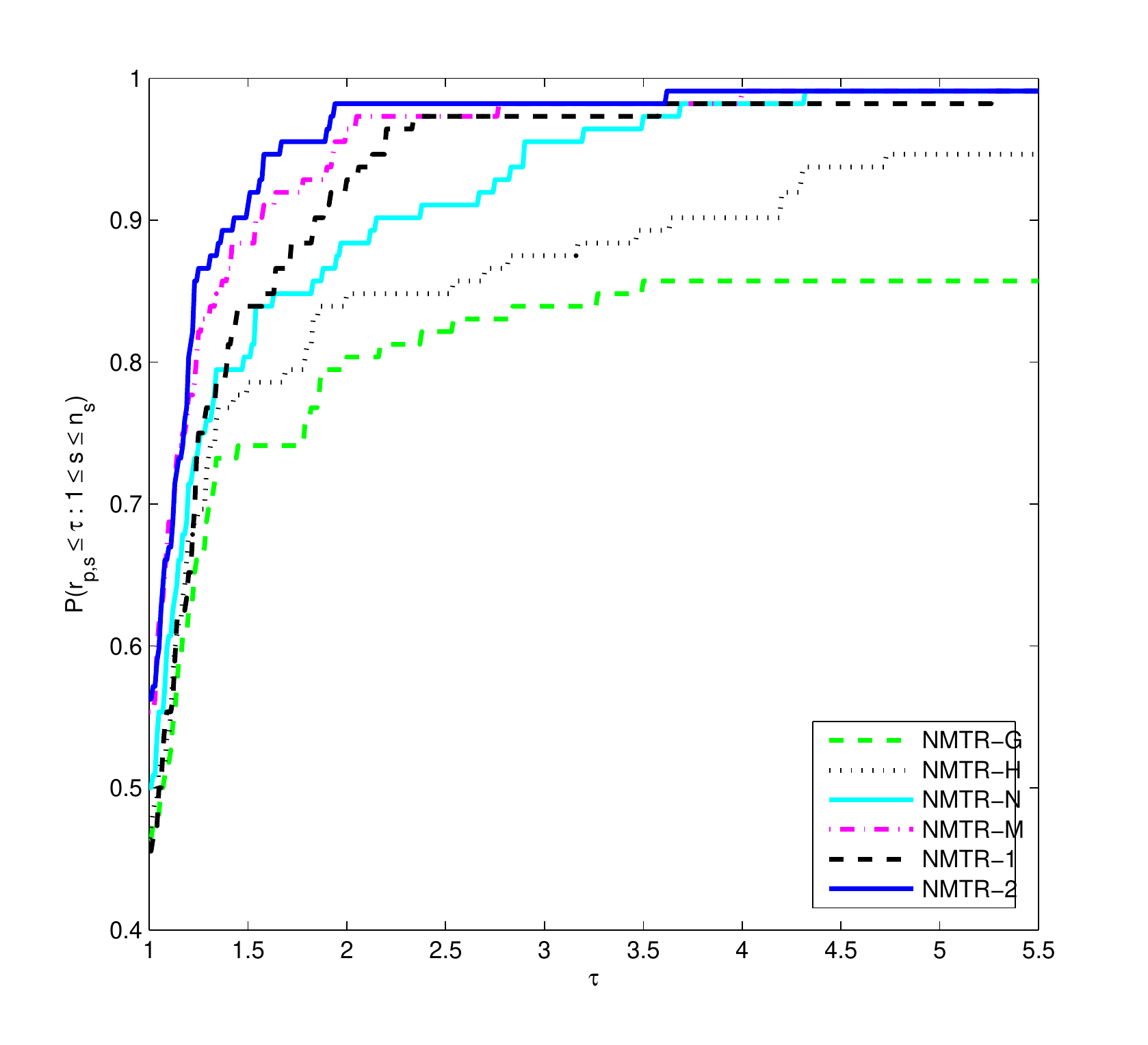}}% 
\qquad 
\subfloat[][$N_f$ performance profile]{\includegraphics[width=7.7cm]{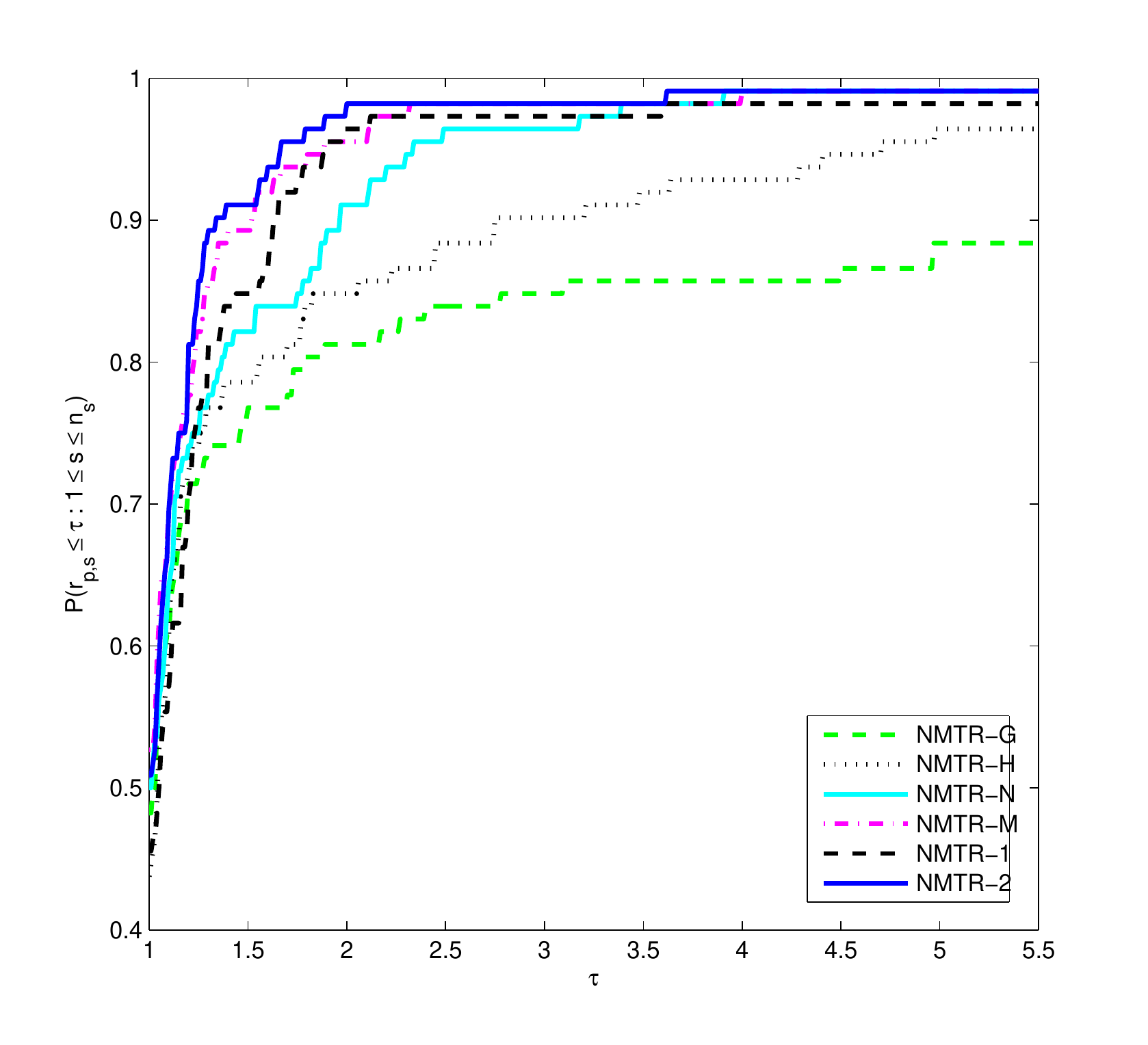}} 
\qquad 
\subfloat[][$N_f  +  3 N_g$ performance profile ($\tau = 1.5$)]{\includegraphics[width=7.7cm]{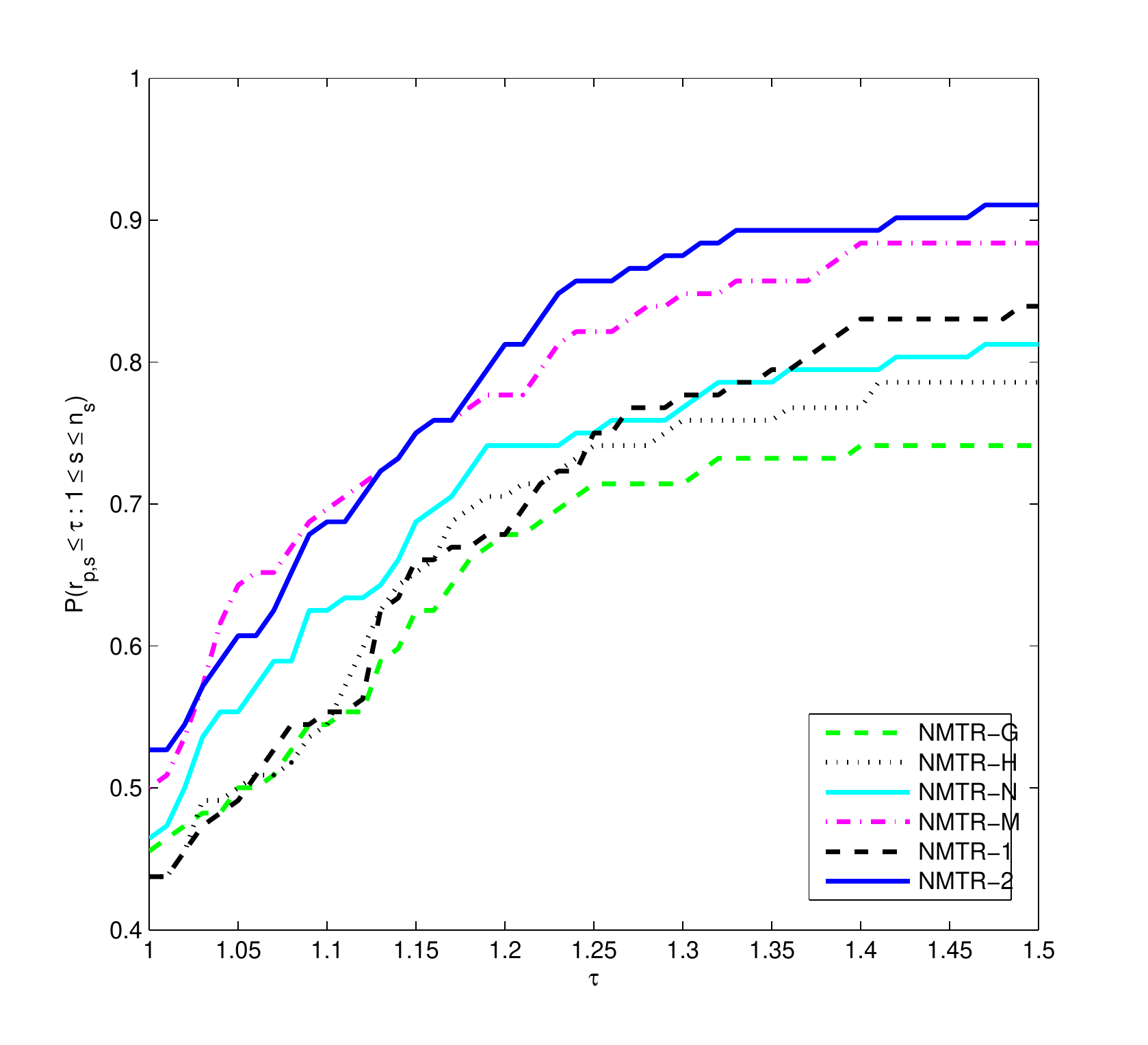}}% 
\qquad 
\subfloat[][$N_f  +  3 N_g$ performance profile ($\tau = 5.5$)]{\includegraphics[width=7.7cm]{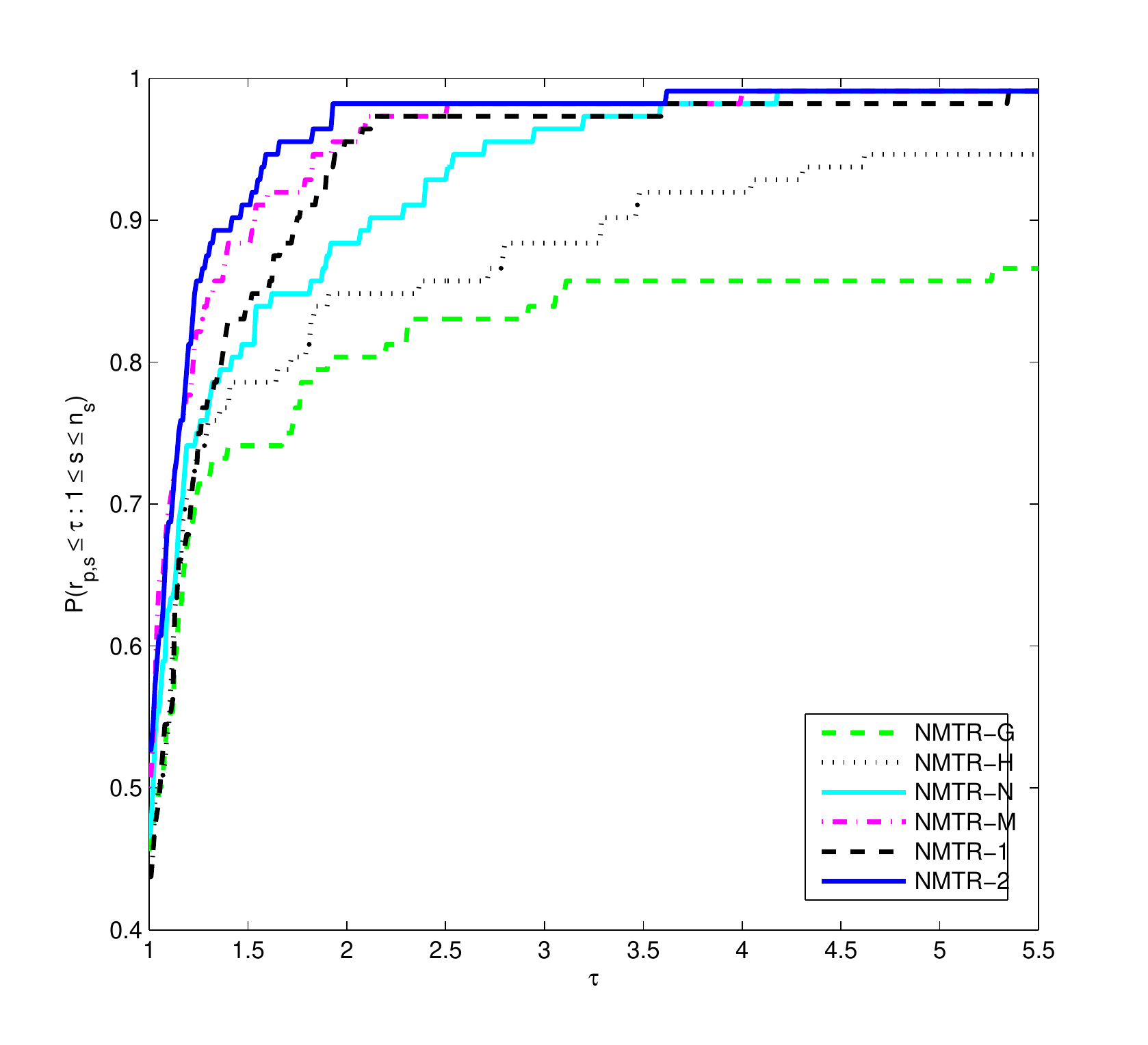}}%   
\caption{A comparison among NMTR-G, NMTR-H, NMTR-N, NMTR-M, NMTR-1, and NMTR-2 by performance profiles using the measures $N_g$, $N_f$, and $N_f+3 N_g$: Subfigure (a) displays the number of iterations ($N_i$) or gradient evaluations ($N_g$); Subfigure (b) shows the number of function evaluations ($N_f$); Subfigures (c) and (d) display the hybrid measure $N_f+3 N_g$ with $\tau = 1.5$ and $\tau = 5.5$, respectively.} 
\label{fnon} 
\end{figure} 
%%%%%%%%%%%%

% ################################################
% ################################################
\section {Concluding remarks}
In this paper we give some motivation for employing nonmonotone strategies in trust-region frameworks. Then we introduce two new nonmonotone terms and combine them into the traditional trust-region framework. It is shown that the proposed methods are golbally convergent to first- and second-order stationary points. Moreover local superlinear and quadratic convergence are established. Applying these methods on some highly nonlinear test problems involving a curved narrow valley show that they have a promising behaviour compared with the monotone trust-region method. Numerical experiments on a set of test problems from the \textsf{CUTEst} test collection show the efficiency of the proposed nonmonotone methods. 

Further research can be done in several aspects. For example, by combining the proposed nonmonotone trust-region methods with various adaptive radius, more efficient trust-region schemes can be derived, see, for example, \cite{AhoA2, AmiA}. The combination of the proposed nonmonotone terms with several inexact line searches such as Armijo, Wolfe, and Goldstein is also interesting, see \cite{AmiA}. The extension of the proposed method for constrained nonlinear optimization could be interesting, especially for nonnegativity constraints and box constraints, see, for example, \cite{BirMR,BirMR2,BonPTZ,PanT,Toi2,UlbU}. It also could be interesting to employ nonmonotone schemes for solving nonlinear least squares and system of nonlinear equations, see \cite{AhoEK} and references therein. Moreover, investigating new adaptive formulas for the parameter $\eta_k$ can be precious to improve the computational efficiency. \\\\
{\bf Appendix.} Table \ref{table2}.

%%%%%%%%%%%%%%%%%%%%%%%%%%%%%%%%%%
\renewcommand{\arraystretch}{1.18} 
\begin{landscape} 
\small 
\begin{longtable}{llllllllllllllll} 
\multicolumn{5}{l} 
{Table 2. Numerical results for nonmonotone trust-region methods}
\label{table2}\\ 
\cmidrule{1-14} 
\multicolumn{1}{l}{Problem name} &\multicolumn{1}{l}{Dimension}
&\multicolumn{1}{l}{NMTR-G}  &\multicolumn{1}{l}{}\hspace{10mm} 
&\multicolumn{1}{l}{NMTR-H} &\multicolumn{1}{l}{}\hspace{10mm} 
&\multicolumn{1}{l}{NMTR-N}  &\multicolumn{1}{l}{} \hspace{10mm} 
&\multicolumn{1}{l}{NMTR-M}  &\multicolumn{1}{l}{}\hspace{10mm} 
&\multicolumn{1}{l}{NMTR-1} &\multicolumn{1}{l}{}\hspace{10mm} 
&\multicolumn{1}{l}{NMTR-2}   &\multicolumn{1}{l}{} \\ 
\cmidrule(lr){3-4} \cmidrule(lr){5-6} \cmidrule(lr){7-8} 
\cmidrule(lr){9-10} \cmidrule(lr){11-12} \cmidrule(lr){13-14} 
\multicolumn{1}{l}{}        &\multicolumn{1}{l}{}
&\multicolumn{1}{l}{$N_g$}   &\multicolumn{1}{l}{$N_f$} %&\multicolumn{1}{l}{$\mathrm{T}$}
&\multicolumn{1}{l}{$N_g$}   &\multicolumn{1}{l}{$N_f$}
&\multicolumn{1}{l}{$N_g$}   &\multicolumn{1}{l}{$N_f$} 
&\multicolumn{1}{l}{$N_g$}   &\multicolumn{1}{l}{$N_f$} %&\multicolumn{1}{l}{$\mathrm{T}$} 
&\multicolumn{1}{l}{$N_g$}   &\multicolumn{1}{l}{$N_f$}% &\multicolumn{1}{l}{$\mathrm{T}$} 
&\multicolumn{1}{l}{$N_g$}   &\multicolumn{1}{l}{$N_f$}\\ %&\multicolumn{1}{l}{$\mathrm{T}$} \\ 
\cmidrule{1-14} 
\endfirsthead 
\multicolumn{5}{l}% 
{Table 2. Numerical results (\textit{continued})}\\[5pt] 
\hline 
\endhead 
\hline 
\endfoot 
\endlastfoot 
AIRCRFTB &8&27&39&26&39&24&34&22&37&26&38&22&37\\ 
ALLINITU &4&18&24&16&22&15&20&14&21&15&20&14&21\\ 
ARGLINA &200&3&4&3&4&3&4&3&4&3&4&3&4\\ 
ARGLINB &200&2&25&2&25&2&25&2&25&2&25&2&25\\ 
ARGLINC &200&2&25&2&25&2&25&2&25&2&25&2&25\\ 
ARWHEAD &5000&3&10&3&10&3&10&3&10&3&10&3&10\\ 
BARD &3&20&23&15&18&15&18&16&20&16&19&16&20\\ 
BDQRTIC &5000&153&223&73&109&33&56&17&30&21&38&17&30\\ 
BEALE &2&12&14&12&14&12&14&12&14&12&14&12&14\\ 
BIGGS3 &6&73&74&75&76&63&66&63&66&67&69&63&66\\ 
BIGGS5 &6&73&74&75&76&63&66&63&66&67&69&63&66\\ 
BIGGS6 &6&49&51&43&44&44&47&45&47&44&46&45&47\\ 
BOX2 &3&8&9&8&9&8&9&8&9&8&9&8&9\\ 
BOX3 &3&8&9&8&9&8&9&8&9&8&9&8&9\\ 
BRKMCC &2&6&8&6&8&6&8&6&8&6&8&6&8\\ 
BROWNAL &200&3&11&3&11&3&11&3&11&3&11&3&11\\ 
BROWNBS &2&31&34&31&33&31&33&32&34&33&35&32&34\\ 
BROWNDEN &4&29&45&24&39&26&43&18&34&23&40&18&34\\ 
BRYBND &5000&314&447&126&194&56&91&58&82&33&56&58&82\\ 
CHAINWOO &4000&628&940&222&317&173&244&47&72&90&142&47&72\\ 
CUBE &2&32&40&36&45&36&47&29&37&28&35&29&37\\ 
DECONVU &63&175&201&132&162&203&294&270&342&226&307&270&342\\ 
DENSCHNA &2&8&9&8&9&8&9&8&9&8&9&8&9\\ 
DENSCHNB &2&8&9&8&9&8&9&8&9&8&9&8&9\\ 
DENSCHNC &2&18&22&18&22&14&19&14&19&17&23&14&19\\ 
DENSCHND &3&8&21&8&21&8&21&6&20&12&25&6&20\\ 
DENSCHNE &3&8&10&8&10&8&10&8&10&9&12&8&10\\ 
DENSCHNF &2&10&14&10&14&9&14&9&14&13&22&9&14\\ 
DIXMAANA &3000&9&10&9&10&9&10&9&10&8&10&9&10\\ 
DIXMAANB &3000&59&77&10&12&10&12&10&12&8&10&10&12\\ 
DIXMAANC &3000&8&10&8&10&8&10&8&10&10&13&8&10\\ 
DIXMAAND &3000&51&63&9&14&9&14&9&14&12&17&9&14\\ 
DIXMAANE &3000&41&42&41&42&41&42&41&42&41&42&41&42\\ 
DIXMAANF &3000&78&86&155&163&36&38&36&38&36&38&36&38\\ 
DIXMAANG &3000&18&20&18&20&18&20&18&20&18&20&18&20\\ 
DIXMAANH &3000&283&303&50&56&50&56&44&50&45&57&44&50\\ 
DIXMAANI &3000&81&82&81&82&81&82&81&82&81&82&81&82\\ 
DIXMAANJ &3000&65&66&65&66&35&37&35&37&35&37&35&37\\ 
DIXMAANK &3000&19&21&19&21&19&21&19&21&19&21&19&21\\ 
DIXMAANL &3000&307&349&111&125&60&67&62&68&32&36&62&68\\ 
DJTL &2&166&266&166&266&169&270&174&274&168&271&174&274\\ 
DQDRTIC &5000&14&19&14&20&14&20&28&44&14&20&28&44\\ 
DQRTIC &5000&94&144&42&71&29&57&11&29&22&48&11&29\\ 
EDENSCH &2000&280&398&113&163&48&72&17&29&31&47&17&29\\ 
EG2 &1000&4&8&4&8&4&8&4&8&4&8&4&8\\ 
EIGENALS &2550&778&988&860&1150&1027&1275&1271&1493&1097&1354&1271&1493\\ 
ENGVAL1 &5000&280&391&129&180&38&53&16&26&34&53&16&26\\ 
ENGVAL2 &3&25&32&25&32&28&36&27&35&26&32&27&35\\ 
ERRINROS &50&249&356&97&151&76&116&36&55&68&96&36&55\\ 
EXPFIT &2&17&20&17&20&14&19&16&23&18&23&16&23\\ 
EXTROSNB &1000&506&746&403&571&443&525&27&45&447&546&27&45\\ 
GENROSE &500&4045&5861&3987&5672&4535&5803&4944&5823&4569&5652&4944&5823\\ 
GROWTHLS &3&18&32&17&31&25&41&24&41&21&36&24&41\\ 
GULF &3&26&29&26&32&26&32&26&32&26&32&26&32\\ 
HAIRY &2&62&75&32&42&25&34&36&45&23&29&36&45\\ 
HATFLDD &3&35&37&35&37&40&45&38&43&38&44&38&43\\ 
HATFLDE &3&9&12&9&12&9&12&9&12&10&14&9&12\\ 
HEART6LS &6&485&532&1366&1462&1551&1687&1938&2127&1740&1914&1938&2127\\ 
HEART8LS &8&273&297&281&312&315&367&287&336&304&348&287&336\\ 
HELIX &3&20&25&27&34&20&27&24&31&28&34&24&31\\ 
HIELOW &3&1&2&1&2&1&2&1&2&1&2&1&2\\ 
HILBERTA &2&6&7&6&7&6&7&6&7&6&7&6&7\\ 
HILBERTB &10&12&13&12&13&10&12&10&12&10&12&10&12\\ 
HIMMELBB &2&1&9&1&9&1&9&1&9&1&9&1&9\\ 
HIMMELBF &4&20&27&18&26&21&29&22&29&18&26&22&29\\ 
HIMMELBG &2&10&12&10&13&10&13&10&13&10&12&10&13\\ 
HIMMELBH &2&7&8&7&8&7&8&7&8&7&8&7&8\\ 
HYDC20LS &99&271&406&485&739&495&722&522&660&466&632&522&660\\ 
JENSMP &2&26&39&18&30&18&30&32&54&23&36&32&54\\ 
KOWOSB &4&31&32&31&32&28&29&28&29&30&32&28&29\\ 
LIARWHD &5000&20&27&20&27&20&27&20&27&20&27&20&27\\ 
LOGHAIRY &2&171&197&185&220&228&281&3608&4361&901&1094&3608&4361\\ 
MANCINO &100&512&693&99&146&42&71&12&32&28&55&12&32\\ 
MARATOSB &2&4&15&4&15&4&15&4&15&4&15&4&15\\ 
MEXHAT &2&12&29&12&29&12&29&12&29&12&29&12&29\\ 
MEYER3 &3&12&32&12&32&12&32&12&32&12&32&12&32\\ 
MSQRTALS &1024&4755&5900&4471&5947&4426&5643&4796&5541&4984&6147&4796&5541\\ 
MSQRTBLS &1024&4682&5870&4327&5723&4135&5244&4645&5404&4706&5755&4645&5404\\ 
NONDQUAR &5000&28&36&27&37&18&27&15&24&21&31&15&24\\ 
OSBORNEA &5&22&31&25&34&23&34&25&35&25&34&25&35\\ 
OSBORNEB &11&66&72&63&72&60&65&61&69&58&67&61&69\\ 
PALMER1C &8&6&20&6&20&6&20&7&23&6&20&7&23\\ 
PALMER1D &7&14&26&13&26&13&27&15&32&11&25&15&32\\ 
PALMER2C &8&5&17&5&17&5&17&5&17&5&17&5&17\\ 
PALMER3C &8&7&18&7&18&7&18&7&18&7&18&7&18\\ 
PALMER4C &8&11&22&11&22&11&22&9&21&11&22&9&21\\ 
PALMER5C &6&22&27&22&29&19&24&17&31&21&28&17&31\\ 
PALMER6C &8&10&19&10&19&10&19&10&20&10&19&10&20\\ 
PALMER7C &8&12&22&12&22&12&22&10&21&12&22&10&21\\ 
PALMER8C &8&13&22&13&22&13&22&17&27&13&22&17&27\\ 
PARKCH &15&1&2&1&2&1&2&1&2&1&2&1&2\\ 
PENALTY1 &1000&13&28&13&28&13&28&13&28&13&28&13&28\\ 
PENALTY2 &200&406&589&149&244&148&218&148&194&143&190&148&194\\ 
PENALTY3 &200&462&673&189&292&79&124&62&91&52&91&62&91\\ 
QUARTC &5000&94&144&42&71&29&57&11&29&22&48&11&29\\ 
ROSENBR &2&37&42&40&46&31&39&30&38&34&44&30&38\\ 
S308 &2&14&17&14&17&15&19&11&14&7&10&11&14\\ 
SENSORS &100&403&522&177&250&82&121&19&31&39&58&19&31\\ 
SINEVAL &2&70&80&78&88&76&87&83&102&84&99&83&102\\ 
SINQUAD &5000&23&31&21&30&20&29&28&39&21&30&28&39\\ 
SISSER &2&13&14&13&14&13&14&13&14&13&14&13&14\\ 
SNAIL &2&117&135&112&131&104&123&99&119&102&119&99&119\\ 
STRATEC &10&104&128&105&130&119&149&147&172&139&173&147&172\\ 
TOINTGOR &50&97&125&93&129&101&141&115&154&116&162&115&154\\ 
TOINTPSP &50&112&138&77&100&73&106&80&114&60&94&80&114\\ 
TOINTQOR &50&81&98&81&107&63&91&32&52&36&58&32&52\\ 
VARDIM &200&12&37&12&37&12&37&12&37&12&37&12&37\\ 
VAREIGVL &50&84&96&76&89&66&88&25&40&44&65&25&40\\ 
VIBRBEAM &8&140&188&84&120&59&87&67&102&76&105&67&102\\ 
WATSON &12&36&42&35&42&37&45&31&40&38&44&31&40\\ 
YFITU &3&55&64&61&73&52&63&66&80&64&77&66&80\\ 
ZANGWIL2 &2&3&4&3&4&3&4&3&4&3&4&3&4\\ 
 \hline
\end{longtable} 
\end{landscape} 
%%%%%%%%%%%%%%%
 
% ################################################
% ################################################ 


\begin{thebibliography}{12}

\bibitem {AhoA1} Ahookhosh, M., Amini, K.: 
An efficient nonmonotone trust-region method for unconstrained optimization. 
Numerical Algorithms, \textbf{59}(4), 523--540 (2012)

\bibitem {AhoA2} Ahookhosh, M., Amini, K.: 
A nonmonotone trust region method with adaptive radius for unconstrained optimization problems,
Computers and Mathematics with Applications, \textbf{60}(3), 411--422 (2010) 

\bibitem {AhoAB} Ahookhosh, M., Amini, K., Bahrami, S.: 
A class of nonmonotone Armijo-type line search method for unconstrained optimization, 
Optimization, \textbf{61}(4), 387--404 (2012)

\bibitem {AhoAP} Ahookhosh, M., Amini, K., Peyghami, M.R.: 
A nonmonotone trust-region line search method for large-scale unconstrained optimization, 
Applied Mathematical Modelling, \textbf{36}(1), 478--487 (2012)

\bibitem {AhoEK} Ahookhosh, M., Esmaeili, H., Kimiaei, M.: An effective trust-region-based approach for symmetric nonlinear systems, 
International Journal of Computer Mathematics, \textbf{90}(3), 671--690 (2013)

\bibitem {AmiA} Amini, K., Ahookhosh, M.:
A hybrid of adjustable trust-region and nonmonotone algorithms for unconstrained optimization,
Applied Mathematical Modelling, \textbf{38}, 2601--2612 (2014)

\bibitem {AmiAN} Amini, K., Ahookhosh, M., Nosratipour, H.:
An inexact line search approach using modified nonmonotone strategy for unconstrained optimization,
Numerical Algorithms, \textbf{66}, 49--78 (2014) 

\bibitem {And} Andrei, N.: An unconstrained optimization test
functions collection, Advanced Modeling and Optimization
\textbf{10}(1), 147--161 (2008)

\bibitem {BirMR} Birgin, E.G., Marti\'nez, J.M., Raydan, M.: 
Nonmonotone spectral projected gradient methods on convex sets, 
SIAM Journal on Optimization, \textbf{10}, 1196--1211 (2000)

\bibitem {BirMR2} Birgin, E.G., Marti\'nez, J.M., Raydan, M.: 
Inexact spectral projected gradient methods on convex sets, 
IMA Journal of Numerical Analysis, \textbf{23}, 539--559 (2003)

\bibitem {BonPTZ} Bonnans J.F., Panier E., Tits A., Zhou J.L.:
Avoiding the Maratos effect by means of a nonmonotone line search, II: Inequality constrained problems -- feasible iterates,
SIAM Journal on Numerical Analysis, \textbf{29}, 1187--1202 (1992)

\bibitem {ChaPLP} Chamberlain, R.M., Powell, M.J.D., Lemarechal, C., Pedersen, H.C.: The watchdog technique for forcing
convergence in algorithm for constrained optimization,
Mathematical Programming Study \textbf{16}, 1--17 (1982)

\bibitem {ConGT} Conn, A.R., Gould, N.I.M., Toint, Ph.L.: Trust-Region Methods, society for Industrial and Applied Mathematics, SIAM, Philadelphia, (2000)

\bibitem {Dav} Davidon, W.C.: 
Conic approximation and collinear scaling for optimizers, 
SIAM Journal on Numerical Analysis, \textbf{17}, 268--281 (1980)

\bibitem {DenXZ} Deng, N.Y., Xiao, Y., Zhou, F.J.: Nonmonotonic trust region algorithm, Journal of Optimization Theory and
Applications \textbf{76}, 259--285 (1993)

\bibitem {DenLT} Dennis, J.E., S.B. Li, and R.A. Tapia, 
A Unified Approach to global convergence of trust region
methods for nonsmooth optimization,
Mathematical Programming, \textbf{68}, 319--346 (1995)

\bibitem {DiS} Di, S., Sun, W.: 
Trust region method for conic model to solve unconstrained optimization problems,
Optimization Methods and Software, \textbf{6}, 237--263 (1996)

\bibitem {DolM} Dolan, E., Mor\'{e}, J.J.: Benchmarking optimization software with performance profiles,
Mathemathical Programming \textbf{91}, 201--213 (2002)

\bibitem {ErwG} Erway, J.B., Gill, P.E.: 
A subspace minimization method for the trust-region step, 
SIAM Journal on Optimization, \textbf{20}(3), 1439--1461 (2009)

\bibitem {ErwGG} Erway, J.B., Gill, P.E., Griffin, J.D.: 
Iterative methods for finding a trust-region step, SIAM Journal on Optimization, \textbf{20}(2), 1110--1131 (2009)

\bibitem {Fle} Fletcher, R.:
Practical methods of optimization,
(2nd ed.), New York, (1987)

\bibitem {FerKS} Ferreira P.S., Karas, E.W., Sachine, M.: 
A globally convergent trust-region algorithm for unconstrained derivative-free optimization,
Computational and Applied Mathematics, DOI 10.1007/s40314-014-0167-2, (2014)

\bibitem {GraYY} Grapiglia, G.N., Yuan, J., Yuan, Y.: 
A derivative-free trust-region algorithm for composite
nonsmooth optimization,
Computational and Applied Mathematics, DOI 10.1007/s40314-014-0201-4, (2014)

\bibitem {GolOT} Gould, N.I.M, Orban, D., Toint, Ph.L.: 
CUTEst: a Constrained and Unconstrained Testing Environment with safe threads for mathematical optimization,
Computational Optimization and Applications, DOI 10.1007/s10589-014-9687-3, (2014) 

\bibitem {GolLRT} Gould, N.I.M., Lucidi, S., Roma, M., Toint, Ph.L.: 
Solving the trust-region subproblem using the
Lanczos method,
SIAM Journal on Optimization, \textbf{9}(2), 504--525 (1999)

\bibitem {GolOST} Gould, N.I.M, Orban, D., Sartenaer, A., Toint, Ph.L.: 
Sensitivity of trust-region algorithms to their parameters, A Quarterly Journal of Operations Research \textbf{3},
227--241 (2005)

\bibitem {GriLL1} Grippo, L., Lampariello, F., Lucidi, S.: A nonmonotone line search technique for Newton's method, SIAM Journal
on Numerical Analysis \textbf{23}, 707--716 (1986)

\bibitem {GurO} G\"urb\"uzbalaban, M., Overton, M.L.: 
On Nesterov's nonsmooth Chebyshev-Rosenbrock functions, 
Nonlinear Analysis: Theory, Methods \& Applications, {\bf 75}, 1282--1289 (2012)

\bibitem {HalT} Hallabi, M.E., Tapia, R.: 
A global convergence theory for arbitraty norm trust region methods for nonlinear equations. Mathematical Sciences Technical Report, TR 87--25, Rice University, Houston, TX (1987)

\bibitem {MoLY} Mo, J., Liu, C., Yan, S.: 
A nonmonotone trust region method based on nonincreasing technique of weighted average
of the succesive function value, 
Journal of Computational and Applied Mathematics \textbf{209}, 97--108 (2007)

\bibitem {MorS} Mor\'{e}, J.J., Sorensen, D.C.: Computing a trust region step,
SIAM Journal on Scientific and Statistical Computing, \textbf{4}(3), 553--572 (1983)

\bibitem {NocW} Nocedal, J., Wright, S.J.: 
Numerical Optimization, 
Springer, NewYork, (2006)

\bibitem {PanT} Panier, E.R., Tits, A.L.: 
Avoiding the Maratos effect by means of a nonmonotone linesearch, 
SIAM Journal on Numerical Analysis,
\textbf{28}, 1183--1195 (1991)

\bibitem {RojS} Rojas, M., Sorensen, D.C.: 
A trust-region approach to the regularization of large-scale discrete forms of ill-posed problems,
SIAM Journal on Scientific Computing, \textbf{26}
1842--1860 (2002)

\bibitem {SchSB} Schultz, G.A., Schnabel, R.B., Byrd, R.H.: A family of trust-region-based algorithms for unconstrained minimization with strong global convergence, 
SIAM Journal on Numerical Analysis, \textbf{22}, 47--67 (1985)

\bibitem {Sor1} Sorensen, D.C.: 
The $q$-superlinear convergence of a collinear scaling algorithm for unconstrained optimization, 
SIAM Journal of Numerical Analysis, \textbf{17}, 84--114 (1980)

\bibitem {Sor2} Sorensen, D.C.:
Newton's method with a model trust region modification,
SIAM Journal of Numerical Analysis, \textbf{19}(2), 409--426 (1982)

\bibitem {Ste} Steihaug, T.: 
The conjugate gradient method and trust regions in large scale optimization,
SIAM Journal on Numerical Analysis, \textbf{20}(3), 626--637 (1983)

\bibitem {Sun} Sun, W.: 
On nonquadratic model optimization methods, 
Asia and Pacific Journal of Operations Research, \textbf{13}, 43--63 (1996)

\bibitem {Toi1} Toint, Ph.L.: 
An assessment of nonmonotone linesearch technique for unconstrained optimization, 
SIAM Journal on Scientific Computing, \textbf{17}, 725--739 (1996)

\bibitem {Toi2} Toint, Ph.L., Non-monotone trust-region algorithm for nonlinear optimization subject to convex constraints, 
Mathematical Programming, \textbf{77},
69--94 (1997)

\bibitem {UlbU} Ulbrich, M., Ulbrich, S.: 
Nonmonotone trust region methods for nonlinear equality constrained optimization without a penalty function, 
Mathematical Programming, \textbf{95}, 103--135 (2003)

\bibitem {XiaZ} Xiao, Y., Zhou, F.J.: 
Nonmonotone trust region methods with
curvilinear path in unconstrained optimization, Computing, \textbf{48}, 303--317 (1992)

\bibitem {XiaC} Xiao, Y., Chu, E.K.W.: 
Nonmonotone trust region methods, 
Technical Report 95/17, Monash University, Clayton, Australia, (1995)

\bibitem {Yua} Yuan, Y.:
Conditions for convergence of trust region algorithms for nonsmooth optimization,
Mathematical Programming, \textbf{31}, 220--228 (1985)

\bibitem {ZhaH} Zhang, H.C., Hager, W.W.: 
A nonmonotone line search technique for unconstrained optimization, 
SIAM journal on Optimization \textbf{14}(4), 1043--1056 (2004)

\bibitem {ZhoX} Zhou, F., Xiao, Y.: 
A class of nonmonotone stabilization trust
region methods, 
Computing, \textbf{53}(2), 119--136 (1994)

\end{thebibliography}
\end{document}